\newcolumntype{L}{>{\raggedright\arraybackslash}X}
\DeclareMathAlphabet\mathbb{U}{msb}{m}{n}
\numberwithin{equation}{section}
\newtheorem{theorem}{Theorem}[section]
\newtheorem{cor}[theorem]{Corollary}
\newtheorem{lemma}[theorem]{Lemma}
\newtheorem{remark}[theorem]{Remark}
\newtheorem{obs}[theorem]{Observation}
\newtheorem{example}[theorem]{Example}
\newtheorem{defin}[theorem]{Definition}
\newtheorem{ass}[theorem]{Assumption}
\newcommand{\cD}{\mathcal{D}}
\newcommand{\cM}{\mathcal{M}}
\newcommand{\cN}{\mathcal{N}}
\newcommand{\cP}{\mathcal{P}}
\newcommand{\cX}{\mathcal{X}}
\newcommand{\R}{\mathbb{R}}
\newcommand{\N}{\mathbb{N}}
\newcommand*{\E}{\mathbb{E}}
\providecommand{\tr}{\operatorname{tr}}
\newcommand*{\defeq}{\coloneqq}
\renewcommand{\le}{\leqslant}
\renewcommand{\ge}{\geqslant}
\renewcommand{\leq}{\leqslant}
\renewcommand{\geq}{\geqslant}
\providecommand{\abs}[1]{\lvert{#1}\rvert}
\providecommand{\norm}[1]{\lVert{#1}\rVert}
\newcommand{\sig}{\sigma}
\newcommand{\Hell}{\mathsf{D}}
\newcommand{\KL}{\mathsf{KL}}
\newcommand{\Ren}{\mathsf{R}}
\newcommand{\opnorm}{\@ifstar\@opnorms\@opnorm}
\newcommand{\@opnorms}[1]{%
	\left|\mkern-1.5mu\left|\mkern-1.5mu\left|
	#1
	\right|\mkern-1.5mu\right|\mkern-1.5mu\right|
}
\newcommand{\@opnorm}[2][]{%
	\mathopen{#1|\mkern-1.5mu#1|\mkern-1.5mu#1|}
	#2
	\mathclose{#1|\mkern-1.5mu#1|\mkern-1.5mu#1|}
}
\newcommand{\PreserveBackslash}[1]{\let\temp=\\#1\let\\=\temp}
\newcolumntype{C}[1]{>{\PreserveBackslash\centering}p{#1}}
\newcommand\bs[1]{\boldsymbol{#1}}
\newcommand\mc[1]{\mathcal{#1}}
\newcommand\ms[1]{\mathscr{#1}}
\newcommand\msf[1]{\mathsf{#1}}
\definecolor{MITBrown}{RGB}{164, 31, 50}
\DeclareMathOperator\Ric{\msf{Ric}}
\DeclareMathOperator\cov{cov}
\DeclareMathOperator\divergence{div}
\DeclareMathOperator\id{id}
\DeclareMathOperator\law{law}
\DeclareMathOperator\var{var}
\renewcommand{\Pr}{\mathbb{P}}
\newcommand{\D}{\mathrm{d}}
\newcommand{\Coup}{\ms C}
\newcommand\deq{\coloneqq}
\newcommand\mmid{\mathbin{\|}}
\newcommand\T{\mathsf{T}}
\newcommand\bigsp{\vphantom{\big|}}
\newcommand\Bigsp{\vphantom{\Big|}}
\newcommand{\essinf}[1]{\mathop{#1\text{-}\mathrm{ess\,inf}}}
\newcommand{\esssup}[1]{\mathop{#1\text{-}\mathrm{ess\,sup}}}
\newcommand{\CD}{\msf{CD}}
\newcommand{\MEKL}{\msf{MY}_{\KL(\cdot \mmid \pi)}}
\newcommand{\PathA}{\bs\mu}
\newcommand{\PathAux}{\bs\mu'}
\def\blfootnote{\gdef\@thefnmark{}\@footnotetext}
\begin{document}

	\title{Shifted Composition I: Harnack and Reverse Transport Inequalities}

 	\author{
		Jason M. Altschuler\\
		UPenn\\
		\texttt{alts@upenn.edu}
		\and
		Sinho Chewi \\
		IAS \\
		\texttt{schewi@ias.edu}
	}
	\date{}
	\maketitle

	\begin{abstract}
            We formulate a new information-theoretic principle---the \emph{shifted composition rule}---which bounds the divergence (e.g., Kullback--Leibler or R\'enyi) between the laws of two stochastic processes via the introduction of auxiliary shifts.
            In this paper, we apply this principle to prove reverse transport inequalities for diffusions which, by duality, imply F.-Y.\ Wang's celebrated dimension-free Harnack inequalities.
            Our approach bridges continuous-time coupling methods from geometric analysis with the discrete-time shifted divergence technique from differential privacy and sampling.
            It also naturally gives rise to (1) an alternative continuous-time coupling method based on optimal transport, which bypasses Girsanov transformations, (2) functional inequalities for discrete-time processes, and (3) a family of ``reverse'' Harnack inequalities.
	\end{abstract}

	\setcounter{tocdepth}{2}
	\tableofcontents	
	\normalsize


\section{Introduction}\label{sec:intro}

In this paper, we formulate a new technique for bounding information-theoretic divergences, such as the Kullback--Leibler (KL) or R\'enyi divergence, between two probability laws.
In the case of the KL divergence, it extends the classical \emph{chain rule}
\begin{align}\label{eq:kl_chain_rule}
    \KL(\bs\mu^Y \mmid \bs \nu^Y) \le \KL(\bs \mu^{X,Y} \mmid \bs \nu^{X,Y})
    &= \KL(\bs \mu^X \mmid \bs \nu^X) + \int \KL(\bs \mu^{Y\mid X=x} \mmid \bs \nu^{Y\mid X=x}) \, \bs \mu^X(\D x)\,.
\end{align}
Here $X$ and $Y$ are jointly defined random variables on a suitable probability space $\Omega$, $\bs\mu$ and $\bs\nu$ are two probability measures over $\Omega$, and we use the obvious notation (e.g., $\bs\mu^{X,Y}$ denotes the joint law of $(X,Y)$, $\bs \mu^X$ denotes the marginal law of $X$, and $\bs \mu^{Y\mid X=x}$ denotes the conditional law of $Y$ given $X=x$, all under the measure $\bs\mu$).
The first inequality in~\eqref{eq:kl_chain_rule} follows from the data-processing inequality (see Theorem~\ref{thm:renyi_prop}).

Our technique is based on a simple yet crucial modification of~\eqref{eq:kl_chain_rule}. For any third random variable $X'$, jointly defined with $X$ and $Y$ on $\Omega$,
we prove that
\begin{align}\label{eq:shifted_chain_rule}
    \KL(\bs\mu^Y \mmid \bs\nu^Y)
    &\le \KL(\bs\mu^{X',Y} \mmid \bs \nu^{X,Y})
    \le \KL(\bs\mu^{X'} \mmid \bs \nu^X) + \int \KL(\bs \mu^{Y\mid X=x} \mmid \bs \nu^{Y\mid X=x'})\,\gamma(\D x, \D x')\,,
\end{align}
where $\gamma$ is any coupling of $\bs \mu^X$ and $\bs \mu^{X'}$.
Clearly,~\eqref{eq:shifted_chain_rule} contains~\eqref{eq:kl_chain_rule} as a special case (take $X = X'$), but the additional flexibility of introducing the auxiliary random variable $X'$ turns~\eqref{eq:shifted_chain_rule} into a powerful tool applicable to many situations where~\eqref{eq:kl_chain_rule} alone would not suffice.
Briefly, we modify the ``history'' of the process from $X\to Y$ to $X' \to Y$, at a price encapsulated in the second term on the right-hand side of~\eqref{eq:shifted_chain_rule}.
We refer to~\eqref{eq:shifted_chain_rule} (and its generalization to other divergences) as the \emph{shifted composition rule}. See Theorem~\ref{thm:scr} for the formal statement.

This series of papers investigates the shifted composition rule and its applications. In this first work, we focus on the application of this principle to deriving sharp \emph{Harnack inequalities} and \emph{reverse transport inequalities}.
To describe these results, we first provide some context.

To fix ideas, let $V : \R^d\to\R$ be a smooth function and consider the Langevin diffusion with potential $V$, namely, the solution to the It\^o stochastic differential equation (SDE)
\begin{align}\label{eq:langevin}
    \D X_t = -\nabla V(X_t) \, \D t + \sqrt 2 \, \D B_t\,,
\end{align}
where ${(B_t)}_{t\ge 0}$ is a standard Brownian motion on $\R^d$.
To study diffusion processes such as~\eqref{eq:langevin}, one usually introduces the corresponding Markov semigroup ${(P_t)}_{t\ge 0}$, which maps any (bounded) function $f : \R^d\to\R$ to $P_t f$ defined by $P_t f(x) \deq \E[f(X_t) \mid X_0 = x]$.
The analytic properties of the diffusion~\eqref{eq:langevin} (e.g., its regularizing effect) are then encoded as inequalities for the semigroup.
We refer to the monograph~\cite{bakry2014analysis} for a comprehensive account.

We will be particularly interested in the dimension-free Harnack inequality introduced in~\cite{Wang1997LSINoncompact}. In the context of~\eqref{eq:langevin}, this result reads as follows: suppose that $\nabla^2 V \succeq \alpha I$ on $\R^d$, for some $\alpha\in\R$; then, for any bounded non-negative function $f :\R^d\to\R$, and any $p > 1$,
\begin{align}\label{eq:p_harnack}
    {(P_t f(x))}^p
    &\le P_t(f^p)(y) \exp\biggl( \frac{\alpha p\,\norm{x-y}^2}{2\,(p-1)\,(\exp(2\alpha t)-1)}\biggr)\,, \qquad \forall\,x,y\in\R^d,\; t >0\,.
\end{align}
By replacing the Euclidean metric with an intrinsic metric,~\eqref{eq:p_harnack} holds more generally for Markov diffusions on Riemannian manifolds which satisfy the \emph{curvature-dimension condition} $\CD(\alpha,\infty)$, which reduces to $\nabla^2 V \succeq \alpha I$ for~\eqref{eq:langevin}.
In fact, as observed in~\cite{Wang10HarnackBoundary},~\eqref{eq:p_harnack} is \emph{equivalent} to $\CD(\alpha,\infty)$, and moreover to the reverse transport inequality
\begin{align}\label{eq:rev_transport}
    \Ren_q(\delta_x P_t \mmid \delta_y P_t)
    &\le \frac{\alpha q\,\norm{x-y}^2}{2\,(\exp(2\alpha t)-1)}\,, \qquad \forall\, x,y\in\R^d,\; t > 0\,,
\end{align}
where $q \deq \frac{p}{p-1}$ is the H\"older conjugate to $p$ and $\Ren_q$ is the R\'enyi divergence of order $q$ (see \S\ref{sec:prelim}).
We defer a more thorough discussion of the literature, including these equivalences, to \S\ref{ssec:apps:background}.

In~\cite{Wang1997LSINoncompact}, the Harnack inequality~\eqref{eq:p_harnack} was established using semigroup calculations based on the $\CD(\alpha,\infty)$ condition (or more precisely, based on certain gradient commutation bounds which are equivalent to $\CD(\alpha,\infty)$).
Then, in~\cite{ArnThaWan06HarnackCurvUnbdd}, M.\ Arnaudon, A.\ Thalmaier, and F.-Y.\ Wang introduced a coupling argument, which together with the Girsanov transformation, provides an alternative means of establishing inequalities such as~\eqref{eq:p_harnack}.
The latter approach has been used to systematically study SDEs on Riemannian manifolds, SDEs with multiplicative noise, SDEs with irregular coefficients, distribution-dependent SDEs, SPDEs, jump processes, and SDEs driven by fractional Brownian motion; we give citations to this extensive literature and revisit the coupling approach in \S\ref{sec:cont:sync}.

The formulation~\eqref{eq:rev_transport}, however, is formulated purely in terms of information-theoretic quantities, which naturally raises the question of obtaining a proof of~\eqref{eq:rev_transport} by means of an information-theoretic principle.
This is the starting point which motivates the present work.
Indeed, as we show in \S\ref{sec:disc}, the shifted composition rule can be used to recover~\eqref{eq:p_harnack} and~\eqref{eq:rev_transport} via elementary discrete-time arguments.
Moreover, through the information-theoretic lens, we unify, clarify, and refine concepts from distinct fields (namely, the shifted divergence technique from differential privacy~\cite{pabi} and the coupling argument of~\cite{ArnThaWan06HarnackCurvUnbdd}) and obtain new Harnack inequalities.
We now summarize the main contributions of our work.

\paragraph*{Contributions and organization.}
In \S\ref{sec:prelim}, we begin by reviewing the information-theoretic concepts that we employ, as well as their key properties.

In \S\ref{sec:disc}, we develop the discrete-time arguments which form the core technical innovation of our work.
We begin in \S\ref{ssec:disc:scp} by formally stating and proving the shifted composition rule (Theorem~\ref{thm:scr}).
Then, in \S\ref{ssec:disc:reduction}, we apply the shifted composition rule to prove sharp R\'enyi reverse transport inequalities (a.k.a.\ R\'enyi regularity bounds), of the form \begin{align}
    \Ren_q(\delta_x P^N \mmid \delta_y P^N) \le C\,\norm{x-y}^2
    \label{eq-intro:reg-dirac}
\end{align}
for discrete-time Markov kernels $P$ on $\R^d$ under the following two assumptions: (1) $P$ satisfies a \emph{one-step} regularity bound $\Ren_q(\delta_x P \mmid \delta_y P) \le c\,\norm{x-y}^2$, and (2) $P$ is Lipschitz in the $W_\infty$ metric.
Note that for our applications of interest, in which $P$ is taken to be a time-discretization of an SDE\@, the one-step regularity bound is typically easy to check since $P$ admits an explicit, Gaussian transition density.
For our result, given as Theorem~\ref{thm:discrete-main}, it is critical that we prove \emph{sharp} bounds in order to obtain non-trivial results for the continuous-time diffusion as we let the discretization time step tend to zero, as well as to recover the aforementioned sharp equivalences with the $\CD(\alpha,\infty)$ condition (see Remark~\ref{rem:multi-step}).

Our sharp regularity bounds for discrete-time Markov processes are obtained through the introduction of auxiliary ``\emph{shifted}'' processes and appealing to the shifted composition rule.
In fact, we identify two separate auxiliary processes which suffice for this purpose: one based on \emph{synchronous} coupling, and one based on coupling via \emph{optimal mass transport}.
In turn, they lead to two different continuous-time arguments based, respectively, on stochastic calculus and optimal transport.

Next, in \S\ref{ssec:disc:convexity}, we show that regularity bounds~\eqref{eq-intro:reg-dirac} which hold for Dirac initializations $\delta_x$, $\delta_y$ can be upgraded, in a black-box manner, to regularity bounds of the form
\begin{align*}
    \Ren_q(\mu P^N \mmid \nu P^N) \le C\, W^2(\mu,\nu)
\end{align*}
that hold from arbitrary initializations $\mu$, $\nu$, and replace the the quadratic cost $\norm{x-y}^2$ on the right-hand side of the regularity bound by a suitable \emph{coupling cost} $W^2(\mu,\nu)$ between $\mu$ and $\nu$ (Theorem~\ref{thm:renyi_cvxty_principle}).
Although the argument is straightforward, based on the joint convexity of information divergences and a coupling argument, we have not found a self-contained statement of this principle in the literature.
Taken together with the result of \S\ref{ssec:disc:reduction}, this yields a general reduction in which, for $W_\infty$-Lipschitz kernels $P$, sharp multi-step regularity bounds from general initializations follow from one-step regularity bounds from Dirac initializations.

In \S\ref{ssec:disc:langevin} we illustrate our results for the Langevin SDE~\eqref{eq:langevin}. In this context, our arguments closely resemble the \emph{shifted divergence} technique from the field of differential privacy~\cite{pabi}, and in particular the modified version~\cite{AltTal22dp} which was recently used to established discrete mixing bounds~\cite{AltChe23warm, AltTal23Langevin}.
We discuss these connections further in \S\ref{ssec:discrete:dp}.
In brief, our framework can be viewed as a generalization, refinement, and interpretation of the shifted divergence method.
It is a generalization since we have identified the fundamental information-theoretic principle---namely, the shifted composition rule---underlying the method, which allows for more general notions of shifts than Gaussian convolutions; it is a refinement due to the convexity principle of \S\ref{ssec:disc:convexity}, which improves both quantitatively and qualitatively over prior results in the literature (e.g.,~\cite{AltChe23warm,AltTal23Langevin}); and it provides meaningful interpretations through the construction of \emph{explicit} shifted processes, as well as by connecting it to the corresponding continuous-time arguments in \S\ref{sec:cont}.
The freedom to choose more general ``shifts'' will be exploited in further works in this series.

Next,~\S\ref{sec:cont} develops the corresponding arguments in continuous time; in particular, the two choices for the auxiliary process lead to conceptually distinct proofs.
In \S\ref{sec:cont:sync} we show that the continuous-time analogue of the synchronous coupling proof coincides with the aforementioned ``coupling by parallel translation'' introduced by~\cite{ArnThaWan06HarnackCurvUnbdd}. Hence the arguments of \S\ref{sec:disc} can be viewed as a way to extend the method of~\cite{ArnThaWan06HarnackCurvUnbdd}, based on Girsanov's Theorem, to discrete-time Markov processes. On the other hand, the continuous-time analogue of the Wasserstein coupling proof, given in \S\ref{sec:cont:opt}, relies on calculations in the spirit of Otto calculus~\cite{otto01porousmedium} (c.f.~\cite{jordan1998variational, AGS, villani2009optimal}) and appears to be new. This argument bypasses the need for Girsanov transformations.
We also discuss links with the F\"ollmer process~\cite{Fol1985Reversal} and the ``JKO'' (or minimizing movements) scheme, which may be conceptually useful.

In \S\ref{sec:ext}, we explore further extensions of our results, starting with a word on the Riemannian setting in \S\ref{ssec:ext:riem} and proceeding to general It\^o SDEs on $\R^d$ with multiplicative noise in \S\ref{ssec:ext:ito}.
The latter setting was first considered by F.-Y.\ Wang in~\cite{Wang11HarnackMultNoise}, and we show how to recover his sharp log-Harnack inequality via discrete-time arguments.
Then, in \S\ref{ssec:ext:clt}, we consider the Markov kernel induced by $N$-fold convolution with a regular density $\rho$ under the central limit scaling and we obtain a regularity bound depending on the Fisher information matrix for $\rho$.
We conjecture that the Fisher information matrix can be replaced by the inverse covariance matrix, which would be sharp.
We stress that continuous-time coupling arguments do not apply to the study of these discrete-time processes.

Finally, in \S\ref{sec:apps}, we discuss applications of our results to the study of Harnack inequalities.
We start with background in \S\ref{ssec:apps:background} on the equivalence between $\CD(\alpha,\infty)$, Harnack inequalities, and reverse transport inequalities; in particular, we emphasize the duality between the latter two in \S\ref{ssec:apps:duality}.
Hence, our regularity bounds/reverse transport inequalities immediately furnish Harnack inequalities, in particular for discrete-time processes (see \S\ref{ssec:apps:discrete}).

In \S\ref{ssec:apps:reverse}, we show that dualizing our regularity bounds for R\'enyi parameters $q \in (0,1)$ yields a family of \emph{reverse} Harnack inequalities which correspond to exponents $p \in (-\infty, 0)$.
These inequalities have not previously appeared in the literature, and we prove that they are also equivalent to the curvature-dimension condition $\CD(\alpha,\infty)$.

For brevity, we defer some calculations and proofs to the appendices \S\ref{app:deferred} and \S\ref{app:dual}.
    \section{Information-theoretic preliminaries}\label{sec:prelim}

Here we briefly recall the definitions and basic properties of the information divergences employed in this paper.

\begin{defin}[R\'enyi divergence]
    Let $q \in (0,\infty]$. The \emph{R\'enyi divergence} of order $q$ between probability measures $\mu$, $\nu$ is defined to be
    \begin{align}\label{eq:renyi}
        \Ren_q(\mu \mmid \nu)
        &\deq \frac{1}{q-1} \log \int \bigl(\frac{\D\mu}{\D\nu}\bigr)^q \, \D \nu\,.
    \end{align}
    For $q = 1$, this is known as the \emph{Kullback{--}Leibler (KL) divergence} and we interpret~\eqref{eq:renyi} in the limiting sense,
    \begin{align*}
        \KL(\mu \mmid \nu)
        \deq \Ren_1(\mu \mmid \nu)
        \deq \int \Bigl(\frac{\D \mu}{\D\nu} \log \frac{\D\mu}{\D\nu}\Bigr) \, \D \nu\,.
    \end{align*}
    For $q = \infty$, we again interpret~\eqref{eq:renyi} in the limiting sense,
    \begin{align*}
        \Ren_\infty(\mu\mmid \nu)
        &\deq \log {\Bigl\lVert \frac{\D\mu}{\D\nu}\Bigr\rVert_{L^\infty(\nu)}}\,.
    \end{align*}
    If $\mu \not\ll \nu$, then $\Ren_q(\mu \mmid \nu)$ is defined to be $+\infty$ for $q > 1$, and $\Ren_q(\mu\mmid\nu) \deq \frac{1}{q-1} \log \int (\frac{\D\mu}{\D\lambda})^q \, (\frac{\D\nu}{\D\lambda})^{1-q} \, \D \lambda$ for $q < 1$, where $\lambda$ is a common dominating measure for $\mu$ and $\nu$ (e.g., $\lambda = \mu+\nu$).
\end{defin}

Another special case worth remarking is $q=2$, in which case the R\'enyi divergence is related to the \emph{chi-squared divergence}
\begin{align*}
    \chi^2(\mu\mmid \nu)
    &\deq \var_\nu \frac{\D\mu}{\D\nu}
    = \int \Bigl(\frac{\D\mu}{\D\nu}\Bigr)^2 \, \D \nu - 1\,,
\end{align*}
via the expression $\Ren_2(\mu\mmid \nu) = \exp(1+\chi^2(\mu \mmid \nu))$.

For later convenience, we define $\Hell_q$ for $q \ne 1$ to be the $f$-divergence corresponding to
\begin{align*}
    f_q(x) \deq \begin{cases}
        x^q - 1\,, & q > 1\,, \\
        1-x^q\,, & q < 1\,.
    \end{cases}
\end{align*}
In other words, we set
\begin{align*}
    \Hell_q(\mu \mmid \nu)
    &\deq \int f_q\bigl( \frac{\D\mu}{\D\nu}\bigr) \, \D \nu\,.
\end{align*}
Note that $f_q$ is convex with $f_q(1) = 0$. We have the relationships
\begin{align}\label{eq:ren_and_hell}
    \Ren_q(\mu \mmid \nu) = \frac{1}{q-1} \begin{cases}
        \log(1+\Hell_q(\mu\mmid \nu))\,, & q > 1\,, \\
        \log(1-\Hell_q(\mu\mmid \nu))\,, & q < 1\,.
    \end{cases}
\end{align}

We also summarize a number of standard properties of R\'enyi divergences that we use repeatedly throughout the paper. Proofs and further discussion of these properties can be found, e.g., in the surveys~\cite{van2014renyi,mironov2017renyi}. Since we provide a slightly modified restatement of the R\'enyi composition rule that is helpful for our development, we provide a proof in \S\ref{app:pf_composition} for completeness.

\begin{theorem}\label{thm:renyi_prop}
    Let $q \in (0,\infty]$ and let $\mu$, $\nu$ be probability measures.
    \begin{enumerate}
        \item (Positivity) $\Ren_q(\mu \mmid \nu) \ge 0$, with equality if and only if $\mu = \nu$.
        \item (Monotonicity) R\'enyi divergences are increasing in the order, i.e., $q \mapsto \Ren_q(\mu \mmid \nu)$ is increasing.
        \item (Data processing inequality) For any Markov kernel $P$, it holds that $\Ren_q(\mu P \mmid \nu P) \le \Ren_q(\mu \mmid \nu)$.
        \item (KL chain rule) Using the notation introduced in \S\ref{sec:intro},
        \begin{align*}
            \KL(\bs\mu^{X,Y} \mmid \bs \nu^{X,Y})
            &= \KL(\bs \mu^X \mmid \bs \nu^X) + \int \KL(\bs \mu^{Y\mid X=x} \mmid \bs \nu^{Y\mid X=x}) \, \bs \mu^X(\D x)\,.
        \end{align*}
        \item (R\'enyi composition rule) For $q \in (0, 1)$,
        \begin{align*}
            \Ren_q(\bs\mu^{X,Y} \mmid \bs \nu^{X,Y})
            &\le \Ren_q(\bs \mu^X \mmid \bs \nu^X) + \esssup{(\bs\mu^X\wedge \bs\nu^X)}{\bigl[\Ren_q(\bs \mu^{Y\mid X=\bullet} \mmid \bs \nu^{Y\mid X=\bullet})\bigr]}\,.
        \end{align*}
        For $q\ge 1$,
        \begin{align}\label{eq:composition}
            \Ren_q(\bs\mu^{X,Y} \mmid \bs \nu^{X,Y})
            &\le \Ren_q(\bs \mu^X \mmid \bs \nu^X) + \esssup{\bs\mu^X}{\bigl[ \Ren_q(\bs \mu^{Y\mid X=\bullet} \mmid \bs \nu^{Y\mid X=\bullet})\bigr]}\,.
        \end{align}
        \item (Convexity) The divergences $\Hell_q$ (for $q\ne 1$) and $\Ren_q$ (for $q \le 1$) are jointly convex.
        Consequently, since $\Ren_q$ is an increasing transformation of $\Hell_q$ for $q > 1$, it follows that $\Ren_q$ is jointly quasi-convex for the entire range $q > 0$.
        \item (Gaussian identity) $\Ren_q(\cN(x,\sig^2 I) \mmid \cN(y,\sig^2 I)) = \frac{q\,\|x-y\|^2}{2\sig^2}$. 
    \end{enumerate}
\end{theorem}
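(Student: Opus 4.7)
The plan is to note that six of the seven assertions are classical results for which I would invoke the surveys~\cite{van2014renyi,mironov2017renyi}: positivity follows from Jensen's inequality applied to the strictly convex integrand $x\mapsto x^q$ (or $x\log x$ in the $q=1$ limit), with equality iff the density ratio is $\nu$-a.s.\ constant; monotonicity in $q$ follows from convexity of $q\mapsto\log\int(\D\mu/\D\nu)^q\,\D\nu$ together with its boundary value at $q=1$; the data-processing inequality follows from Jensen applied to the conditional expectation $\E_\nu[\D\mu/\D\nu \mid P(\cdot)]$; the KL chain rule follows from disintegration of the joint density ratio and additivity of $\log$; joint convexity of $\Hell_q$ follows from convexity of the perspective function $(x,y)\mapsto y\,f_q(x/y)$, which transfers to quasi-convexity of $\Ren_q$ via the monotone relation~\eqref{eq:ren_and_hell}; and the Gaussian identity follows from direct computation of the ratio of two Gaussian densities.

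The substantive work is the R\'enyi composition rule (item~5), where I would spend the bulk of my effort. The starting point is the disintegration
\[\frac{\D\bs\mu^{X,Y}}{\D\bs\nu^{X,Y}}(x,y) = \frac{\D\bs\mu^X}{\D\bs\nu^X}(x)\cdot\frac{\D\bs\mu^{Y\mid X=x}}{\D\bs\nu^{Y\mid X=x}}(y).\]
Raising to the $q$th power and applying Fubini to integrate $y$ first against $\bs\nu^{Y\mid X=x}$, the inner integral collapses to $\exp((q-1)\,\Ren_q(\bs\mu^{Y\mid X=x}\mmid\bs\nu^{Y\mid X=x}))$, yielding the key identity
\[\int\Bigl(\frac{\D\bs\mu^{X,Y}}{\D\bs\nu^{X,Y}}\Bigr)^q\D\bs\nu^{X,Y} = \int\Bigl(\frac{\D\bs\mu^X}{\D\bs\nu^X}\Bigr)^q\exp\bigl((q-1)\,\Ren_q(\bs\mu^{Y\mid X=x}\mmid\bs\nu^{Y\mid X=x})\bigr)\D\bs\nu^X(x).\]
For $q>1$, bounding the exponential by its essential supremum, pulling it outside the $x$-integral, taking logs, and dividing by $q-1>0$ yields the stated bound; the ess sup is naturally taken with respect to $\bs\mu^X$, since the weight $(\D\bs\mu^X/\D\bs\nu^X)^q\,\D\bs\nu^X = (\D\bs\mu^X/\D\bs\nu^X)^{q-1}\,\D\bs\mu^X$ is absolutely continuous with respect to $\bs\mu^X$.

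The principal obstacle is the range $q\in(0,1)$, in which the prefactor $\frac{1}{q-1}$ flips sign (so I need a lower bound on the integral in place of the upper bound above) and, more delicately, absolute continuity of $\bs\mu$ with respect to $\bs\nu$ may fail outright. Here I would work with the $\Hell_q$ formulation, choose a common dominating measure $\lambda$ as in the definition of $\Ren_q$, disintegrate $\lambda$ in the $X$-coordinate, and apply the analogous factorization together with the convexity of $f_q$. The essential supremum must then be taken against $\bs\mu^X\wedge\bs\nu^X$ rather than $\bs\mu^X$, because the conditional divergence $\Ren_q(\bs\mu^{Y\mid X=x}\mmid\bs\nu^{Y\mid X=x})$ is only meaningfully controlled on the common absolutely continuous part of the two laws; this is precisely the feature that distinguishes the authors' slightly modified restatement from the standard formulation and where extra care beyond the $q>1$ argument is required.
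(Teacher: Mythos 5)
Your proposal matches the paper's treatment: the paper likewise defers items 1--4, 6, 7 to the cited surveys and proves only the composition rule (in \S\ref{app:pf_composition}), for $q>1$ via exactly your disintegration identity (phrased through $1+\Hell_q$ rather than $\exp((q-1)\,\Ren_q)$) with the essential supremum justified w.r.t.\ $\bs\mu^X$ by the same observation that the weight is $(\D\bs\mu^X/\D\bs\nu^X)^{q-1}\,\D\bs\mu^X \ll \bs\mu^X$, and for $q\in(0,1)$ by disintegrating the dominating measure $\tfrac12\,(\bs\mu^{X,Y}+\bs\nu^{X,Y})$ and restricting to the set where both marginal densities are positive, which produces the $\bs\mu^X\wedge\bs\nu^X$ essential supremum exactly as you anticipate. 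The only cosmetic differences are that the paper disposes of $q=1$ via the KL chain rule and $q=\infty$ by monotonicity and a limit, and that no convexity of $f_q$ is actually needed in the $q<1$ case---only the multiplicative factorization followed by an essential-infimum bound and the decreasing map $s\mapsto\frac{1}{q-1}\log(1-s)$.
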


\begin{remark}
    In the composition rule~\eqref{eq:composition}, it is important that the essential supremum on the RHS is taken w.r.t.\ $\bs \mu^X$.
    Indeed, for~\eqref{eq:composition}, we may assume that $\bs\mu^X \ll \bs \nu^X$ or else the bound is trivial.
    The conditional distribution $\bs \nu^{Y\mid X=\bullet}$ is defined $\bs\nu^X$-a.e., hence $\bs \mu^X$-a.e., and the expression on the RHS of~\eqref{eq:composition} therefore makes sense.
    On the other hand, $\bs \mu^{Y\mid X=\bullet}$ may not be defined $\bs \nu^X$-a.e.
\end{remark}

The composition rule is key to our work as it enables proving the shifted composition rule in \S\ref{ssec:disc:scp}. As such, we focus on the family of R\'enyi divergences rather than other $f$-divergences.

\section{Discrete-time arguments}\label{sec:disc}

\subsection{Shifted composition rule}\label{ssec:disc:scp}

The namesake of this paper (and the forthcoming series) is the following \emph{shifted composition rule}. Write $\Coup(\mu,\nu)$ for the set of couplings of two probability measures $\mu \in \mc P(\Omega_1)$, $\nu \in \mc P(\Omega_2)$, i.e., the set of probability measures $\gamma \in \mc P(\Omega_1\times \Omega_2)$ whose marginals are $\mu$ and $\nu$ respectively.

\begin{theorem}[Shifted composition rule]\label{thm:scr}
    Let $X$, $X'$, $Y$ be three jointly defined random variables on a standard probability space $\Omega$.
    Let $\bs\mu$, $\bs \nu$ be two probability measures over $\Omega$, with superscripts denoting the laws of random variables under these measures.
    \begin{enumerate}
        \item (Shifted chain rule) It holds that
        \begin{align*}
            \KL(\bs\mu^Y \mmid \bs \nu^Y)
            &\le \KL(\bs\mu^{X'} \mmid \bs \nu^X) + \inf_{\gamma \in \Coup(\bs \mu^X, \bs \mu^{X'})} \int \KL(\bs\mu^{Y\mid X=x} \mmid \bs \nu^{Y\mid X=x'}) \,\gamma(\D x, \D x')\,.
        \end{align*}
        \item Let $q \in (0,\infty]$. If $q \in (0,1)$, assume in addition that $\bs\mu^{X'} \ll \bs\nu^X$. Then, it holds that
        \begin{align*}
            \Ren_q(\bs\mu^Y \mmid \bs \nu^Y)
            &\le \Ren_q(\bs\mu^{X'} \mmid \bs \nu^X) + \inf_{\gamma \in \Coup(\bs \mu^X, \bs \mu^{X'})} {\esssup{\gamma}_{(x,x') \in \Omega \times \Omega}{\Ren_q(\bs\mu^{Y\mid X=x} \mmid \bs \nu^{Y\mid X=x'})}}\,.
        \end{align*}
    \end{enumerate}
\end{theorem}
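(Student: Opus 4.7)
The plan is to reduce the claim to the ordinary composition rule (Theorem~\ref{thm:renyi_prop}, item~5) and the data-processing inequality by introducing an auxiliary joint law that ``installs the shift.'' Fix any $\gamma \in \Coup(\bs\mu^X, \bs\mu^{X'})$ and, using the standard-probability-space hypothesis to provide a regular version of the conditional kernel $\bs\mu^{Y \mid X}$, construct a probability measure $\tilde{\bs\mu}$ on triples $(X, X', Y)$ as follows: sample $(X, X') \sim \gamma$, then draw $Y$ from $\bs\mu^{Y \mid X = X}$, so that $Y$ is conditionally independent of $X'$ given $X$. By construction the $(X, Y)$-marginal of $\tilde{\bs\mu}$ is $\bs\mu^{X, Y}$ (hence $\tilde{\bs\mu}^Y = \bs\mu^Y$), the $X'$-marginal is $\bs\mu^{X'}$, and the $(X, X')$-marginal is $\gamma$.

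Next, identify the $X'$-coordinate of $\tilde{\bs\mu}$ with the $X$-coordinate of $\bs\nu$, so that $\tilde{\bs\mu}^{X', Y}$ and $\bs\nu^{X, Y}$ live on a common space. Data processing applied to the projection $(x', y) \mapsto y$ gives $\Ren_q(\bs\mu^Y \mmid \bs\nu^Y) \le \Ren_q(\tilde{\bs\mu}^{X', Y} \mmid \bs\nu^{X, Y})$, and the composition rule~\eqref{eq:composition} (together with its $q \in (0,1)$ counterpart, where the hypothesis $\bs\mu^{X'} \ll \bs\nu^X$ collapses $\bs\mu^{X'} \wedge \bs\nu^X$ to $\bs\mu^{X'}$) upper bounds this by
\begin{align*}
\Ren_q(\bs\mu^{X'} \mmid \bs\nu^X) + \esssup{\bs\mu^{X'}}\Ren_q(\tilde{\bs\mu}^{Y \mid X' = \bullet} \mmid \bs\nu^{Y \mid X = \bullet})\,.
\end{align*}

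The remaining step is to re-express the conditional $\tilde{\bs\mu}^{Y \mid X' = x'}$ in terms of $\bs\mu^{Y \mid X}$. By construction it is the mixture $\int \bs\mu^{Y \mid X = x} \, \tilde{\bs\mu}^{X \mid X' = x'}(\D x)$, so joint quasi-convexity of $\Ren_q$ (Theorem~\ref{thm:renyi_prop}, item~6), applied in the first slot with the second slot held fixed at $\bs\nu^{Y \mid X = x'}$, bounds it by $\esssup{\tilde{\bs\mu}^{X \mid X' = x'}}\Ren_q(\bs\mu^{Y \mid X = \bullet} \mmid \bs\nu^{Y \mid X = x'})$. Combining this with the outer essential supremum and using $\tilde{\bs\mu}^{X, X'} = \gamma$ produces a single ess sup with respect to $\gamma$; taking the infimum over couplings then yields the R\'enyi statement. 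The KL inequality follows the identical template: the KL chain rule (item~4) replaces the composition rule, yielding an integral in place of the ess sup, and the integral form of joint convexity of KL transfers the mixture cost inside the integral back to $\gamma$.

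The main subtlety lies in this quasi-convexity step: for $q > 1$, R\'enyi divergence is only jointly quasi-convex rather than jointly convex, so one cannot directly pass the mixture through an integral; however, quasi-convexity evaluated against arbitrary mixtures is precisely strong enough to produce the ess-sup bound that matches the ess sup arising from the composition rule. The remaining technicalities---disintegration of $\gamma$ to obtain $\tilde{\bs\mu}^{X \mid X' = \bullet}$, the handling of nested essential suprema, and the $q \in (0,1)$ absolute-continuity assumption---are routine under the standard probability space hypothesis.
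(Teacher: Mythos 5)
Your proposal is correct and follows essentially the same route as the paper: the auxiliary law $\tilde{\bs\mu}$ you build from $\gamma$ and $\bs\mu^{Y\mid X}$ is exactly the paper's gluing-lemma construction making $X'\to X\to Y$ a Markov chain, followed by the same chain of data processing, the composition rule (with $\bs\mu^{X'}\ll\bs\nu^X$ handling the $q<1$ essential supremum), and convexity/quasi-convexity applied to the mixture representation of $\bs\mu^{Y\mid X'=x'}$. No substantive differences to report.
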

\begin{proof}
    First, note that statement we wish to prove only depends on the laws $\bs \mu^{X,Y}$, $\bs \nu^{X,Y}$, and $\bs \mu^{X'}$, and hence we are free to choose the coupling between $X'$ and $(X,Y)$.
    Given any coupling $\gamma \in \Coup(\bs\mu^X, \bs \mu^{X'})$, we can jointly define $(X,X',Y)$ such that $\bs\mu^{X,X'} = \gamma$ using the \emph{gluing lemma} (see~\cite[Lemma 7.6]{Vil03Topics}), i.e., we set $\bs \mu^{X,X',Y}(\D x, \D x', \D y) = \bs \mu^{X,Y}(\D x, \D y) \, \gamma^{2\mid 1}(\D x' \mid x)$, where $\gamma^{2\mid 1}$ denotes the disintegration of $\gamma$ along the first coordinate.
    Note that with this choice, under $\bs \mu$, $X'$ and $Y$ are conditionally independent given $X$, i.e., $X'\to X \to Y$ form a $\bs\mu$-Markov chain.
    
    By the data processing inequality and the KL chain rule or the R\'enyi composition rule respectively, and using $\bs\mu^{X'} \ll \bs \nu^X$ to write $\esssup{(\bs\mu^{X'} \wedge\bs\nu^X)} = \esssup{\bs\mu^{X'}}$,
    \begin{align*}
        \KL(\bs\mu^Y \mmid \bs \nu^Y)
        &\le \KL(\bs\mu^{X',Y} \mmid \bs \nu^{X,Y})
        = \KL(\bs\mu^{X'} \mmid \bs \nu^X) + \int \KL(\bs \mu^{Y\mid X'=x'} \mmid \bs \nu^{Y\mid X=x'}) \, \bs \mu^{X'}(\D x')\,, \\
        \Ren_q(\bs\mu^Y \mmid \bs \nu^Y)
        &\le \Ren_q(\bs\mu^{X',Y} \mmid \bs \nu^{X,Y})
        \le \Ren_q(\bs\mu^{X'} \mmid \bs \nu^X) + \esssup{\bs\mu^{X'}}_{x'\in \Omega} \Ren_q(\bs \mu^{Y\mid X'=x'} \mmid \bs \nu^{Y\mid X=x'})\,.
    \end{align*}
    Next, by conditioning, we write
    \begin{align*}
        \bs \mu^{Y\mid X'=x'} = \int \bs\mu^{Y\mid X=x, X'=x'} \, \bs \mu^{X\mid X'}(\D x \mid x')
        = \int \bs\mu^{Y\mid X=x} \, \bs \mu^{X\mid X'}(\D x \mid x')
    \end{align*}
    where we used the fact that under $\bs \mu$, $X'$ and $Y$ are conditionally independent given $X$.
    Using the convexity of the KL divergence and the quasi-convexity of the R\'enyi divergence,
    \begin{align*}
        \int \KL(\bs \mu^{Y\mid X'=x'} \mmid \bs \nu^{Y\mid X=x'}) \, \bs \mu^{X'}(\D x')
        &\le \int \KL(\bs \mu^{Y\mid X=x} \mmid \bs \nu^{Y\mid X=x'}) \, \bs \mu^{X\mid X'}(\D x \mid x')\,\bs \mu^{X'}(\D x') \\
        &= \int \KL(\bs \mu^{Y\mid X=x} \mmid \bs \nu^{Y\mid X=x'}) \, \bs \mu^{X, X'}(\D x,\D x')\,.
    \end{align*}
    and
    \begin{align*}
        \esssup{\bs\mu^{X'}}_{x'\in \Omega} \Ren_q(\bs \mu^{Y\mid X'=x'} \mmid \bs \nu^{Y\mid X=x'})
        &\le \esssup{\bs\mu^{X,X'}}_{(x,x')\in \Omega\times \Omega} \Ren_q(\bs \mu^{Y\mid X=x} \mmid \bs \nu^{Y\mid X=x'})\,.
    \end{align*}
    The conclusion follows because $\bs\mu^{X,X'} = \gamma \in \Coup(\bs\mu^X,\bs\mu^{X'})$ was arbitrary.
\end{proof}

For $q \ge 1$, if we take $\bs\mu^X = \bs \mu^{X'}$ and we take $\gamma$ to be the trivial coupling $\gamma(\D x, \D x') = \bs\mu^X(\D x) \, \delta_x(\D x')$, then the shifted composition rule reduces back to the KL chain rule or the R\'enyi composition rule respectively.
However, the added flexibility of introducing the auxiliary random variable $X'$ allows the shifted composition rule to tackle a variety of new applications, some of which will be explored in future work.
In this paper, we illustrate the use of this principle for proving Harnack and reverse transport inequalities, as discussed in \S\ref{sec:intro}.

\subsection{One-step to multi-step bounds}\label{ssec:disc:reduction}

We now turn toward the main application of the shifted composition rule considered in the present paper, namely, the derivation of reverse transport inequalities of the form $\Ren_q(\delta_x P^n \mmid \delta_y P^n) \lesssim \norm{x-y}^2$, where $P$ is a Markov kernel on $\R^d$ and $0 < q \le \infty$. We also refer to such inequalities as \emph{regularity bounds} since they encode regularizing properties of the Markov kernel $P$, see \S\ref{ssec:apps:background} for further discussion. Our result below shows that if the Markov kernel $P$ is Lipschitz w.r.t.\ the Wasserstein metric, then an optimal \emph{multi-step} regularity bound is implied by a \emph{one-step} regularity bound $\Ren_q(\delta_x P \mmid \delta_y P) \lesssim \|x-y\|^2$, which is typically much easier to establish.
In the next section, we will then show how to upgrade the regularity bounds to hold for arbitrary initializations $\mu,\nu \in \mc P(\R^d)$ in a black-box manner.

\begin{theorem}\label{thm:discrete-main}
    Let $0 < q \le \infty$.
    Suppose that $P$ is a Markov kernel on $\R^d$ satisfying the two following conditions.
	\begin{enumerate}[label=(\alph*)]
		\item\label{ass:one_step_reg} $P$ satisfies a $1$-step regularity bound for Dirac initializations; i.e., there exists $c > 0$ such that
		\begin{align}\label{eq:one_step_reg}
			\Ren_q(\delta_x P \mmid \delta_y P) \leq c\,\|x-y\|^2\, \qquad \forall x,y \in \R^d\,.
		\end{align}
          If $q < 1$, we assume for technical reasons that~\eqref{eq:one_step_reg} also holds for $q=1$, possibly with some other constant $c' < \infty$.
		\item\label{ass:wass_lip} $P$ is Wasserstein-Lipschitz; i.e., there exists $L > 0$ such that
		\begin{align*}
			W_{\infty}(\mu P, \nu P) \leq L\, W_{\infty}(\mu,\nu)\,, \qquad \forall \mu,\nu \in \cP(\R^d)\,.
		\end{align*}
	\end{enumerate}
	Then, for all $x,y\in\R^d$,
	\begin{align}
	\Ren_q(\delta_x P^N \mmid \delta_y P^N) &\leq  c \, \frac{L^{-2}-1}{L^{-2N}-1} \, \norm{x-y}^2\,.
 \label{eq-thm:discrete-main}
	\end{align}
\end{theorem}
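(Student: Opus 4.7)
The plan is to proceed by induction on $N$, with the inductive step driven by a single application of the shifted composition rule (Theorem~\ref{thm:scr}). Letting $a_N$ denote a constant such that $\Ren_q(\delta_x P^N \mmid \delta_y P^N) \le a_N\,\|x-y\|^2$ holds for all $x,y \in \R^d$, I would derive the recursion $1/a_{N+1} = 1/a_N + 1/(c L^{2N})$ with base case $a_1 = c$ (exactly hypothesis~\ref{ass:one_step_reg}). Unrolling and summing the resulting geometric series yield
\begin{align*}
    \frac{1}{a_N} = \frac{1}{c}\sum_{k=0}^{N-1} L^{-2k} = \frac{L^{-2N}-1}{c\,(L^{-2}-1)}\,,
\end{align*}
which rearranges to~\eqref{eq-thm:discrete-main}.

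For the inductive step, I would apply Theorem~\ref{thm:scr} with $\bs\mu$ (resp.\ $\bs\nu$) the joint law of $(X,Y) \deq (X_N, X_{N+1})$ for the chain driven by $P$ started from $x$ (resp.\ $y$), so that $\bs\mu^X = \delta_x P^N$, $\bs\nu^X = \delta_y P^N$, $\bs\mu^Y = \delta_x P^{N+1}$, $\bs\nu^Y = \delta_y P^{N+1}$, and both conditional kernels $\bs\mu^{Y\mid X=\bullet}$ and $\bs\nu^{Y\mid X=\bullet}$ coincide with $P(\bullet,\cdot)$. The key move is to introduce the shift $\bs\mu^{X'} \deq \delta_z P^N$ at the interpolating point $z \deq x + t\,(y-x)$ for $t \in [0,1]$ to be optimized. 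Iterating hypothesis~\ref{ass:wass_lip} $N$ times gives $W_\infty(\delta_x P^N, \delta_z P^N) \le L^N\,\|x-z\|$, producing a coupling $\gamma \in \Coup(\delta_x P^N, \delta_z P^N)$ with $\|X-X'\| \le L^N\,\|x-z\|$ almost surely. Theorem~\ref{thm:scr} then produces two terms: the ``global'' one, $\Ren_q(\delta_z P^N \mmid \delta_y P^N) \le a_N\,\|z-y\|^2$, handled by the inductive hypothesis; and the ``local'' one, $\Ren_q(P(x,\cdot) \mmid P(x',\cdot)) \le c\,\|x-x'\|^2 \le cL^{2N}\,\|x-z\|^2$, which holds $\gamma$-almost surely by hypothesis~\ref{ass:one_step_reg}. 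Their sum, parameterized by $t$, is $[a_N\,(1-t)^2 + cL^{2N}\,t^2]\,\|x-y\|^2$; minimizing this quadratic in $t$ gives $t^\star = a_N/(a_N + cL^{2N})$ with optimal value $\frac{a_N\,cL^{2N}}{a_N + cL^{2N}}\,\|x-y\|^2$, which is precisely the claimed harmonic recursion for $a_{N+1}$.

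The main technical subtlety is the absolute-continuity hypothesis $\bs\mu^{X'} \ll \bs\nu^X$ required by Theorem~\ref{thm:scr} when $q \in (0,1)$. I would address this by first running the argument above at $q = 1$---which is available thanks to the second clause of hypothesis~\ref{ass:one_step_reg}---to obtain $\KL(\delta_z P^N \mmid \delta_y P^N) < \infty$ and hence $\delta_z P^N \ll \delta_y P^N$; the $q < 1$ argument then proceeds unchanged. Beyond this, once one recognizes that the claimed constant has the structure of a harmonic mean of the terms $c L^{2k}$, the choice of shift $\delta_z P^N$ along the linear interpolation between $x$ and $y$, together with the optimal weight $t^\star$, is essentially forced, so no further difficulties arise.
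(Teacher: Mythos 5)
Your proposal is correct and reaches the paper's exact constant: the harmonic recursion $1/a_{N+1}=1/a_N+1/(cL^{2N})$ is the same one-parameter quadratic minimization that the paper solves in Lemma~\ref{lem:shift-opt:discrete}, just unrolled from the other end, and the inductive step is a legitimate use of Theorem~\ref{thm:scr} (the bound only depends on $\bs\mu^{X,Y}$, $\bs\nu^{X,Y}$, $\bs\mu^{X'}$, so you may freely adjoin $X'$ with law $\delta_z P^N$). Your fix for $q<1$---running the KL version first, which needs no absolute-continuity hypothesis, to conclude $\KL(\delta_z P^N \mmid \delta_y P^N)<\infty$ and hence $\delta_z P^N\ll\delta_y P^N$---is exactly the paper's footnote. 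The organization, however, differs from the paper's. The paper fixes the horizon, builds an explicit auxiliary interpolating process $\{\mu_n'\}_{n=0}^N$ (via synchronous or Wasserstein coupling) with per-step shift parameters $\eta_n$, applies the shifted composition rule once per time step to the pair $(\mu_n',\nu_n)$, and only at the end optimizes all shifts jointly; you instead induct on the horizon, apply the shifted composition rule once per level with the auxiliary law obtained by shifting the \emph{initial} point to $z=x+t\,(y-x)$, transport the initial displacement through $N$ steps via the iterated $W_\infty$-Lipschitz bound, and optimize a single weight $t$ per level. Your route is shorter and absorbs the shift optimization of \S\ref{app:calc-discrete} into the induction; what the paper's heavier construction buys is everything downstream of the theorem: the explicit shifted processes are what pass to the continuous-time arguments of \S\ref{sec:cont} (Girsanov for the synchronous coupling, Otto calculus for the Wasserstein one), interpret the shifted-divergence method from differential privacy, and extend verbatim to time-inhomogeneous kernels (used in Theorem~\ref{thm:mult_noise}), none of which is visible in the horizon induction. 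One shared convention to note: both your argument and the paper's use a coupling attaining the $W_\infty$ distance; if one prefers not to invoke attainment, an $\varepsilon$-near-optimal coupling followed by $\varepsilon\searrow 0$ gives the same bound.
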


\begin{remark}[Optimality of the bound]\label{rem:multi-step}
    The multi-step bound~\eqref{eq-thm:discrete-main} is optimal in the absence of further assumptions on $P$ (see \S\ref{ssec:disc:langevin}). We remark that while it is trivial to prove the weaker bound $\Ren_q(\delta_x P^N \mmid \delta_y P^N) \le cL^{2N-2}\,\norm{x-y}^2$ by applying Assumption~\ref{ass:one_step_reg} for one step and~\ref{ass:wass_lip} for the remaining steps, that na\"{\i}ve bound is weaker to the point of being vacuous\footnote{E.g., for the Langevin SDE discussed in \S\ref{ssec:disc:langevin}, this na\"{\i}ve argument gives a R\'enyi regularity bound of order $O(h^{-1})$ which is vacuous in the continuous time limit $h \searrow 0$. This bounds is vacuous because it only makes use of a vanishing amount of regularization (just one step).}
    for the applications we have in mind, namely time discretizations of diffusions with small step size parameter $h > 0$. 
\end{remark}

\begin{remark}[$W_{\infty}$-Lipschitz assumption]\label{rmk:improving_winf}
    For simplicity, we state Theorem~\ref{thm:discrete-main} under the assumption that $P$ is Lipschitz in the $W_{\infty}$ distance. This enables covering all of the R\'enyi divergences using the same proof.
    However, for the KL divergence ($q=1$), the $W_{\infty}$-Lipschitz assumption can be relaxed to a $W_2$-Lipschitz assumption, which is strictly weaker. This follows by replacing occurrences of $W_{\infty}$ with $W_2$ in the relevant parts of the proof, namely in~\eqref{eq:pf-thm-discrete:distance} and~\eqref{eq:pf-thm-discrete:div} (using the shifted chain rule specific to the KL divergence). 
\end{remark}

\begin{remark}[Verifying Wasserstein-Lipschitzness]
    In order to verify the assumption that $P$ is $W_2$- or $W_\infty$-Lipschitz, it suffices to check this condition when $\mu,\nu$ are Dirac measures. This follows from an elementary coupling argument, see e.g.,~\cite[\S A.2]{chenetal2022proximalsampler} for the proof in the $W_2$ case.

\end{remark}

\begin{figure}
\caption{\footnotesize Both the synchronous coupling and Wasserstein coupling approaches produce an auxiliary stochastic process $\{\mu_n'\}_{n=0}^N$ that interpolates between $\{\mu_n\}_{n=0}^N$ and $\{\nu_n\}_{n=0}^N$ in the sense that $\mu_0' = \nu$ and $\nu_N' = \nu_N$.}
\centering
\includegraphics[width=0.4\textwidth]{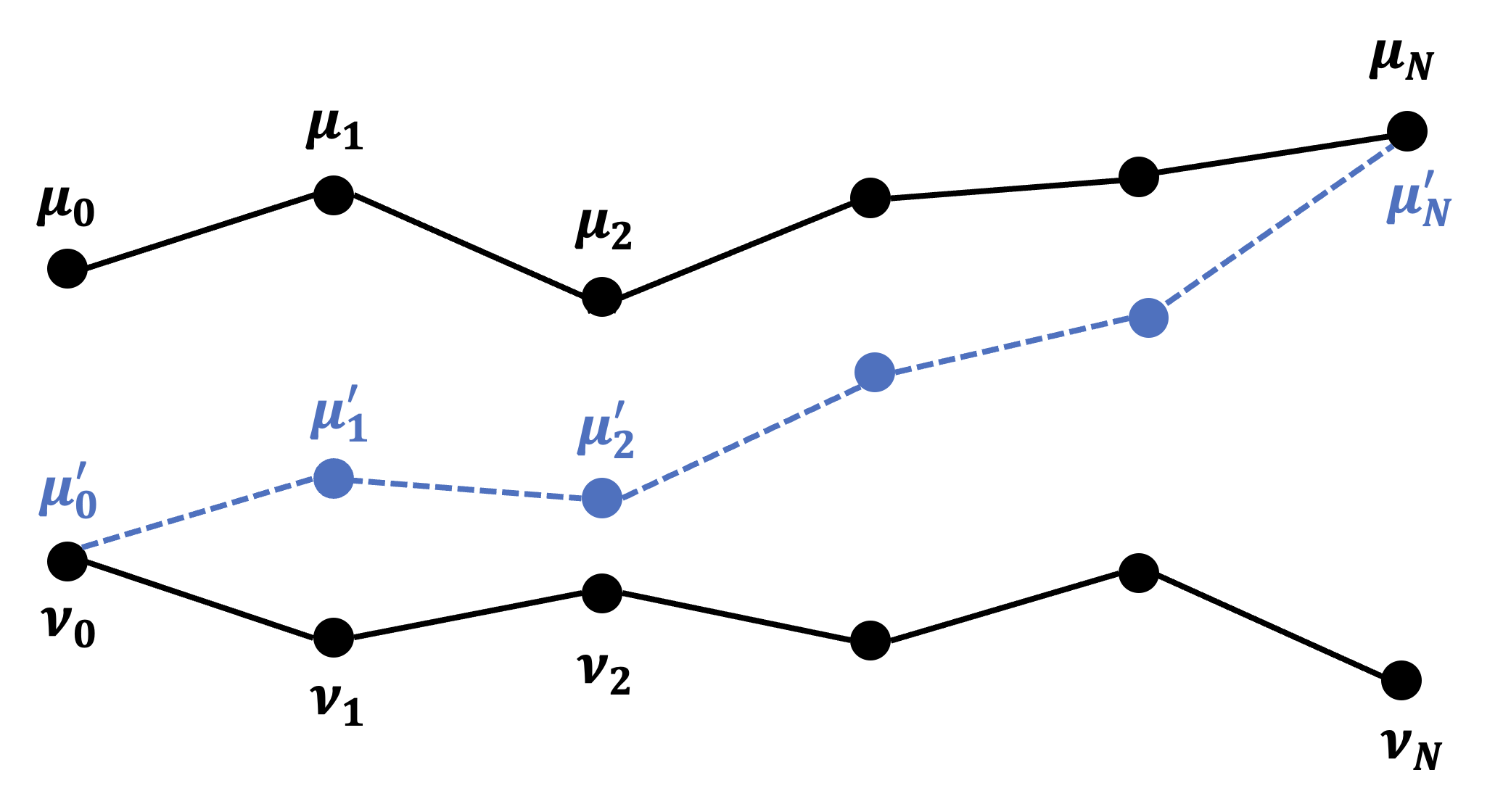}
\label{fig:aux-process}
\end{figure}

\par 
The key ingredient in our proof is the introduction of a \emph{shifted interpolated process}. Since this is a central element of our analysis---for the discrete-time arguments in this section as well as for the continuous-time arguments in \S\ref{sec:cont}---we isolate this idea before the proof. We construct an auxiliary process $\{\mu_n'\}_{n=0}^N$ that interpolates between the processes $\{\mu_n \deq \delta_x P^n\}_{n=0}^N$ and $\{\nu_n  \deq  \delta_y P^n\}_{n=0}^N$, in the sense that it matches one at initialization and matches the other at termination:
 \begin{align}
     \mu_0' = \nu_0 \qquad \text{and} \qquad \mu_N' = \mu_N\,.
     \label{eq:interp-disc}
 \end{align}
 See Figure~\ref{fig:aux-process}.
Since $\mu_N' = \mu_N$, it obviously holds that the left hand side of the regularity bound~\eqref{eq-thm:discrete-main} is equal to $\Ren_q(\mu_N' \mmid \nu_N) = \Ren_q(\mu_N \mmid \nu_N)$.
The insight behind this construction is that rather than using the composition rule to bound the divergence between the original processes $\{\mu_n\}_{n=0}^N$ and $\{\nu_n\}_{n=0}^N$, it is more efficient to bound the divergence between the \emph{auxiliary} process $\{\mu_n'\}_{n=0}^N$ and $\{\nu_n\}_{n=0}^N$ using the \emph{shifted} composition rule. (In fact, the divergence between the original processes is infinite since $\mu_0 = \delta_x$ and $\nu_0 = \delta_y$ are singular w.r.t.\ each other.)
 
\par There are two constructions of $\{\mu_n'\}_{n=0}^N$ that suffice for our purpose. For both, we set 
\begin{align*}
 \mu_n'  \deq \law(X_n')\,, \qquad n=0,1,\dotsc,N\,,
\end{align*}
for a stochastic process $\{X_n'\}_{n=0}^N$ to be defined below. The interest in considering these two shifted interpolated processes is that they lead to different generalizations in continuous time, as demonstrated in \S\ref{sec:cont}.

\paragraph*{Synchronous coupling.}
Jointly define processes $\{X_n\}_{n=0}^N$, $\{X_n'\}_{n=0}^N$ such that $X_n\sim\mu_n$ and $X_n' \sim \mu_n'$ for all $n$.
We start with $X_0 = x$ and $X_0' = y$.
Assuming that $(X_n, X_n')$ have been jointly defined, define $(X_{n+1}, X_{n+1}')$ as follows. Set
\begin{align}\label{eq:sync_coup}
    \tilde X_n
    &\deq X_n' + \eta_n \,(X_n - X_n')
\end{align}
for a scalar $\eta_n \ge 0$ to be chosen later.
Then, conditional on $(X_n, X_n')$, draw $X_{n+1} \sim P(X_n,\cdot)$, $X_{n+1}'\sim P(\tilde X_n,\cdot)$ so that $\norm{X_{n+1}-X_{n+1}'}_{L^\infty(\Pr)} = W_\infty(P(X_n,\cdot), P(\tilde X_n,\cdot))$.

\paragraph*{Wasserstein coupling.} Assuming that $X_n'$ has already been defined, let $X_n\sim \mu_n$ be \emph{optimally} coupled with $X_n'$ for the $W_\infty$ metric and define $\tilde X_n$ via~\eqref{eq:sync_coup}.
Then, conditional on $\tilde X_n$, draw $X_{n+1}'\sim P(\tilde X_n,\cdot)$.

\paragraph*{Remarks on the constructions.}
    In both constructions, $\eta_n$ controls how much the process $X_n'$ is corrected in the direction of $X_n$. This enables us to make progress in each iteration towards achieving the termination criterion of matching $\mu_N' = \law(X_N')$ to $\mu_N = \law(X_N)$. 
    In both settings, we take $\eta_{N-1} = 1$ so that $\mu_N' = \mu_N$, and we optimize the other shifting parameters $\eta_0,\dots, \eta_{N-2}$ below to obtain the best possible final bound. 

    \par In order to prove a bound of the form~\eqref{eq-thm:discrete-main}, we make two key observations. First, the distance between $\mu_n$ and $\mu_n'$ contracts in each iteration. Second, the divergence $\Ren_q(\mu_{n}' \mmid \nu_{n})$ can be controlled via the shifted composition rule.
    
\begin{proof}[Proof of Theorem~\ref{thm:discrete-main}]
\textbf{Distance bound for the auxiliary process.}
We give the argument for the synchronous shifted interpolation.
Below, we work over an underlying probability space $(\Omega,\ms F, \Pr)$.
Via a coupling argument, almost surely,
\begin{align*}
    \norm{X_{n+1}-X_{n+1}'}
    \le W_\infty(P(X_n,\cdot), P(\tilde X_n,\cdot))
    \le L\,\norm{X_n-\tilde X_n}
    = L\,\abs{1-\eta_n}\,\norm{X_n - X_n'}\,,
\end{align*}
hence
\begin{align}
    \norm{X_{n+1}-X_{n+1}'}_{L^\infty(\Pr)}
    &\le L\,\abs{1-\eta_n}\,\norm{X_n - X_n'}_{L^\infty(\Pr)}\,.\label{eq:pf-thm-discrete:distance}
\end{align}
	Above, the second inequality is by the Lipschitz assumption~\ref{ass:wass_lip} on the kernel $P$.
    By iterating this bound and recalling that $\mu_0' = \delta_y$ by construction, we conclude that for all $n$,
    \begin{align}\label{eq:main_thm_dist_bd}
        \norm{X_n-X_n'}_{L^\infty(\Pr)} \le \Bigl[L^{n} \prod_{k=0}^{n-1} \abs{1-\eta_k} \Bigr]\,\norm{x-y} \,.
    \end{align}
    The distance bound for the Wasserstein interpolated process is similar, except that in~\eqref{eq:main_thm_dist_bd} we replace the left-hand side with $W_\infty(\mu_n,\mu_n')$.
    
    \textbf{Divergence bound for the auxiliary process.} The second key bound controls $\Ren_q$ between the auxiliary process $\mu_n'$ and $\nu_n$. Again, we consider the synchronous shifted interpolation.
 \begin{align}
		\Ren_q(\mu_{n+1}' \mmid \nu_{n+1})
		&\leq
		\Ren_q(\mu_n' \mmid \nu_n ) + \bigl\lVert \Ren_q( P(\tilde{X}_n,\cdot) \mmid P(X_n',\cdot) )\bigr\rVert_{L^\infty(\Pr)} \nonumber
		\\ &\leq
		\Ren_q(\mu_n' \mmid \nu_n ) + c\, \norm{\tilde X_n - X_n'}_{L^\infty(\Pr)}^2 \nonumber
		\\ &= \Ren_q(\mu_n' \mmid \nu_n) + c \eta_n^2\, \norm{X_n-X_n'}_{L^\infty(\Pr)}^2 \,.
        \label{eq:pf-thm-discrete:div}
	\end{align} 
    Above, the first step is by an application of the shifted composition rule\footnote{When $q<1$, we must check that $\mu_n' \ll \nu_n$ for all $n=0,1,\dotsc,N$. However, this follows from our argument since we have assumed in this case that the assumption~\ref{ass:one_step_reg} also holds for $q=1$. Our proof therefore shows that $\KL(\mu_n'\mmid \nu_n) <\infty$.} (Theorem~\ref{thm:scr}) where $\bs{\mu}$ is the joint distribution under which $X \sim \tilde{\mu}_n$, $X' \sim \mu_n'$, and $Y \sim P(X,\cdot)$; and $\bs{\nu}$ is the joint distribution under which $X \sim \nu_n$ and $Y \sim P(X,\cdot)$. The second step is by the assumption~\ref{ass:one_step_reg}. The final step is by construction of $\tilde{X}_n$. 
    For the Wasserstein interpolated process, we replace the second term on the right-hand side of~\eqref{eq:pf-thm-discrete:div} with $c\eta_n^2\,W_\infty^2(\mu_n,\mu_n')$.
    
	\textbf{Optimizing the shifts.} 
    Combining the two bounds above yields
	\[	
 \Ren_q(\mu_N \mmid \nu_N) =
    \Ren_q(\mu_N' \mmid \nu_N) \leq 
    \Bigl[c \sum_{n=0}^{N-1} L^{2n} \eta_n^2 \prod_{k=0}^{n-1} (1 - \eta_k)^2\Bigr]\,\norm{x-y}^2\,.
	\]
    Recall that this bound holds for any values of the shifts $\eta_0, \dots, \eta_{N-1}$ subject to the constraint $\eta_{N-1}=1$ (required to ensure the termination criterion $\mu_N' = \mu_N$). Thus we may optimize the above bound over all such choices of $\eta$. This optimization problem is straightforward to solve in closed form, as detailed in \S\ref{app:calc-discrete}. Plugging in the optimal value $\frac{L^{-2}-1}{L^{-2N}-1}$ completes the proof.
\end{proof}

\par We remark in passing that our analysis readily generalizes to non-stationary processes in which different Markov kernels are applied in each iteration.

\subsection{Convexity principle}\label{ssec:disc:convexity}

In the previous subsection, we proved regularity bounds for Markov chains initialized at Dirac distributions. Here, we reduce the problem of proving regularity bounds from \emph{arbitrary} initializations to the case of Dirac initializations. The simple but key observation underlying this reduction is the following \emph{convexity principle}.

\begin{lemma}[Convexity principle]\label{lem:convexity}
    For any jointly convex function $\cD$, any Markov kernel $P$, and any distributions $\mu,\nu$,
    \begin{align}
        \cD(\mu P \mmid \nu P) 
        \leq
        \inf_{\gamma \in \Coup(\mu,\nu)} \int \cD(\delta_x P \mmid \delta_y P)\,\gamma(\D x,\D y)\,.
    \end{align}    
\end{lemma}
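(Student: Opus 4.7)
The plan is to exploit the fact that, for any coupling $\gamma \in \Coup(\mu,\nu)$, both $\mu P$ and $\nu P$ admit natural representations as $\gamma$-mixtures of the Dirac-initialized measures $\delta_x P$ and $\delta_y P$. Namely, marginalizing $\gamma$ over its second coordinate yields $\mu$, so
\begin{align*}
\mu P = \int \delta_x P \, \mu(\D x) = \int \delta_x P \, \gamma(\D x, \D y),
\end{align*}
and symmetrically $\nu P = \int \delta_y P \, \gamma(\D x, \D y)$. Thus $(\mu P, \nu P)$ is the $\gamma$-average of the pair $(\delta_x P, \delta_y P)$ as $(x,y)$ ranges over $\R^d \times \R^d$.

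Next I would apply the joint convexity of $\cD$ in the form of Jensen's inequality for the $\gamma$-mixture:
\begin{align*}
\cD(\mu P \mmid \nu P) = \cD\Bigl(\int \delta_x P \, \gamma(\D x, \D y) \;\Big\|\; \int \delta_y P \, \gamma(\D x, \D y)\Bigr) \le \int \cD(\delta_x P \mmid \delta_y P) \, \gamma(\D x, \D y).
\end{align*}
Since $\gamma \in \Coup(\mu,\nu)$ was arbitrary, taking the infimum on the right-hand side yields the stated bound.

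There is essentially no obstacle here: the statement is a one-line consequence of joint convexity once one observes the mixture representation, which is why the authors describe this as a ``simple but key observation.'' The only minor subtlety is to interpret Jensen's inequality for jointly convex $\cD$ applied to a measure-valued average; this is standard and can be justified by a discrete approximation (approximating $\gamma$ by finitely-supported couplings, applying finite joint convexity, and passing to the limit) if one wishes to be fully rigorous, though for the divergences considered in this paper it follows directly from their definitions.
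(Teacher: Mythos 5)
Your proof is correct and follows essentially the same route as the paper: decompose $\mu P$ and $\nu P$ as $\gamma$-mixtures of $\delta_x P$ and $\delta_y P$, apply joint convexity, and take the infimum over couplings. The remark about justifying Jensen's inequality for measure-valued averages is a reasonable extra precaution but not needed beyond what the paper itself does.
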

\begin{proof}
    Fix any coupling $\gamma \in \Coup(\mu,\nu)$. Decompose $\mu$ as the mixture distribution $\int \delta_x\, \gamma(\D x,\D y)$, and similarly decompose $\nu = \int \delta_y\, \gamma(\D x,\D y)$. Joint convexity then implies
    \begin{align*}
        \cD(\mu P \mmid \nu P)
        &= \cD\Bigl(\int \delta_x P\, \gamma(\D x,\D y) \Bigm\Vert \int \delta_y P\, \gamma(\D x,\D y)\Bigr)
        \leq \int \cD(\delta_x P \mmid \delta_y P) \,\gamma(\D x,\D y)\,.
    \end{align*}
    The claim follows since $\gamma$ is an arbitrary coupling.
\end{proof}

We apply the convexity principle to the family of R\'enyi divergences by exploiting the basic fact from information theory that $f$-divergences are jointly convex (Theorem~\ref{thm:renyi_prop}).

\begin{theorem}[Application to R\'enyi divergences]\label{thm:renyi_cvxty_principle}
    Let $P$ be a Markov kernel on a Polish space $\cX$ and let $\rho$ be a measurable function on $\cX\times \cX$.
    \begin{enumerate}
        \item If $\KL(\delta_x P \mmid \delta_y P) \le \rho(x,y)$ for all $x,y\in\cX$, then
        \begin{align}\label{eq:kl_cvxty_principle}
            \KL(\mu P \mmid \nu P)
            &\le \inf_{\gamma\in\Coup(\mu,\nu)} \int \rho(x,y) \, \gamma(\D x, \D y) \qquad\text{for all}~\mu,\nu\in\cP(\cX)\,.
        \end{align}
        \item Let $q \in (0,\infty) \setminus \{1\}$. If $\Ren_q(\delta_x P \mmid \delta_y P) \le \rho(x,y)$ for all $x,y\in\cX$, then
        \begin{align}\label{eq:ren_cvxty_principle}
            \Ren_q(\mu P \mmid \nu P)
            &\le \inf_{\gamma \in \Coup(\mu,\nu)} \frac{1}{q-1} \log \int \exp\bigl((q-1)\,\rho(x,y)\bigr)\,\gamma(\D x, \D y)\,.
        \end{align}
    \end{enumerate}
\end{theorem}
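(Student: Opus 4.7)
The plan is to invoke the convexity principle (Lemma~\ref{lem:convexity}) together with the joint convexity statements in Theorem~\ref{thm:renyi_prop}. Part~(1) follows essentially immediately: the KL divergence is jointly convex, so Lemma~\ref{lem:convexity} with $\cD = \KL$ and the hypothesis $\KL(\delta_x P \mmid \delta_y P) \le \rho(x,y)$ yields~\eqref{eq:kl_cvxty_principle} directly after taking the infimum over couplings $\gamma$.

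For Part~(2), the subtlety is that $\Ren_q$ is only jointly quasi-convex (not jointly convex) for $q > 1$, so one cannot apply Lemma~\ref{lem:convexity} to $\Ren_q$ itself. The fix is to route the argument through $\Hell_q$, which \emph{is} jointly convex for every $q \ne 1$ by Theorem~\ref{thm:renyi_prop}, and then convert back using the identity~\eqref{eq:ren_and_hell}. Concretely, I would first translate the hypothesis $\Ren_q(\delta_x P \mmid \delta_y P) \le \rho(x,y)$ into an equivalent bound on $\Hell_q(\delta_x P \mmid \delta_y P)$ via~\eqref{eq:ren_and_hell}, then apply Lemma~\ref{lem:convexity} with $\cD = \Hell_q$, and finally reverse the transformation to obtain~\eqref{eq:ren_cvxty_principle}.

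In detail, for $q > 1$, the hypothesis rewrites as
\begin{align*}
1 + \Hell_q(\delta_x P \mmid \delta_y P) \le \exp\bigl((q-1)\,\rho(x,y)\bigr)\,.
\end{align*}
The convexity principle gives $\Hell_q(\mu P \mmid \nu P) \le \int \Hell_q(\delta_x P \mmid \delta_y P)\,\gamma(\D x,\D y)$; adding $1 = \int \gamma(\D x,\D y)$ to both sides and invoking the above pointwise bound yields
\begin{align*}
1 + \Hell_q(\mu P \mmid \nu P) \le \int \exp\bigl((q-1)\,\rho(x,y)\bigr)\,\gamma(\D x,\D y)\,.
\end{align*}
Applying $\frac{1}{q-1}\log(\cdot)$ and using~\eqref{eq:ren_and_hell} again produces the desired bound. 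For $q \in (0,1)$, the analogous chain of manipulations works with the substitution $1 - \Hell_q$ in place of $1 + \Hell_q$; the only thing to watch is that $q - 1 < 0$ flips the direction of inequalities when one exponentiates or takes logs, but these sign flips cancel out consistently since they occur both at the start (translating the hypothesis) and at the end (reversing to recover $\Ren_q$).

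The main (mild) obstacle is just careful bookkeeping of the sign of $q-1$ in the $q < 1$ case, but this is routine. Finally, the optimization over $\gamma \in \Coup(\mu,\nu)$ is carried through untouched in each step since every intermediate bound holds for each fixed coupling $\gamma$, and the conclusion follows by taking the infimum.
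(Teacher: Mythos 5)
Your proposal is correct and follows essentially the same route as the paper: Part (1) is the convexity principle applied directly to the jointly convex $\KL$ divergence, and Part (2) passes through the jointly convex $\Hell_q$ via the monotone relation~\eqref{eq:ren_and_hell} (the paper phrases this with the increasing map $g_q$ and its inverse, which is exactly your explicit bookkeeping, including the cancelling sign flips for $q<1$). No gaps.
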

\begin{proof}
    Since the $\KL$ divergence is an $f$-divergence and is therefore jointly convex,~\eqref{eq:kl_cvxty_principle} directly follows from the convexity principle (Lemma~\ref{lem:convexity}).
    Next, for $q\in (0,\infty) \setminus \{1\}$, although $\Ren_q$ is not jointly convex~\cite[\S III-B]{van2014renyi} (for $q > 1$), it is an increasing transformation of a jointly convex $f$-divergence. Namely, define $g_q : \R_+\to\R_+$ via
    \begin{align*}
        g_q(s) \deq \frac{1}{q-1} \begin{cases}
            \log(1+s)\,, & q > 1\,, \\
            \log(1-s)\,, & q < 1\,,
        \end{cases}
    \end{align*}
    so that $\Ren_q = g_q(\Hell_q)$ (see~\eqref{eq:ren_and_hell}).
    Then, $g_q$ and $g_q^{-1}$ are increasing and $\Hell_q$ is jointly convex, it follows from Lemma~\ref{lem:convexity} that
    \begin{align*}
        \Ren_q(\mu P \mmid \nu P)
        = g_q(\Hell_q(\mu P \mmid \nu P))
        &\le g_q\Bigl( \inf_{\gamma \in \Coup(\mu,\nu)} \int \Hell_q(\delta_x P \mmid \delta_y P)\,\gamma(\D x, \D y)\Bigr) \\
        &\le \inf_{\gamma \in \Coup(\mu,\nu)} g_q\Bigl( \int g_q^{-1}(\rho(x,y)) \, \gamma(\D x, \D y)\Bigr)\,.
    \end{align*}
    This concludes the proof of~\eqref{eq:ren_cvxty_principle} by considering $q > 1$ and $q < 1$ separately.
\end{proof}

\begin{remark}[Optimal transport]\label{rem:convexity-ot}
    These regularity bounds can be viewed as reverse transport inequalities since the convexity principle naturally extracts an optimal transport cost in the regularity bound. The precise cost function is dictated by the regularity bound from Dirac initializations. For the Langevin SDE, the corresponding optimal transport costs are the $2$-Wasserstein distance for KL regularity, and a sub-Gaussian coupling cost for R\'enyi regularity (see Theorem~\ref{thm:langevin-disc} below). In the latter case, the coupling cost can also be related to the Orlicz--Wasserstein distance, c.f.~\cite{AltChe23warm}.
\end{remark}

\paragraph*{Refinements.}
In the application of the convexity principle to R\'enyi divergences above, we first applied an increasing transformation $g$ before invoking joint convexity.
The flexibility offered by such transformations sometimes leads to more refined bounds.
In general, if we can write a divergence $\msf D$ as a function $\msf D = g(\msf D')$ of some other jointly convex divergence $\msf D'$, where $g$ is \emph{strictly increasing} and \emph{convex}, then the bound obtained from applying the convexity principle to $\msf D'$ is stronger, as a consequence of Jensen's inequality $g(\int g^{-1}(\cdots)) \le \int (\cdots)$.

For example, in the case $q < 1$, it is known that the R\'enyi divergence $\Ren_q$ is jointly convex~\cite[\S III-B]{van2014renyi}, and hence we could have applied the convexity principle to $\Ren_q$ directly.
In the proof of Theorem~\ref{thm:renyi_cvxty_principle} above, we instead applied the convexity principle to $\Hell_q$, where $\Ren_q = g_q(\Hell_q)$ and $g$ is strictly increasing and convex, which therefore yields a sharper bound.

The gamut of potential transformations expands when we consider convexity in the first or second argument alone which, as we show below, can be combined with a joint convexity inequality to obtain new bounds.
In particular, we will apply this idea to $\Ren_q$, $q > 1$, based on the following two convexity statements:
\begin{enumerate}
    \item ${(\Hell_q + 1)}^{1/q}$ is convex in its first argument.
    Indeed, ${(\Hell_q+1)}^{1/q}(\mu \mmid \nu) = \norm{\frac{\D \mu}{\D\nu}}_{L^q(\nu)}$ is convex w.r.t.\ $\mu$ due to the convexity of the $L^q(\nu)$ norm.
    \item $\Ren_q$ is convex in its second argument~\cite[\S III-B]{van2014renyi}.
\end{enumerate}
This leads to two refined bounds which involve weak optimal transport costs~\cite{Gozetal17WeakOT}; c.f.~\cite{BacPam22WeakOT}.
The effect of these refinements will be explored in the next section. 

\begin{theorem}[Refined R\'enyi bounds]\label{thm:renyi_cvxty_principle_refined}
    Let $q > 1$ and let $P$ be a Markov kernel on a Polish space $\cX$.
    Let $\rho$ be a measurable function on $\cX\times \cX$ such that $\Ren_q(\delta_x P \mmid \delta_y P) \le \rho(x,y)$ for all $x,y\in\cX$.
    Then, the following two inequalities hold, where we write $\gamma_{1\mid 2}$ for the conditional distribution of the first coordinate given the second under $\gamma$ and similarly for $\gamma_{2\mid 1}$:
    \begin{align}\label{eq:refined_1}
        \Ren_q(\mu P \mmid \nu P)
        &\le \inf_{\gamma\in\Coup(\mu,\nu)} \frac{1}{q-1} \log \int\Bigl\{ \int \exp\bigl(\frac{q-1}{q}\,\rho(x,y)\bigr)\,\gamma_{1\mid 2}(\D x \mid y)\Bigr\}^q \, \nu(\D y)
    \end{align}
    and
    \begin{align}\label{eq:refined_2}
        \Ren_q(\mu P \mmid \nu P)
        &\le \inf_{\gamma\in\Coup(\mu,\nu)} \frac{1}{q-1} \log\int \exp\Bigl((q-1)\int \rho(x,y) \,\gamma_{2\mid 1}(\D y \mid x)\Bigr) \, \mu(\D x)\,.
    \end{align}
\end{theorem}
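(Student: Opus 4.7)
The plan is to mimic the structure of the proof of Theorem~\ref{thm:renyi_cvxty_principle}, but exploit a two-stage convexity argument: a joint convexity step followed by a convexity step in one variable, chosen to match which conditional of $\gamma$ we want to appear on the right-hand side. In each case, we first decompose one of $\mu$, $\nu$ as a $\gamma$-mixture, apply joint convexity of $\Hell_q$ to reduce to integrals of quantities in which one argument is a Dirac-propagated measure $\delta_z P$, and then use the appropriate single-argument convexity to reduce all the way down to $\Hell_q(\delta_x P \mmid \delta_y P)$, on which the hypothesis $\Ren_q(\delta_x P \mmid \delta_y P) \le \rho(x,y)$ translates into $(1+\Hell_q)^{1/q}(\delta_x P \mmid \delta_y P) \le \exp\!\bigl(\tfrac{q-1}{q}\,\rho(x,y)\bigr)$ in the first case, and $\Hell_q(\delta_x P \mmid \delta_y P) \le \exp\!\bigl((q-1)\,\rho(x,y)\bigr)-1$ in the second.

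\textbf{Proof of~\eqref{eq:refined_1}.} Fix $\gamma \in \Coup(\mu,\nu)$ and write $\mu = \int \gamma_{1\mid 2}(\cdot \mid y)\,\nu(\D y)$ and $\nu = \int \delta_y \, \nu(\D y)$. By joint convexity of $\Hell_q$ applied to this common mixing measure,
\[
\Hell_q(\mu P \mmid \nu P) \le \int \Hell_q\bigl(\gamma_{1\mid 2}(\cdot\mid y) P \mmid \delta_y P\bigr)\,\nu(\D y)\,.
\]
For each fixed $y$, apply convexity of $(1+\Hell_q)^{1/q}$ in its first argument, using the further decomposition $\gamma_{1\mid 2}(\cdot\mid y) = \int \delta_x \,\gamma_{1\mid 2}(\D x\mid y)$, and the hypothesis to convert the Dirac-level bound into the quantitative form $(1+\Hell_q)^{1/q}(\delta_x P\mmid \delta_y P) \le \exp\!\bigl(\tfrac{q-1}{q}\,\rho(x,y)\bigr)$. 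Raising to the $q$-th power and subtracting $1$ bounds $\Hell_q\bigl(\gamma_{1\mid 2}(\cdot\mid y)P\mmid \delta_y P\bigr)$. Integrating against $\nu(\D y)$ and applying $\Ren_q = \frac{1}{q-1}\log(1+\Hell_q)$ yields~\eqref{eq:refined_1}, after minimizing over $\gamma$.

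\textbf{Proof of~\eqref{eq:refined_2}.} The roles of the two arguments are swapped. Fix $\gamma \in \Coup(\mu,\nu)$ and decompose $\mu = \int \delta_x \, \mu(\D x)$ and $\nu = \int \gamma_{2\mid 1}(\cdot \mid x)\,\mu(\D x)$. Joint convexity of $\Hell_q$ gives
\[
\Hell_q(\mu P \mmid \nu P) \le \int \Hell_q\bigl(\delta_x P \mmid \gamma_{2\mid 1}(\cdot\mid x) P\bigr)\,\mu(\D x)\,.
\]
Now, for each $x$, use convexity of $\Ren_q$ in its second argument together with the decomposition $\gamma_{2\mid 1}(\cdot\mid x) = \int \delta_y\,\gamma_{2\mid 1}(\D y\mid x)$ and the hypothesis to get $\Ren_q(\delta_x P \mmid \gamma_{2\mid 1}(\cdot\mid x) P) \le \int \rho(x,y)\,\gamma_{2\mid 1}(\D y\mid x)$. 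Converting back via $\Hell_q \le \exp((q-1)\Ren_q)-1$, substituting, and applying $\Ren_q = \tfrac{1}{q-1}\log(1+\Hell_q)$ once more produces~\eqref{eq:refined_2} after optimizing over $\gamma$.

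The main delicacy — already encountered in Theorem~\ref{thm:renyi_cvxty_principle} but more acute here — is being careful about which convexity is being used at each stage. The joint convexity of $\Hell_q$ is needed at the outer layer because the two arguments of the divergence are built from different measures (one is $\delta_{\bullet}P$-type, the other is a nontrivial mixture); the single-argument convexities ($L^q(\nu)$ convexity in the first slot, operator convexity in the second) then let us push the remaining mixture inside. The passage between $\Hell_q$ and $\Ren_q$ at the end is routine since $s \mapsto \frac{1}{q-1}\log(1+s)$ is monotone for $q>1$.
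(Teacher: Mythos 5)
Your proof is correct and follows essentially the same route as the paper's: the same decompositions of $\mu P$ and $\nu P$ as $\gamma$-mixtures, an outer application of joint convexity of $\Hell_q$, and then the same two single-argument convexity facts (convexity of ${(\Hell_q+1)}^{1/q}$ in the first argument for~\eqref{eq:refined_1}, convexity of $\Ren_q$ in the second argument for~\eqref{eq:refined_2}), with the final passage through $\Ren_q = \frac{1}{q-1}\log(1+\Hell_q)$. The only blemish is the label ``operator convexity'' in your closing remark---the relevant fact is simply convexity of $\nu\mapsto\Ren_q(\mu\mmid\nu)$---but this does not affect the argument.
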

\begin{proof}
    Let $\gamma \in \Coup(\mu,\nu)$.
    For the first inequality, we write $\mu P = \int \delta_x P \,\gamma_{1\mid 2}(\D x \mid y)\,\nu(\D y)$, so that
    \begin{align*}
        (\Hell_q + 1)(\mu P \mmid \nu P)
        &\le \int (\Hell_q+1)\Bigl( \int \delta_x P \,\gamma_{1\mid 2}(\D x \mid y)\Bigm\Vert \delta_y P\Bigr) \, \nu(\D y) \\
        &\le \int \Bigl\{{(\Hell_q+1)}^{1/q}\Bigl( \int \delta_x P \,\gamma_{1\mid 2}(\D x \mid y)\Bigm\Vert \delta_y P\Bigr)\Bigr\}^q \, \nu(\D y) \\
        &\le \int \Bigl\{ \int {(\Hell_q+1)}^{1/q}(\delta_x P \mmid \delta_y P)\,\gamma_{1\mid 2}(\D x \mid y)\Bigr\}^q \, \nu(\D y)\,.
    \end{align*}
    For the second inequality, we write $\nu P = \int \delta_y P \,\gamma_{2\mid 1}(\D y \mid x)\,\mu(\D x)$, so that
    \begin{align*}
        (\Hell_q + 1)(\mu P \mmid \nu P)
        &\le \int (\Hell_q+1)\Bigl(\delta_x P \Bigm\Vert \int \delta_y P \,\gamma_{2\mid 1}(\D y \mid x)\Bigr) \, \mu(\D x) \\
        &= \int \exp\Bigl((q-1)\,\Ren_q\Bigl(\delta_x P \Bigm\Vert \int \delta_y P \,\gamma_{2\mid 1}(\D y \mid x)\Bigr)\Bigr) \, \mu(\D x) \\
        &\le \int \exp\Bigl((q-1)\int \Ren_q(\delta_x P \mmid \delta_y P) \,\gamma_{2\mid 1}(\D y \mid x)\Bigr) \, \mu(\D x)\,.
    \end{align*}
    The inequalities in the theorem statement follow.
\end{proof}

We provide dual versions of these arguments in \S\ref{app:dist_harnack}.

\subsection{Application to the Langevin diffusion}\label{ssec:disc:langevin}

Here we illustrate how the techniques developed in \S\ref{ssec:disc:scp}, \S\ref{ssec:disc:reduction}, \S\ref{ssec:disc:convexity} immediately yield tight regularity bounds for the Langevin SDE.
We discuss tightness of the bounds in \S\ref{app:tightness}.

\par In what follows, let ${(P_t)}_{t\ge 0}$ denote the semigroup corresponding to the Langevin SDE
\begin{align*}
    \D X_t = - \nabla V(X_t) \, \D t + \sqrt{2} \,\D B_t\,,
\end{align*}
where $B$ is a standard Brownian motion. It is a classical fact that under minimal assumptions, the law of $X_t$ converges to $\pi \propto \exp(-V)$, see e.g.,~\cite{bakry2014analysis} for background. Let $\hat P_h$ denote the Markov kernel corresponding to the discretized Langevin SDE with time step $h > 0$, i.e., $\hat P_h(x,\cdot) = Q_{2h}(x - h\,\nabla V(x), \cdot)$ where ${(Q_t)}_{t \geq 0}$ denotes the heat semigroup.

\begin{theorem}[Discrete-time regularity of Langevin]\label{thm:langevin-disc}
    Suppose that $\alpha I \preceq \nabla^2 V \preceq \beta I$ on $\R^d$. Define the shorthand $L  \deq  \max_{\lambda \in \{\alpha,\beta\}} |1 - h \lambda|$. Then
    \begin{align*}
        \KL(\mu \hat{P}_h^N \mmid \nu \hat{P}_h^N) \leq \frac{1-L^2}{4h\,(L^{-2N}-1)}\, W_2^2(\mu,\nu)
    \end{align*}
    and for any $q \in (0,1) \cup (1, \infty)$,
    \begin{align*}
        \Ren_q(\mu \hat{P}_h^N \mmid \nu \hat{P}_h^N) \leq \inf_{\gamma \in \Coup(\mu,\nu)} \frac{1}{q-1} \log \int \exp \Bigl( \frac{q\,(q-1)\,(1-L^2)}{4h\,(L^{-2N}-1)}\,\|x-y\|^2 \Bigr)\, \gamma(\D x, \D y)\,.
    \end{align*}
\end{theorem}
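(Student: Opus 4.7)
The plan is to apply the two main results of the preceding subsections in sequence: the one-step-to-multi-step reduction (Theorem~\ref{thm:discrete-main}) followed by the convexity principle (Theorem~\ref{thm:renyi_cvxty_principle}), after verifying the hypotheses of Theorem~\ref{thm:discrete-main} for the discretized Langevin kernel $\hat P_h$. Since $\hat P_h(x,\cdot) = \cN(T(x),\, 2hI)$ with $T(x) \deq x - h\,\nabla V(x)$, both hypotheses reduce to Lipschitz properties of the forward-Euler drift map $T$, whose Jacobian $I - h\,\nabla^2 V$ has eigenvalues in $[1-h\beta,\, 1-h\alpha]$ by the curvature assumption; hence $T$ is $L$-Lipschitz.

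First I would verify the one-step regularity bound. The Gaussian identity in Theorem~\ref{thm:renyi_prop} gives, for every $q > 0$,
\[
\Ren_q(\delta_x \hat P_h \mmid \delta_y \hat P_h) = \frac{q\,\norm{T(x)-T(y)}^2}{4h} \le \frac{qL^2}{4h}\,\norm{x-y}^2\,,
\]
so hypothesis~(a) of Theorem~\ref{thm:discrete-main} holds with $c = \frac{qL^2}{4h}$; the same expression at $q=1$ also dispenses with the auxiliary technical requirement used in that theorem when $q<1$. For hypothesis~(b), the synchronous coupling $(T(X)+\sqrt{2h}\,\xi,\, T(Y)+\sqrt{2h}\,\xi)$ with shared $\xi\sim\cN(0,I)$ gives $W_\infty(\delta_x \hat P_h,\delta_y \hat P_h) \le L\,\norm{x-y}$, and this extends to arbitrary $\mu,\nu$ by the standard coupling reduction noted after Theorem~\ref{thm:discrete-main}, yielding $W_\infty(\mu \hat P_h,\, \nu \hat P_h) \le L\,W_\infty(\mu,\nu)$.

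Plugging these constants into Theorem~\ref{thm:discrete-main} and using the identity $L^2\,(L^{-2}-1) = 1-L^2$ yields the Dirac-initialized multi-step bound
\[
\Ren_q(\delta_x \hat P_h^N \mmid \delta_y \hat P_h^N) \le \frac{q\,(1-L^2)}{4h\,(L^{-2N}-1)}\,\norm{x-y}^2\,.
\]
Finally, I would invoke Theorem~\ref{thm:renyi_cvxty_principle}: for $q=1$, part~(1) applied with $\rho(x,y) = \frac{1-L^2}{4h\,(L^{-2N}-1)}\,\norm{x-y}^2$ gives an infimum of $\int\rho\,\D\gamma$ over couplings, which by definition of $W_2^2$ yields the stated KL bound; for $q\in(0,1)\cup(1,\infty)$, part~(2) applied with $\rho(x,y) = \frac{q\,(1-L^2)}{4h\,(L^{-2N}-1)}\,\norm{x-y}^2$ directly produces the stated R\'enyi bound. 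No step presents a serious obstacle---once the machinery of \S\ref{ssec:disc:reduction} and \S\ref{ssec:disc:convexity} is in place, the entire argument reduces to bookkeeping on the Gaussian and Lipschitz constants of $T$.
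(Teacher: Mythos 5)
Your proof is correct and follows essentially the same route as the paper: verify the one-step Gaussian regularity bound $c = \frac{qL^2}{4h}$ and the $W_\infty$-Lipschitz property of $\hat P_h$ via the $L$-Lipschitz drift map, invoke Theorem~\ref{thm:discrete-main} for the Dirac-initialized multi-step bound, and then upgrade to arbitrary initializations via Theorem~\ref{thm:renyi_cvxty_principle}. Your additional observation that the Gaussian identity at $q=1$ automatically supplies the technical hypothesis needed when $q<1$ is a nice explicit touch that the paper leaves implicit.
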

\begin{proof}
    It suffices to prove the discrete-time R\'enyi regularity bound between Dirac initializations:
    \begin{align}
        \Ren_q(\delta_x \hat{P}_h^N \mmid \delta_y \hat{P}_h^N) \leq \frac{q\, (1-L^2)}{4h\,(L^{-2N} -1)}\, \|x-y\|^2\,.\label{eq:reg-ren-disc-Dirac}
    \end{align}
    Indeed, the claim for arbitrary initializations then follows by the convexity principle in Theorem~\ref{thm:renyi_cvxty_principle}.

    \par To prove~\eqref{eq:reg-ren-disc-Dirac}, we use the one-to-multi-step reduction for regularity bounds (Theorem~\ref{thm:discrete-main}). To this end, we use the elementary fact that $\phi(x)  \deq  x - h\, \nabla V(x)$ is $L$-Lipschitz. Since $\hat{P}_h(x,\cdot) = Q_{2h}(\phi(x),\cdot)$, it follows that
    \begin{itemize}
        \item[(a)] $\hat{P}_h$ satisfies the $1$-step regularity bound~\eqref{eq:one_step_reg} with parameter $c \defeq \frac{qL^2}{4h}$. This follows by the identity for the R\'enyi divergence between Gaussians (Proposition~\ref{thm:renyi_prop}) and the Lipschitzness of the mapping $\phi$:
        \begin{align}
            \Ren_q (\delta_x \hat{P}_h \mmid \delta_y \hat{P}_h)
            = \frac{q\,\|\phi(x)-\phi(y)\|^2}{4h}
            \leq \frac{qL^2\,\|x-y\|^2}{4h}\,.\label{eq:langevin-1}
        \end{align}
        \item[(b)] $\hat{P}_h$ is $W_{\infty}$-Lipschitz with parameter $L$. This follows from a trivial coupling argument:
        \begin{align}
            W_{\infty}(\mu \hat{P}_h, \nu \hat{P}_h)
            \leq
            W_{\infty}(\phi_{\#} \mu, \phi_{\#} \nu)
            \leq
            L\, W_{\infty}(\mu,\nu)\,.
            \label{eq:langevin-2}
        \end{align}
    \end{itemize}
    Thus we may invoke Theorem~\ref{thm:discrete-main}. This proves the claim~\eqref{eq:reg-ren-disc-Dirac}. 
\end{proof}

This tight regularity result for the discretized Langevin semigroup immediately implies the following tight regularity result for the (standard, continuous-time) Langevin semigroup by taking the limit as the step size $h \searrow 0$ and the total elapsed continuous time is fixed to $T = Nh$. 

\begin{cor}[Continuous-time regularity of Langevin]\label{cor:langevin-cont}
    Suppose that $\alpha I \preceq \nabla^2 V$ on $\R^d$. Then
    \begin{align}\label{eq:langevin_kl}
        \KL(\mu P_T \mmid \nu P_T ) \leq \frac{\alpha}{2\,(\exp(2\alpha T) - 1)}\, W_2^2(\mu,\nu)
    \end{align}
    and for any $q \in (0,1) \cup (1, \infty)$,
    \begin{align}\label{eq:langevin_renyi}
        \Ren_q(\mu P_T  \mmid \nu P_T ) \leq \inf_{\gamma \in \Coup(\mu,\nu)} \frac{1}{q-1} \log \int \exp \Bigl( \frac{\alpha q\,(q-1)}{2\,(\exp(2\alpha T) - 1)}\,\|x-y\|^2 \Bigr)\, \gamma(\D x, \D y)\,.
    \end{align}
\end{cor}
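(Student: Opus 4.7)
The plan is to deduce Corollary~\ref{cor:langevin-cont} from Theorem~\ref{thm:langevin-disc} by sending the step size $h \searrow 0$ with the elapsed time $T = Nh$ held fixed. Theorem~\ref{thm:langevin-disc} assumes a two-sided Hessian bound $\alpha I \preceq \nabla^2 V \preceq \beta I$, whereas Corollary~\ref{cor:langevin-cont} requires only the lower bound; however, the dependence on $\beta$ drops out in the continuous-time limit. I would therefore first establish the corollary under the two-sided assumption, then remove the upper bound $\beta$ by an approximation argument.

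First I would compute the limit of the constant $\frac{1 - L^2}{4h\,(L^{-2N} - 1)}$ from Theorem~\ref{thm:langevin-disc}. For all $h > 0$ sufficiently small (depending on $\alpha,\beta$), one has $1 - h\lambda > 0$ for every $\lambda \in [\alpha,\beta]$, so $|1 - h\lambda|$ is decreasing in $\lambda$ and hence $L = 1 - h\alpha$. A Taylor expansion yields $1 - L^2 = 2h\alpha - h^2 \alpha^2$, and the classical limit $(1-h\alpha)^{-T/h} \to \exp(\alpha T)$ gives
\[
\frac{1 - L^2}{4h\,(L^{-2N}-1)} \;\xrightarrow[h\searrow 0]{}\; \frac{\alpha}{2\,(\exp(2\alpha T) - 1)}\,,
\]
and analogously for the R\'enyi constant up to the extra factor of $q$.

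Next I would invoke the standard weak convergence of the Euler--Maruyama scheme: under the two-sided Hessian bound, for any fixed initial law $\mu$ (and likewise $\nu$), one has $\mu \hat{P}_h^N \Rightarrow \mu P_T$ weakly as $h \searrow 0$ with $Nh = T$. Since the KL divergence is jointly lower semi-continuous in the weak topology, taking $\liminf_{h \searrow 0}$ on both sides of the discrete-time bound yields~\eqref{eq:langevin_kl}. For~\eqref{eq:langevin_renyi}, for each fixed coupling $\gamma$ the integrand on the right-hand side of Theorem~\ref{thm:langevin-disc} converges pointwise and is uniformly dominated for $h$ in a neighborhood of $0$; dominated convergence together with joint lower semi-continuity of $\Ren_q$ then produces~\eqref{eq:langevin_renyi} after optimizing over $\gamma$.

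Finally, to remove the upper Hessian bound $\beta$, I would approximate $V$ by a sequence of smooth potentials $V_k$ with $\alpha I \preceq \nabla^2 V_k \preceq \beta_k I$, for example by infimal convolution with a suitable quadratic (which preserves $\alpha$-convexity) or by a smooth modification of $\nabla^2 V$ outside a large ball. Applying the already-established bound for each $V_k$ and passing to the limit, using that the associated Langevin semigroups converge weakly to those of $V$ together with lower semi-continuity of the divergences, completes the proof. The main obstacle lies in this last step: one must construct the approximation so as to preserve $\alpha$-convexity while securing weak convergence of the semigroups. This is largely standard (cf.~\cite{bakry2014analysis}) but does require some care.
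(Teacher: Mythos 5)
Your proposal is correct and follows essentially the same route as the paper, which simply passes to the limit $h\searrow 0$ with $T=Nh$ fixed in Theorem~\ref{thm:langevin-disc}, computes the limiting constant, and notes that the $\beta$-dependence vanishes; your limit of the constant and the use of weak convergence of the Euler--Maruyama scheme together with lower semicontinuity of $\KL$ and $\Ren_q$ is exactly the intended argument (the only cosmetic point being that for $q>1$ the exponential integrand is handled more cleanly by bounding $c_h\le(1+\varepsilon)\,c_0$ for small $h$ and letting $\varepsilon\searrow 0$ by monotone convergence, rather than by a uniform dominating function). Your final step removing the upper Hessian bound is extra care beyond what the paper does, since the paper implicitly retains the smoothness/Lipschitz-gradient assumption needed for well-posedness and just observes that $\beta$ does not enter the limiting bound.
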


Our bounds hold for any value of $\alpha \in \R$, provided that the expressions are interpreted accordingly. Namely, for $\alpha =0$, the occurrences of $\alpha/(\exp(2\alpha T) -1)$ in~\eqref{eq:langevin_kl} and~\eqref{eq:langevin_renyi} simplify to $1/(2T)$. Note that for our discrete-time results, we also need the upper bound $\nabla^2 V \preceq \beta I$ (which is standard for discretization analysis), but the dependence on $\beta$ vanishes as $h\searrow 0$.

We conclude this discussion with a few remarks.

\begin{remark}[Relationship with~\cite{AltTal23Langevin}]
    Theorem~\ref{thm:langevin-disc} (for $q \geq 1$) recovers the main result of~\cite{AltTal23Langevin} (see Remark A.4 therein) and strengthens it beyond $W_{\infty}$.
    We discuss the relationship of our work with the extant literature on differential privacy and sampling in \S\ref{ssec:discrete:dp}.
\end{remark}

\begin{remark}[Finiteness thresholds and refined R\'enyi regularity]\label{rem:finiteness}
    Unlike KL regularity, R\'enyi regularity can undergo a phase transition in which $\Ren_q(\mu P_T \mmid \nu P_T)$ becomes finite only after $T$ surpasses some threshold $T_0 > 0$. In contrast, the KL regularity is finite for arbitrarily small times as soon as the initial measures have finite second moment.
    \par The R\'enyi regularity bounds in Theorem~\ref{thm:langevin-disc} and Corollary~\ref{cor:langevin-cont}---while often exact for the OU process and its discretization, see \S\ref{app:tightness}---sometimes fail to tightly capture this finiteness threshold, in which case we turn toward the refined bounds of Theorem~\ref{thm:renyi_cvxty_principle_refined}.
    See \S\ref{app:tightness:finiteness}, where we explore the sharpness of the bounds on the finiteness threshold through various examples.
\end{remark}

\begin{theorem}[Refined R\'enyi regularity for Langevin]\label{thm:langevin-refined}
    Suppose $\alpha I \preceq \nabla^2 V$ on $\R^d$. For any $q > 1$,
     \begin{align}
        \Ren_q(\mu P_T \mmid \nu P_T)
        &\le \inf_{\gamma\in\Coup(\mu,\nu)} \frac{1}{q-1} \log \int\biggl( \int \exp\Bigl(\frac{\alpha\,(q-1)}{2\,(\exp(2 \alpha T) - 1)}\, \|x-y\|^2 \Bigr)\,\gamma_{1\mid 2}(\D x \mid y)\biggr)^q \, \nu(\D y)\,,\label{eq:langevin-regularity-refined-1}
    \end{align}
    and
    \begin{align}
        \Ren_q(\mu P_T \mmid \nu P_T)
        &\le \inf_{\gamma\in\Coup(\mu,\nu)} \frac{1}{q-1} \log\int \exp\biggl(\frac{\alpha q\,(q-1)}{2\,(\exp(2\alpha T) - 1)}\int \|x-y\|^2 \,\gamma_{2\mid 1}(\D y \mid x)\biggr) \, \mu(\D x)\,.\label{eq:langevin-regularity-refined-2}
    \end{align}
\end{theorem}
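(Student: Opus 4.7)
The plan is to derive both refined bounds by applying the refined convexity principle (Theorem~\ref{thm:renyi_cvxty_principle_refined}) to the discrete-time Dirac-to-Dirac regularity bound~\eqref{eq:reg-ren-disc-Dirac}, and then to pass to the continuous-time limit $h\searrow 0$ with $T=Nh$ fixed---exactly the route used to obtain Corollary~\ref{cor:langevin-cont} from Theorem~\ref{thm:langevin-disc}.

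For the first step, the proof of Theorem~\ref{thm:langevin-disc} has already shown that the discretized Langevin kernel $\hat P_h^N$ satisfies
\begin{align*}
\Ren_q(\delta_x\hat P_h^N \mmid \delta_y\hat P_h^N) \le \rho_{h,N}(x,y) \deq \frac{q\,(1-L^2)}{4h\,(L^{-2N}-1)}\,\norm{x-y}^2\,.
\end{align*}
Plugging this $\rho_{h,N}$ into the two refinements~\eqref{eq:refined_1} and~\eqref{eq:refined_2} of Theorem~\ref{thm:renyi_cvxty_principle_refined} applied to the kernel $\hat P_h^N$ immediately produces discrete-time analogues of~\eqref{eq:langevin-regularity-refined-1} and~\eqref{eq:langevin-regularity-refined-2}, with the constant $\frac{\alpha}{2\,(\exp(2\alpha T)-1)}$ replaced throughout by $\frac{1-L^2}{4h\,(L^{-2N}-1)}$.

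For the second step, I fix $\gamma\in\Coup(\mu,\nu)$ and let $h\searrow 0$ with $Nh=T$. As in the derivation of Corollary~\ref{cor:langevin-cont}, Taylor expansion gives $L^2=1-2h\alpha+O(h^2)$ and $(1-h\alpha)^{-2N}\to\exp(2\alpha T)$, so $\frac{1-L^2}{4h\,(L^{-2N}-1)}\to \frac{\alpha}{2\,(\exp(2\alpha T)-1)}$; hence the discrete right-hand sides converge to the continuous ones by dominated convergence on the exponential integrands (restricting to those $\gamma$ for which the target expression is finite, which suffices for the infimum). On the left-hand side, $\mu\hat P_h^N\to \mu P_T$ and $\nu\hat P_h^N\to \nu P_T$ weakly by standard Euler--Maruyama convergence under $\nabla^2 V\succeq\alpha I$, and $\Ren_q$ is lower semi-continuous under weak convergence, so
\begin{align*}
\Ren_q(\mu P_T\mmid\nu P_T) \le \liminf_{h\searrow 0} \Ren_q(\mu\hat P_h^N\mmid\nu\hat P_h^N) \le \text{(continuous RHS at $\gamma$)}\,.
\end{align*}
Taking $\inf$ over $\gamma$ completes the proof.

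The main delicacy will be this limit passage. Ordering the steps as ``fix $\gamma$, let $h\searrow 0$, then take $\inf_\gamma$'' avoids any interchange of limit and infimum (which would otherwise require a compactness/uniformity argument over couplings), and reduces the analytic content to the two standard facts above. The assumption $\alpha I\preceq\nabla^2 V$ without a global upper Hessian bound---needed to invoke Theorem~\ref{thm:langevin-disc}---can be accommodated via a standard localization or mollification of $V$, since the discrete bound holds for \emph{any} admissible $\beta$ and the $\beta$-dependence disappears in the limit $h\searrow 0$.
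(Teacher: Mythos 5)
Your route is genuinely different from the paper's, and in essence it works, but it is more laborious and has one soft spot. The paper's proof is a one-liner: since Theorem~\ref{thm:renyi_cvxty_principle_refined} holds for an arbitrary Markov kernel on a Polish space, it is applied \emph{directly to the continuous-time kernel $P_T$}, with $\rho(x,y)=\frac{\alpha q\,\norm{x-y}^2}{2\,(\exp(2\alpha T)-1)}$ supplied by specializing the already-proved bound~\eqref{eq:langevin_renyi} of Corollary~\ref{cor:langevin-cont} to Dirac initializations. In other words, the $h\searrow 0$ passage is carried out once and for all at the level of Dirac-to-Dirac bounds (where it only involves convergence of scalar constants and lower semicontinuity of $\Ren_q$), and the refinement is performed afterwards in continuous time. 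You instead refine at the discrete level (Theorem~\ref{thm:renyi_cvxty_principle_refined} applied to $\hat P_h^N$ with the constant from Theorem~\ref{thm:langevin-disc}) and then take $h\searrow 0$ of the full distributional right-hand sides, which forces you to exchange a limit with integrals of $\exp(c_h\,\norm{x-y}^2)$ against a fixed coupling $\gamma$.

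That exchange is where your justification is not quite right. Write $c_h\deq\frac{1-L^2}{4h\,(L^{-2N}-1)}$ and $c\deq\frac{\alpha}{2\,(\exp(2\alpha T)-1)}$. Finiteness of the target expression (the integral at constant $c$) furnishes a dominating function only when $c_h\le c$; one checks this holds for $\alpha\ge 0$ (there $c_h\uparrow c$, and in fact plain monotonicity suffices), but for $\alpha<0$ the discrete constant approaches $c$ \emph{from above}. Hence a coupling $\gamma$ sitting exactly at the finiteness threshold, i.e.\ with $\int\exp(c\,\norm{x-y}^2)\,\D\gamma<\infty$ but $\int\exp(c'\,\norm{x-y}^2)\,\D\gamma=\infty$ for every $c'>c$, makes every discrete right-hand side infinite, and your chain of inequalities yields nothing for that $\gamma$. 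Away from the threshold the argument is fine (dominate by the integrand at a slightly larger constant), so what you actually establish is the theorem with the infimum restricted to non-threshold couplings; the cleanest repair is precisely the paper's ordering, which removes any need for convergence of the distributional right-hand sides. Your closing remark about dispensing with the upper bound $\nabla^2 V\preceq\beta I$ by mollification is not an extra cost of your route: the same issue is already implicit in Corollary~\ref{cor:langevin-cont}, whose proof likewise lets the $\beta$-dependence vanish as $h\searrow 0$.
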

\begin{proof}
    Specialize the R\'enyi regularity bound~\eqref{eq:langevin_renyi} to Dirac initializations and apply the refined convexity principle for R\'enyi divergences (Theorem~\ref{thm:renyi_cvxty_principle_refined}).
\end{proof}

\subsection{Discussion: relationship with differential privacy and sampling}\label{ssec:discrete:dp}

Our development is motivated by the ``shifted divergence'' technique from differential privacy~\cite{pabi}, and in particular the modified version~\cite{AltTal22dp} which was recently used to established discrete mixing bounds~\cite{AltChe23warm, AltTal23Langevin}.
Briefly, that argument bounds the divergence $\cD$ (typically KL or R\'enyi) between the laws of two stochastic processes which evolve through the iterative, alternating application of additive noise (typically Gaussian) and a Lipschitz map (typically a step of gradient descent). That is, these arguments prove regularity results in the setting that $P = Q_1 Q_2$ where $Q_1$ is a convolution kernel and $Q_2$ is Wasserstein-Lipschitz~\cite{AltTal22dp, AltChe23warm, AltTal23Langevin}. The argument uses as a Lyapunov function the shifted divergence
\begin{align*}
    \cD^{(z)}(\mu \mmid \nu)  \deq  \inf_{\mu' \; : \; W_{\infty}(\mu,\mu') \leq z} \cD(\mu' \mmid \nu)\,,
\end{align*}
where $z \geq 0$ is a non-negative ``shift'' that allows changing the argument to the divergence in $W_{\infty}$ distance. The argument is
based on two key lemmas which track how this shifted divergence is affected by either additive noise $Q_1$ or a Wasserstein-Lipschitz kernel $Q_2$. Our framework generalizes, refines, and unifies this shifted divergence argument.

\paragraph*{Generality.} We identify the shifted composition rule (Theorem~\ref{thm:scr}) as the key information-theoretic principle that underlies the shifted divergence argument.
An important advantage of our level of generality is that our argument is no longer restricted to Markov kernels corresponding to additive noise. This generality is already manifest in Theorem~\ref{thm:discrete-main} which does not require decomposition of $P$ into the form $Q_1 Q_2$; and this freedom to choose more general ``shifts'' will be further exploited in future works in this series. 

\paragraph*{Refinement.} The convexity principle (\S\ref{ssec:disc:convexity}) yields improved regularity results (a.k.a., reverse transport inequalities) in which the Wasserstein distance is relaxed from $W_{\infty}$---e.g., KL bounds in terms of the initial $2$-Wasserstein distance $W_2$, and R\'enyi bounds in terms of the initial Orlicz--Wasserstein distance $W_{\psi_2}$. Although simple in hindsight, obtaining results beyond $W_{\infty}$ was a barrier in the literature for both differential privacy and sampling. For instance, this immediately improves and simplifies the discrete mixing results of discretized Langevin in both the overdamped~\cite{AltTal23Langevin} and underdamped settings~\cite{AltChe23warm}. For the previous, the improvement is for $W_{\infty}$ to $W_2$ or $W_{\psi_2}$ (corresponding to KL or R\'enyi, respectively); and for the latter this new argument further improves the constants in the regularity bound. 

\paragraph*{Unification.} Although the literature on differential privacy and sampling and the literature on diffusions have both sought to prove mixing/regularity bounds for stochastic processes, a high-level difference is that the former analyzes discrete-time processes while the latter analyzes continuous-time ones.
In this paper, we bridge the techniques from these communities by
constructing ``shifted processes'' that 1) explicitly realize the optimal shifts that are implicit in the shifted divergence argument, 2) extend to the continuous-time arguments of~\cite{ArnThaWan06HarnackCurvUnbdd} in the limit (details in \S\ref{sec:cont}).

\section{Continuous-time arguments}\label{sec:cont}

In this section, we develop the continuous-time analogues of the proofs in \S\ref{sec:disc}; in particular, \S\ref{sec:cont:sync} develops the analogue of the synchronous coupling via Girsanov transformation, and \S\ref{sec:cont:opt} develops the analogue of the Wasserstein coupling via Otto calculus.
As we discuss below, the coupling in \S\ref{sec:cont:sync} was previously introduced in~\cite{ArnThaWan06HarnackCurvUnbdd} and subsequently used extensively in the literature, but the argument in \S\ref{sec:cont:opt} seems to be new.

For simplicity, we illustrate the techniques on the special case of the Langevin diffusion with semi-convex potential, i.e., $\nabla^2 V \succeq \alpha I$ for some $\alpha \in \R$.
We also assume that $\nabla V$ is Lipschitz continuous which, in light of the previous assumption, amounts to an upper bound on $\nabla^2 V$, although the upper bound does not enter into our quantitative results.
The Lipschitz continuity assumption is made for simplicity, to ensure that there is a unique strong solution to the Langevin SDE which is non-explosive.
Later, in \S\ref{ssec:ext:ito}, we consider the more general setting of uniformly elliptic It\^o diffusions.

\subsection{Synchronous coupling and Girsanov's theorem}\label{sec:cont:sync}

Fix $x,y\in\R^d$ and recall that our goal is to prove a bound on $\Ren_q(\delta_x P_T \mmid \delta_y P_T)$, where ${(P_t)}_{t\ge 0}$ is the Langevin semigroup (and $q=1$ corresponds to $\Ren_q = \KL$).
The natural continuous-time analogue of the synchronous coupling in \S\ref{sec:disc} is to first define the processes
\begin{align*}
    \D X_t
    &= -\nabla V(X_t) \,\D t + \sqrt 2 \, \D B_t\,, &X_0 &= x\,, \\
    \D Y_t
    &= -\nabla V(Y_t) \, \D t + \sqrt 2 \, \D B_t\,, &Y_0 &= y\,,
\end{align*}
so that $\law(X_T) = \delta_x P_T$ and $\law(Y_T) = \delta_y P_T$.
However, instead of bounding the divergence between the laws of $\{X_t\}_{t\in [0,T]}$ and $\{Y_t\}_{t\in [0,T]}$, we instead introduce an auxiliary process $\{X_t'\}_{t\in [0,T]}$ such that $X_T' = Y_T$ almost surely (hence $\law(X_T') = \law(Y_T)$) of the form
\begin{align}\label{eq:sync_aux_cont}
    \D X_t'
    &= \{-\nabla V(X_t') +\eta_t\,(Y_t - X_t')\} \, \D t + \sqrt 2\,\D B_t\,, \qquad X_0' = x\,,
\end{align}
where $\{\eta_t\}_{t\in [0,T]}$ is a deterministic and non-negative process.
The divergence between the laws of $\{X_t\}_{t\in [0,T]}$ and $\{X_t'\}_{t\in [0,T]}$ can then be bounded by Girsanov's theorem.
Such a coupling was first introduced in~\cite{ArnThaWan06HarnackCurvUnbdd} and subsequently used to establish Harnack and reverse transport inequalities for a bevy of settings, including for diffusions on Riemannian manifolds~\cite{ArnThaWan09HarnackNoncompact, Wang14Diffusion}, for diffusions with multiplicative noise~\cite{Wang11HarnackMultNoise, WanYua11HarnackFuncMult}, under low regularity~\cite{Shao13HarnackSingular, HuaZha19HarnackDini, ZhaYua21Zvonkin}, for SPDEs~\cite{Wang07HarnackPorous, LiuWan08HarnackFastDiffusion, DaPRocWan09HilbertHarnack, EsSvonSch09HarnackMemory, Liu09HarnackMonotone, Zhang10SPDEsReflectionHarnack, Ouy11HarnackMultivalued, WanYua11HarnackFuncMult, Wang13HarnackSPDE, WanZha13DerivDegen}, for distribution-dependent processes~\cite{Wang18Landau, HuaWan19DistDepSingular, HuaWan22SingularMKV}, for jump processes~\cite{Wang11CouplingJumps, OuyRocWan12HarnackOUJump, WanZha15HarnackStable}, and for SDEs driven by fractional Brownian motion~\cite{Fan15FBM}; see~\cite{Wang12Coupling} for a survey.
For completeness, we sketch the argument below, focusing on the KL divergence bound for simplicity. The extension to R\'enyi divergences is given in \S\ref{app:renyi}.

There are few other ways to construct this auxiliary processes. Related synchronous constructions are discussed briefly at the end of this section, and in the next section, we show that the Wasserstein coupling approach of \S\ref{sec:disc} leads to a distinct continuous-time interpretation.

\paragraph*{Reverse transport inequality for the KL divergence.}
Since our final bound will depend only on $\norm{x-y}$, it does not matter whether we bound $\KL(\delta_x P_T \mmid \delta_y P_T)$ or $\KL(\delta_y P_T \mmid \delta_x P_T)$; for convenience we bound the latter.
The key idea is to realize the auxiliary process~\eqref{eq:sync_aux_cont} via a Girsanov transformation of the Wiener measure.
To do so, let $\{B_t'\}_{t\ge 0}$ be a standard Brownian motion under the path measure $\PathAux_T$ on $\mc C([0,T];\R^d)$ and consider the solution to the coupled system of SDEs
\begin{align}\label{eq:sync_cont_coupled}
\begin{aligned}
    \D X_t
    &= \{-\nabla V(X_t) + \eta_t\,(Y_t - X_t)\}\, \D t + \sqrt 2 \, \D B_t'\,, \quad &X_0 &= x\,, \\
    \D Y_t
    &= -\nabla V(Y_t) \, \D t + \sqrt 2 \, \D B_t'\,, \quad &Y_0 &= y\,.
\end{aligned}
\end{align}
Then, let $\{B_t\}_{t\in [0,T]}$ be such that
\begin{align*}
    \D X_t
    &= -\nabla V(X_t) \, \D t + \sqrt 2 \,\D B_t\,,
\end{align*}
i.e., $\D B_t = \D B_t' + \frac{\eta_t}{\sqrt 2}\,(X_t - Y_t) \, \D t$.
If we define the $\PathAux_T$-martingale $t\mapsto M_t \deq -\int_0^t \frac{\eta_s}{\sqrt 2}\,\langle X_s - Y_s, \D B_s'\rangle$, and if $\exp(M - \frac{1}{2}\,[M,M])$ is a martingale (rather than merely a local martingale), Girsanov's theorem (see~\cite[Theorem 5.22]{legall2016stochasticcalc}) ensures that under the path measure $\PathA_T$ defined via
\begin{align}\label{eq:sync_cont_girsanov}
    \frac{\D \PathA_T}{\D\PathAux_T}
    &= \exp\bigl(M_T - \frac{1}{2}\,{[M,M]}_T\bigr)\,,
\end{align}
the process $\{B_t\}_{t\in [0,T]}$ is a standard Brownian motion.
These path measures are defined so that under $\PathA_T$, $\law(X_T) = \delta_x P_T$, whereas under $\PathAux_T$, if $X_T = Y_T$ almost surely, then $\law(X_T) = \delta_y P_T$.
It follows that
\begin{align}\label{eq:sync_cont_kl}
    \KL(\delta_y P_T \mmid \delta_x P_T)
    &\le \KL(\PathAux_T \mmid \PathA_T)
    = -\E_{\PathAux_T} \log \frac{\D\PathA_T}{\D\PathAux_T}
    = \frac{1}{4}\, \E_{\PathAux_T}\int_0^T \eta_t^2 \,\norm{X_t-Y_t}^2 \, \D t\,.
\end{align}

Next, since
\begin{align*}
    \D (X_t - Y_t)
    &= \{-\nabla V(X_t) + \nabla V(Y_t) + \eta_t\,(Y_t - X_t)\}\, \D t\,,
\end{align*}
hence by It\^o's formula and semi-convexity,
\begin{align*}
    \D \norm{X_t - Y_t}^2
    &= -2\,\langle X_t - Y_t, \nabla V(X_t) - \nabla V(Y_t) + \eta_t\,(X_t - Y_t)\rangle \, \D t
    \le -2\,(\alpha + \eta_t)\,\norm{X_t - Y_t}^2 \, \D t\,,
\end{align*}
and therefore by Gr\"onwall's lemma,
\begin{align}\label{eq:sync_cont_dist}
    \norm{X_t - Y_t}^2
    &\le \exp\Bigl(-2\alpha t - 2\int_0^t \eta_s \, \D s\Bigr)\,\norm{x-y}^2\,.
\end{align}
Substituting this into~\eqref{eq:sync_cont_kl}, we find that
\begin{align}\label{eq:sync_cont_kl_2}
    \KL(\delta_y P_T \mmid \delta_x P_T)
    &\le \frac{\norm{x-y}^2}{4} \int_0^T \eta_t^2 \exp\Bigl(-2\alpha t - 2\int_0^t \eta_s \, \D s\Bigr) \, \D t\,.
\end{align}
We now make the optimal choice $\eta_t = 2\alpha/\{\exp(2\alpha\,(T-t))-1\}$ (this should be interpreted as $\eta_t = 1/(T-t)$ when $\alpha = 0$); in \S\ref{app:calc}, we show how this expression can be derived using the calculus of variations.
With this choice, $\int_0^t \eta_s \, \D s = \log \frac{1-\exp(-2\alpha T)}{1-\exp(-2\alpha\,(T-t))}$ (or $\log\frac{T}{T-t}$ for $\alpha = 0$).
In particular, from~\eqref{eq:sync_cont_dist}, we see that $X_t - Y_t \to 0$ almost surely as $t\nearrow T$, as required.
Finally, substitution into~\eqref{eq:sync_cont_kl_2} establishes the optimal reverse transport inequality
\begin{align*}
    \KL(\delta_y P_T \mmid \delta_x P_T)
    &\le \frac{\alpha\,\norm{x-y}^2}{2\,(\exp(2\alpha T) - 1)}\,,
\end{align*}
up to a few technical details which we address in the subsequent remark.

\begin{remark}[Technical]\label{rmk:sync_cont_technical}
    The above proof sketch is  rigorous aside from a few issues which we discuss here.
    First, since our eventual choice of $\eta_t$ blows up as $t\nearrow T$, the existence and uniqueness of the system~\eqref{eq:sync_cont_coupled} on $[0,T]$ does not follow from the basic theory of SDEs.
    However, the system is well-posed on $[0,T-\varepsilon]$ for every $\varepsilon > 0$.
    Therefore, this issue is easily remedied by noting that $\law_{\PathA_T}(X_t) \to \delta_x P_T$ and $\law_{\PathAux_T}(X_t) \to \delta_y P_T$ weakly as $t\nearrow T$ and appealing to the joint lower semicontinuity of the KL divergence.

    Similarly, in order for~\eqref{eq:sync_cont_girsanov} to define a valid probability measure $\PathA_T$, the Girsanov factor $t\mapsto \mc E_t \deq \exp(M_t - \frac{1}{2}\,{[M,M]}_t)$ must be a valid martingale, which amounts to exponential integrability of the quantity $\int_0^T \norm{X_t-Y_t}^2 \, \D t$.
    However, this can also be avoided by considering a localizing sequence of stopping times ${(\tau_k)}_{k\in\N}$ such that $\tau_k\nearrow \infty$ almost surely, and $X_t$ and $Y_t$ are bounded for $t\le \tau_k$.
    Then, the stopped process $\mc E_{\cdot \wedge \tau_k}$ is a valid martingale for each $k$, and we can again appeal to the joint lower semicontinuity of the KL divergence.
    Since this type of argument is standard in the literature, the details are omitted for brevity.
\end{remark}

We conclude this section with a discussion of related synchronous constructions.

\begin{remark}[Alternative construction]
    In some works (see, e.g.,~\cite{Wang12Coupling}), a slightly different form is considered for the added drift, namely one adds $\eta_t \, \frac{Y_t - X_t'}{\norm{Y_t - X_t'}}$ instead of $\eta_t \, (Y_t - X_t')$.
    Both approaches can be used to derive sharp R\'enyi regularity bounds for the Langevin SDE\@, but as noted in~\cite{Wang11HarnackMultNoise}, it is necessary to consider an unbounded drift as $t\nearrow T$ in order to handle the multiplicative noise case in \S\ref{ssec:ext:ito}.
    For concreteness, we stick with the latter form of the drift.
\end{remark}

\begin{remark}[Coupling with a deterministic shift]
    The coupling in~\eqref{eq:sync_aux_cont} adds a \emph{random} drift to the auxiliary process in order to force it to hit another process by time $T$.
    However, there is another method in which we simply define the auxiliary process to satisfy $X' = X + v$, where $v : [0,T]\to\R^d$ is a deterministic curve.
    To distinguish it from the synchronous coupling, we refer to the latter method as \emph{coupling with a deterministic shift}.
    Coupling with a deterministic shift has been used to derive Bismut-type derivative formulas (c.f.~\cite[\S 1.1.1]{Wang13HarnackSPDE}), which in turn can be used to establish power Harnack inequalities; however, to the best of our knowledge, the resulting Harnack inequalities are typically not sharp, unlike the ones derived via synchronous coupling.
    Interestingly, as noted in~\cite{Wang14ShiftHarnack}, coupling with a deterministic shift yields regularity for Kolmogorov's \emph{forward} equation, in contrast to the Harnack inequalities considered here which encode regularity for Kolmogorov's \emph{backward} equation (see \S\ref{ssec:apps:background} for further discussion).
    The forward regularity problem and its information-theoretic reformulation will be explored in a forthcoming work.
\end{remark}

\begin{remark}[Relationship with the F\"ollmer drift]
    The coupling~\eqref{eq:sync_aux_cont} can be interpreted as follows: we add a drift $t\mapsto b_t \deq \eta_t\,(Y_t - X_t')$ to the Langevin diffusion to ensure that the process has law $\delta_y P_T$ at time $T$.
    By a similar argument based on Girsanov's theorem, any adapted and well-behaved drift ${(b_t)}_{t\in [0,T]}$ with this property leads to the bound 
    \[
    \KL(\delta_y P_T \mmid \delta_x P_T) \le \frac{1}{4} \int_0^T \E[\norm{b_t}^2] \, \D t\,.
    \]
    It is then natural to ask what the \emph{optimal} drift is.
    The answer is the \emph{F\"ollmer drift}, given by $b_t^\star = 2\,\nabla \log P_{T-t}\frac{\D \delta_y P_T}{\D \delta_x P_T}(X_t')$, which makes the above inequality hold with equality~\cite{Fol1985Reversal}.
    Despite the recent success of the F\"ollmer process for establishing functional inequalities (see, e.g.,~\cite{Bor00Diffusion, Lehec13Follmer, CatGui14SemiLC, EldLee18Reg, MikShe21BrownianTransport} and the connection with stochastic localization~\cite{klaput21spectral}) and its appealing optimality property, it seems less tractable for the purpose of establishing reverse transport inequalities, as we are interested in doing here.
\end{remark}

\subsection{Wasserstein coupling and Otto calculus}\label{sec:cont:opt}

We now introduce the continuous-time analogue of the Wasserstein coupling argument in \S\ref{sec:disc}.
Unlike the synchronous coupling discussed in the previous section, which was based on path space arguments (notably, through the use of Girsanov's theorem), the present approach is more closely tied with the theory of optimal transport.

Again, fix $x,y\in\R^d$, and for ease of notation write $\mu_t \deq \delta_x P_t$ and $\nu_t \deq \delta_y P_t$.
We define a surrogate process $\{\mu_t'\}_{t\in [0,T]}$ such that $\mu_0' = \nu_0$ and $\mu_T' = \mu_T$ as follows: let $X_0' \sim \nu_0$ be a random variable that evolves according to the ODE
\begin{align}\label{eq:cont_ot_aux}
    \dot X_t'
    &= -\nabla \log \frac{\mu_t'}{\pi}(X_t') + \eta_t\,(T_{\mu_t'\to\mu_t}-{\id})(X_t')\,,
\end{align}
where $\mu_t' \deq \law(X_t')$ and $T_{\mu_t'\to\mu_t}$ denotes the optimal transport map from $\mu_t'$ to $\mu_t$.
To interpret this equation, recall that $\nabla \log \frac{\mu_t'}{\pi}$ is the Wasserstein gradient of the KL divergence $\KL(\cdot \mmid \pi)$ at $\mu_t'$ (see~\cite[\S 10.4]{AGS}).
Thus, the dynamics $\dot X_t' = -\nabla \log \frac{\mu_t'}{\pi}(X_t')$ yields the Wasserstein gradient flow of the relative entropy, which was shown in the work of R.\ Jordan, D.\ Kinderlehrer, and F.\ Otto~\cite{jordan1998variational} to describe the evolution of the marginal law of the Langevin diffusion.
The dynamics~\eqref{eq:cont_ot_aux} adds onto the Wasserstein gradient flow an additional term which drives the auxiliary process towards the original process $\{\mu_t\}_{t\in [0,T]}$.

More precisely, through the description of solutions to the continuity equation (see~\cite[\S 8]{AGS}), the process~\eqref{eq:cont_ot_aux} leads to the following PDE in the space of measures, which holds in the weak sense (in duality with space-time test functions):
\begin{align}\label{eq:cont_ot_aux_measures}
    \partial_t \mu_t'
    &= \divergence\bigl(\mu_t'\,\bigl(\nabla \log \frac{\mu_t'}{\pi} - \eta_t \,(T_{\mu_t'\to\mu_t}-{\id})\bigr)\bigr)\,.
\end{align}

We next show how the auxiliary dynamics~\eqref{eq:cont_ot_aux_measures} can also be used to reach optimal reverse transport inequalities; to the best of our knowledge, this argument is new.
To avoid obfuscating the flow of ideas with technical details, we keep our discussion at a formal level, i.e., we do not elaborate on the approximation arguments needed to make the following proof fully rigorous. For brevity, we also just focus on the KL bound here and defer the extenstion to R\'enyi divergence to \S\ref{app:renyi}.
\par Analogously to Theorem~\ref{thm:discrete-main}, the proof is based on two key bounds: on the KL divergence and the Wasserstein distance.

\paragraph*{Divergence bound for the auxiliary process.}
We first differentiate $t\mapsto \KL(\mu_t' \mmid \nu_t)$, where both arguments evolve simultaneously in time. This calculation is based on the simultaneous differentiation of KL divergence when both processes are evolving, a trick that has been used in other contexts in~\cite{VempalaW19, chenetal2022proximalsampler}.
We recall the calculation here for convenience.
\begin{align}
	\partial_t \KL(\mu_t'\mmid \nu_t)
	&= \int \partial_t \bigl( \mu_t'\log \frac{\mu_t'}{\nu_t}\bigr)
	= \int (\partial_t \mu_t') \log \frac{\mu_t'}{\nu_t} + \int \mu_t'\, \bigl(\frac{\partial_t \mu_t'}{\mu_t'} - \frac{\partial_t\nu_t}{\nu_t}\bigr) \nonumber\\
	&= -\int \mu_t' \,\bigl\langle \nabla \log \frac{\mu_t'}{\nu_t}, \nabla \log\frac{\mu_t'}{\pi} - \eta_t \,(T_{\mu_t'\to\mu_t}-{\id})\bigr\rangle 
	+ \int \nu_t\,\bigl\langle \nabla \frac{\mu_t'}{\nu_t}, \nabla \log \frac{\nu_t}{\pi}\bigr\rangle \nonumber\\
	&= -\int \mu_t' \,\bigl\langle \nabla \log \frac{\mu_t'}{\nu_t}, \nabla \log\frac{\mu_t'}{\pi} - \eta_t \,(T_{\mu_t'\to\mu_t}-{\id})\bigr\rangle 
	+ \int \mu_t'\,\bigl\langle \nabla\log \frac{\mu_t'}{\nu_t}, \nabla \log \frac{\nu_t}{\pi}\bigr\rangle \nonumber\\
	&= -\int \mu_t' \,\bigl\lVert \nabla \log \frac{\mu_t'}{\nu_t}\bigr\rVert^2 + \eta_t \int \mu_t'\,\bigl\langle \nabla \log \frac{\mu_t'}{\nu_t}, T_{\mu_t'\to\mu_t}-{\id}\bigr\rangle \nonumber\\
	&\le \frac{\eta_t^2}{4} \,W_2^2(\mu_t,\mu_t')\,, \label{eq:cont_ot_kl}
\end{align}
where the last inequality follows from the Cauchy{--}Schwarz inequality, Young's inequality, and $\norm{T_{\mu_t'\to\mu_t}-{\id}}_{L^2(\mu_t')} = W_2(\mu_t,\mu_t')$.

\paragraph*{Distance bound for the auxiliary process.}
We next differentiate $t\mapsto W_2^2(\mu_t,\mu_t')$.
Invoking~\cite[Theorem 23.9]{villani2009optimal},
\begin{align}
    \frac{1}{2}\,\partial_t W_2^2(\mu_t,\mu_t')
    &= \int \bigl\langle T_{\mu_t\to\mu_t'} - {\id}, \nabla \log \frac{\mu_t}{\pi} \bigr\rangle \, \D \mu_t + \int \bigl\langle T_{\mu_t'\to\mu_t}-{\id}, \nabla \log \frac{\mu_t'}{\pi} - \eta_t\,(T_{\mu_t'\to\mu_t} - {\id})\bigr\rangle\,\D \mu_t' \nonumber
    \\
    &= 
    \int \bigl\langle T_{\mu_t\to\mu_t'} - {\id}, \nabla \log \frac{\mu_t}{\pi} \bigr\rangle \, \D \mu_t + \int \bigl\langle T_{\mu_t'\to\mu_t}-{\id}, \nabla \log \frac{\mu_t'}{\pi} \bigr\rangle\,\D \mu_t' - \eta_t W_2^2(\mu_t, \mu_t')\,.
    \label{eq:deriv_w2}
\end{align}
From the $\alpha$-geodesic convexity of the KL divergence (see~\cite[Particular Case 23.15]{villani2009optimal}),
\begin{align*}
    \KL(\mu_t \mmid \pi)
    &\ge \KL(\mu_t' \mmid \pi) + \int \bigl\langle \nabla \log \frac{\mu_t'}{\pi}, T_{\mu_t'\to\mu_t}-{\id}\bigr\rangle \, \D \mu_t' + \frac{\alpha}{2}\,W_2^2(\mu_t,\mu_t')\,, \\
    \KL(\mu_t' \mmid \pi)
    &\ge \KL(\mu_t \mmid \pi) + \int \bigl\langle \nabla \log \frac{\mu_t}{\pi}, T_{\mu_t\to\mu_t'}-{\id}\bigr\rangle \, \D \mu_t + \frac{\alpha}{2}\,W_2^2(\mu_t,\mu_t')\,.
\end{align*}
Summing these two inequalities and combining with~\eqref{eq:deriv_w2}, we obtain
\begin{align*}
    \partial_t W_2^2(\mu_t, \mu_t')
    &\le -2\,(\alpha+\eta_t)\,W_2^2(\mu_t,\mu_t')\,.
\end{align*}
By Gr\"onwall's inequality,
\begin{align}\label{eq:cont_ot_w2}
    W_2^2(\mu_t,\mu_t')
    &\le \exp\Bigl(-2\alpha t - 2\int_0^t \eta_s \, \D s\Bigr)\,\norm{x-y}^2\,.
\end{align}

\paragraph*{Concluding the argument.}
Substituting~\eqref{eq:cont_ot_w2} into~\eqref{eq:cont_ot_kl} yields
\begin{align*}
    \KL(\delta_x P_T \mmid \delta_y P_T)
    &= \KL(\mu_T' \mmid \nu_T)
    \le \frac{\norm{x-y}^2}{4} \int_0^T \eta_t^2 \exp\Bigl(-2\alpha t - 2\int_0^t \eta_s \, \D s\Bigr)\,\D t\,.
\end{align*}
Note that this leads to the same bound as the one obtained in \S\ref{sec:cont:sync}. In particular, with the same choice of $\eta_t = 2\alpha/\{\exp(2\alpha\,(T-t))-1\}$, we once again arrive at the reverse transport inequality
\begin{align*}
    \KL(\delta_x P_T \mmid \delta_y P_T)
    &\le \frac{\alpha\,\norm{x-y}^2}{2\,(\exp(2\alpha T) - 1)}\,.
\end{align*}
\par We conclude this section with a remark on a different method of organizing the calculations that leads to a link with the JKO scheme of~\cite{jordan1998variational}.

\begin{remark}[Connection with the JKO scheme]
Through~\eqref{eq:cont_ot_kl} and~\eqref{eq:cont_ot_w2}, we have bounded the derivative in time for the KL divergence and Wasserstein distance separately; however, since these derivatives are expressed in terms of the same quantity $W_2^2(\mu_t,\mu_t')$, it also leads to the decay of a joint Lyapunov functional which incorporates both terms simultaneously. 
Namely, define
\begin{align*}
    \mc L_t
    &\deq \KL(\mu_t' \mmid \pi) + \frac{1}{2\lambda_t}\,W_2^2(\mu_t,\mu_t')\,.
\end{align*}
Differentiating this quantity and using~\eqref{eq:cont_ot_kl},~\eqref{eq:cont_ot_w2} yields
\begin{align*}
	\dot{\mc L}_t
	&\le \bigl( \frac{\eta_t^2}{4} - \frac{\dot\lambda_t}{2\lambda_t^2} - \frac{\alpha + \eta_t}{\lambda_t}\bigr)\,W_2^2(\mu_t,\mu_t')\,.
\end{align*}
Now we can optimize over the choice of $\eta_t$, leading to $\eta_t = 2/\lambda_t$, and then $\dot{\mc L}_t \le 0$ provided that $\dot\lambda_t +2\,(\alpha\lambda_t + 1) \ge 0$.
If we solve this differential inequality, enforcing that $\lambda_t \to 0$ as $t\nearrow T$, it then leads to the choice $\lambda_t = \{\exp(2\alpha \,(T-t))-1\}/\alpha$.

This result can be reinterpreted as follows.
For $\lambda > 0$, define
\begin{align}\label{eq:moreau}
    \MEKL(\mu; \lambda) \deq \inf_{\mu'\in \mc P_2(\R^d)}\Bigl\{\KL(\mu'\mmid \pi) + \frac{1}{2\lambda}\,W_2^2(\mu,\mu')\Bigr\}\,.
\end{align}
This is the generalization to the Wasserstein space of the Moreau--Yosida envelope, which is classically studied in conjunction with proximal methods in optimization~\cite{rockafellar1997convex, Beck17FirstOrder}.
The Moreau--Yosida envelope also appears as the Hopf--Lax solution to the Hamilton--Jacobi equation in classical mechanics.
In the Wasserstein space, the proximal point algorithm for minimizing the KL divergence (the iterates of which are generated by successively minimizing~\eqref{eq:moreau}) is commonly referred to as the ``minimizing movements scheme'' or the ``JKO scheme''. With our choice of $\{\lambda_t\}_{t\in [0, T]}$, for any $t < T$,
\begin{align}\label{eq:ME_as_lyapunov}
    \MEKL(\mu_t;\lambda_t)
    \le \mc L_t
    \le \mc L_0
    = \MEKL(\mu_0; \lambda_0)
\end{align}
where the last equality holds if we choose\footnote{In the calculations above, we worked with $\mu_0 = \delta_x$ and $\nu_0 = \delta_y$ for simplicity and consistency with previous sections. However, it is clear that the same calculations go through for general initial conditions $\mu_0$, $\nu_0$, with the only modification being that we replace $\norm{x-y}^2$ with $W_2^2(\mu_0,\nu_0)$.} $\nu_0$ to be the minimizer in $\MEKL(\mu_0;\lambda_0)$.
In particular,~\eqref{eq:ME_as_lyapunov} readily implies
\begin{align*}
    \KL(\mu_T \mmid \pi)
    &\le \liminf_{t\nearrow T} \MEKL(\mu_t;\lambda_t)
    \le \MEKL(\mu_0;\lambda_0)
    \le \frac{\alpha\,W_2^2(\mu_0,\pi)}{2\,(\exp(2\alpha T)-1)}\,.
\end{align*}
The moral of the story, then, is that \emph{the Moreau{--}Yosida envelope with time-varying parameter $\lambda$ can be used as a Lyapunov functional for the gradient flow}.
This seems to be a new observation, and the use of this principle as a unifying analysis framework for optimization will be explored in a separate work.
\end{remark}

\section{Extensions to other settings}\label{sec:ext}

In this section, we consider extensions of our results to a few different settings, focusing on KL regularity throughout for simplicity.

\subsection{Diffusions on manifolds}\label{ssec:ext:riem}

We briefly note that the proofs in \S\ref{sec:cont} readily extend to the setting of a diffusion on a complete Riemannian manifold $\cM$.
Suppose that the generator of the diffusion is $\Delta - \langle \nabla V, \nabla \cdot\rangle$ where $\Delta$ is the Laplace{--}Beltrami operator, and that the condition $\text{Ric}_{\cM} + \nabla^2 V \succeq \alpha$ holds for some $\alpha\in\R$.
The coupling introduced in \S\ref{sec:cont:sync} was in fact originally developed to prove Harnack inequalities in the Riemannian setting in~\cite{ArnThaWan06HarnackCurvUnbdd}; see~\cite[Theorem 2.3.2]{Wang14Diffusion} for general definitions. As for the optimal transport approach of \S\ref{sec:cont:opt}, the proofs go through as before using the calculus of optimal transport over Riemannian manifolds, see~\cite[\S 23]{villani2009optimal}.

\subsection{Multiplicative noise}\label{ssec:ext:ito}

In this section, we consider the more general It\^o SDE
\begin{align*}
    \D X_t = b_t(X_t) \, \D t + \sigma_t(X_t) \, \D B_t\,,
\end{align*}
and we assume that the SDE is well-posed.
In the paper~\cite{Wang11HarnackMultNoise}, F.-Y.\ Wang obtained a log-Harnack inequality under the following assumptions.

\begin{ass}\label{ass:mult_noise}
    The following hold.
    \begin{itemize}
        \item ($W_2$-Lipschitz) There exists $\alpha \in\R$ such that for all $x,y\in\R^d$ and $t\in [0,T]$,
        \begin{align*}
            \langle b_t(x) - b_t(y), x-y\rangle + \frac{1}{2}\,\norm{\sigma_t(x) - \sigma_t(y)}_{\rm HS}^2
            &\le -\alpha\,\norm{x-y}^2\,.
        \end{align*}
        \item (Uniformly elliptic) There exists $\lambda > 0$ such that for all $x\in\R^d$ and $t \in [0,T]$,
        \begin{align*}
            \sigma_t(x)\,{\sigma_t(x)}^\T \succeq \lambda I\,.
        \end{align*}
    \end{itemize}
\end{ass}

Here, we illustrate that his result can be obtained in a simple manner via our techniques. In fact, we will do so by obtaining a reverse transport inequality for the discretization
\begin{align}\label{eq:discretized_ito}
    \hat X_{(n+1)h} = \hat X_{nh} + h\,b_{nh}(\hat X_{nh}) + \sqrt h\,\sigma_{nh}(\hat X_{nh})\,\xi_{nh}\,,
\end{align}
where $\{\xi_{nh}\}_{n\in\N}$ is an i.i.d.\ sequence of standard Gaussian vectors, and then passing to the limit $h\searrow 0$.
To the best of our knowledge, our result for the discretization~\eqref{eq:discretized_ito} is new.

For the discretization, we must also impose an additional assumption.
In our final bound, however, the dependence on the parameters $\beta$, $\Lambda$ below will vanish when we take $h\searrow 0$.

\begin{ass}\label{ass:mult_two}
    There exist $\beta, \Lambda > 0$ such that for all $x,y\in\R^d$ and $t\in [0,T]$,
    \begin{align*}
        \norm{b_t(x) - b_t(y)} \le \beta\,\norm{x-y} \qquad\text{and}\qquad\sigma_t(x)\,{\sigma_t(x)}^\T \preceq \Lambda I\,.
    \end{align*}
\end{ass}

We can now state our main result for this section.

\begin{theorem}\label{thm:mult_noise}
    Let $\{\hat\mu_{nh}\}_{n=0}^N$ and $\{\hat\nu_{nh}\}_{n=0}^N$ denote the marginal laws of the process~\eqref{eq:discretized_ito} started from $x$ and from $y$ respectively.
    Suppose that Assumptions~\ref{ass:mult_noise} and~\ref{ass:mult_two} hold with $T = Nh$.
    Then,
    \begin{align*}
        \KL(\hat\mu_{Nh} \mmid \hat\nu_{Nh})
        &\le \Bigl(1 + \frac{4\,(\beta-\alpha)\,(\Lambda/\lambda)^3\, h}{L^2}\Bigr)\,\frac{\alpha\,(1-\beta h/2)}{\lambda\, (L^{-2N} - 1)}\,\norm{x-y}^2\,, \qquad\text{where}~L^2 \deq 1-2\alpha h + \beta^2 h^2\,.
    \end{align*}
\end{theorem}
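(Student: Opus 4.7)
The plan is to combine the one-to-multi-step reduction of Theorem~\ref{thm:discrete-main} with the observation of Remark~\ref{rmk:improving_winf} that, for the KL divergence, the Wasserstein-Lipschitz hypothesis can be weakened from $W_\infty$ to $W_2$. This relaxation is essential here, since the Euler--Maruyama kernel $\hat P_h(x,\cdot) = \mathcal N(x+hb(x), h\sigma(x)\sigma(x)^{\mathsf T})$ convolves an $x$-dependent Gaussian and is generally not $W_\infty$-Lipschitz when $\sigma$ is non-constant. Two checks are needed: (a) that $\hat P_h$ is $W_2$-Lipschitz with constant $L$ given by $L^2 = 1-2\alpha h + \beta^2 h^2$; and (b) that $\hat P_h$ satisfies a one-step KL bound $\KL(\delta_x\hat P_h \mmid \delta_y\hat P_h) \le c\,\norm{x-y}^2$ for an explicit constant $c$. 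Substituting into Theorem~\ref{thm:discrete-main} and simplifying $(L^{-2}-1)/(L^{-2N}-1)$ via $1-L^2 = 2\alpha h-\beta^2 h^2$ then yields the stated inequality.

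Step (a) follows from a synchronous coupling of the Gaussian innovations: direct expansion gives
\[
    W_2^2(\hat P_h(x,\cdot), \hat P_h(y,\cdot))
    \le \norm{x-y}^2 + 2h\,\angs{x-y}{b(x)-b(y)} + h^2\norm{b(x)-b(y)}^2 + h\norm{\sigma(x)-\sigma(y)}_{\mathrm{HS}}^2\,,
\]
which, upon invoking the one-sided bound from Assumption~\ref{ass:mult_noise} together with $\norm{b(x)-b(y)}^2 \le \beta^2\norm{x-y}^2$ from Assumption~\ref{ass:mult_two}, collapses to $L^2\,\norm{x-y}^2$; the passage from Dirac to general initial measures is then a standard coupling argument. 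The same two assumptions also deliver the auxiliary estimate $\norm{\sigma(x)-\sigma(y)}_{\mathrm{HS}}^2 \le 2\,(\beta-\alpha)\,\norm{x-y}^2$ (obtained by subtracting the one-sided dissipativity bound from the Cauchy--Schwarz bound on the drift inner product); this estimate is what ultimately controls the covariance contribution in step (b).

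For step (b), the explicit Gaussian KL formula decomposes the divergence into a mean term $\tfrac{1}{2h}\,(m_y-m_x)^{\mathsf T}\Sigma_y^{-1}(m_y-m_x)$ and a covariance term $\tfrac12\,[\tr(M)-d-\log\det M]$, where $m_z \deq z+hb(z)$, $\Sigma_z \deq \sigma(z)\sigma(z)^{\mathsf T}$, and $M \deq \Sigma_y^{-1}\Sigma_x$. Using $\Sigma_y^{-1}\preceq \lambda^{-1}I$ together with the computation of step (a), the mean term is at most $\tfrac{L^2}{2h\lambda}\norm{x-y}^2$ (after discarding a useful negative contribution in $\norm{\sigma(x)-\sigma(y)}_{\mathrm{HS}}^2$). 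For the covariance term, the eigenvalues of $M$ lie in $[\lambda/\Lambda,\Lambda/\lambda]$, so the scalar inequality $\mu-1-\log\mu \le \tfrac{(\mu-1)^2}{2\min(\mu^2,1)}$ yields $\tr(M)-d-\log\det M \le \tfrac{\Lambda^2}{2\lambda^2}\norm{M-I}_{\mathrm{HS}}^2$; combining this with the telescoping identity $\Sigma_x-\Sigma_y = \sigma(x)\,(\sigma(x)-\sigma(y))^{\mathsf T} + (\sigma(x)-\sigma(y))\,\sigma(y)^{\mathsf T}$, the bound $\norm{\sigma(z)}_{\mathrm{op}}^2 \le \Lambda$, and the auxiliary estimate above produces a contribution of order $(\beta-\alpha)\,\Lambda^3\lambda^{-4}\norm{x-y}^2$. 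Summing gives the one-step constant $c = \tfrac{L^2}{2h\lambda}\,\bigl[1 + 4h\,(\beta-\alpha)\,(\Lambda/\lambda)^3/L^2\bigr]$; plugging into Theorem~\ref{thm:discrete-main} yields the claimed bound.

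The principal technical obstacle in this outline is the covariance estimate: the naive Gaussian-KL bound $\tr(M)-d-\log\det M \le \tfrac12\norm{M-I}_{\mathrm{HS}}^2$ requires $M\succeq I$, so one must instead exploit two-sided ellipticity $\lambda I \preceq \Sigma_z \preceq \Lambda I$ to uniformly control $\mu-1-\log\mu$ on the eigenvalue range $[\lambda/\Lambda,\Lambda/\lambda]$, and it is precisely this uniform control that is responsible for the $(\Lambda/\lambda)^3$ factor in the final statement.
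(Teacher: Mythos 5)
Your proposal is correct and follows essentially the same route as the paper: verify $W_2$-Lipschitzness with $L^2 = 1-2\alpha h+\beta^2 h^2$ via synchronous coupling, prove the one-step Gaussian KL bound by splitting into mean and covariance terms (with the eigenvalue range $[\lambda/\Lambda,\Lambda/\lambda]$ controlling $\zeta-1-\log\zeta$, the same telescoping of $\Sigma_x-\Sigma_y$, and the estimate $\norm{\sigma(x)-\sigma(y)}_{\mathrm{HS}}^2\le 2\,(\beta-\alpha)\,\norm{x-y}^2$), and then invoke Theorem~\ref{thm:discrete-main} with the $W_2$ relaxation of Remark~\ref{rmk:improving_winf}; your one-step constant matches the paper's exactly. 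The only detail you gloss over, which the paper notes explicitly, is that the coefficients $b_t,\sigma_t$ are time-dependent so the kernel changes between iterations; since your estimates are uniform in $t\in[0,T]$, this is handled by the non-stationary extension of Theorem~\ref{thm:discrete-main} remarked on in \S\ref{ssec:disc:reduction}.
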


In particular, if we let $h\searrow 0$ with $Nh\to T$, then with the obvious notation,
\begin{align*}
    \KL(\mu_T \mmid \nu_T)
    &\le \frac{\alpha \,\norm{x-y}^2}{\lambda \, (\exp(2\alpha T) - 1)}\,.
\end{align*}
This recovers the result of~\cite{Wang11HarnackMultNoise}, at least for the log-Harnack inequality.
It also includes the results for the Langevin diffusion as a special case.
We do not treat the other Harnack inequalities corresponding to R\'enyi regularity here, as the use of R\'enyi divergences introduces substantial new complications in this setting.

We now give the proof, which simply amounts to checking the two conditions of Theorem~\ref{thm:discrete-main}.

\begin{proof}[Proof of Theorem~\ref{thm:mult_noise}]
    Let $t\in [0,T]$ and consider the kernel $P$ such that for any $x\in\R^d$, $\delta_x P = \cN(x+h\,b_t(x), \; h\,\sigma_t(x)\,{\sigma_t(x)}^\T)$.
    Write $\Sigma_t(x) \deq \sigma_t(x) \,\sigma_t(x)^\T$ and $\Sigma_t(y) \deq \sigma_t(y) \,\sigma_t(y)^\T$ for simplicity.

    \textbf{One-step regularity.}
    Using the closed-form expression for the KL divergence between Gaussians, we can compute
    \begin{align*}
        \KL(\delta_x P \mmid \delta_y P)
        &= \frac{1}{2}\,\Bigl[\frac{1}{h}\,\bigl\langle \Sigma_t(y)^{-1}, (x+h\,b_t(x) - y - h\,b_t(y))^{\otimes 2}\bigr\rangle + \tr f\bigl(\Sigma_t(y)^{-1/2}\,\Sigma_t(x)\, \Sigma_t(y)^{-1/2}\bigr)\Bigr]\,,
    \end{align*}
    where $f$ is the mapping $\zeta \mapsto \zeta - 1 - \log \zeta$.

    For the first term, we use the uniform ellipticity $\Sigma_t(y) \succeq \lambda I$, so it suffices to bound the quantity $\norm{x+h\,b_t(x) - y-h\,b_t(y)}^2$.
    Expanding the square and applying Assumptions~\ref{ass:mult_noise} and~\ref{ass:mult_two},
    \begin{align*}
        \norm{x+h\,b_t(x)-y-h\,b_t(y)}^2
        &= \norm{x-y}^2 +2h\,\langle b_t(x) - b_t(y), x-y\rangle + h^2\,\norm{b_t(x) - b_t(y)}^2 \\
        &\le (1-2\alpha h + \beta^2 h^2)\,\norm{x-y}^2\,.
    \end{align*}

    For the second term, let $\{\zeta_i\}_{i=1}^d$ denote the eigenvalues of $\Sigma_t(y)^{-1/2}\,\Sigma_t(x)\,\Sigma_t(y)^{-1/2}$.
    From Assumptions~\ref{ass:mult_noise} and~\ref{ass:mult_two}, we have $\zeta_i \ge \lambda/\Lambda$ for all $i=1,\dotsc,d$.
    Also, since $f(1) = f'(1) = 0$ and $f''(\zeta) = 1/\zeta^2 \le (\Lambda/\lambda)^2$ for all $\zeta \ge \lambda/\Lambda$, it follows that $f(\zeta_i) \le \frac{\Lambda^2}{2\lambda^2}\,{(\zeta_i-1)}^2$ for all $i=1,\dotsc,d$.
    Thus,
    \begin{align*}
        \sum_{i=1}^d f(\zeta_i)
        \le \frac{\Lambda^2}{2\lambda^2} \sum_{i=1}^d {(\zeta_i - 1)}^2
        = \frac{\Lambda^2}{2\lambda^2} \,\norm{\Sigma_t(y)^{-1/2} \,\Sigma_t(x) \,\Sigma_t(y)^{-1/2}-I}_{\rm HS}^2
        \le \frac{\Lambda^2}{2\lambda^4}\,\norm{\Sigma_t(x) - \Sigma_t(y)}_{\rm HS}^2\,.
    \end{align*}
    We can also expand
    \begin{align*}
        \norm{\Sigma_t(x) - \Sigma_t(y)}_{\rm HS}^2
        &= \norm{\sigma_t(x) \,\sigma_t(x)^\T - \sigma_t(y)\,\sigma_t(y)^\T}_{\rm HS}^2 \\
        &\le \,\bigl( \norm{\sigma_t(x) \,\sigma_t(x)^\T - \sigma_t(x)\,\sigma_t(y)^\T}_{\rm HS} + 
        \norm{\sigma_t(x) \,\sigma_t(y)^\T - \sigma_t(y)\,\sigma_t(y)^\T}_{\rm HS}\bigr)^2 \\
        &\le 2\,\bigl( \norm{\sigma_t(x) \,\sigma_t(x)^\T - \sigma_t(x)\,\sigma_t(y)^\T}_{\rm HS}^2 + 
        \norm{\sigma_t(x) \,\sigma_t(y)^\T - \sigma_t(y)\,\sigma_t(y)^\T}_{\rm HS}^2\bigr) \\
        &\le 4\Lambda \,\norm{\sigma_t(x) - \sigma_t(y)}_{\rm HS}^2 \\
        &\le 8\Lambda\,\bigl(-\alpha\,\norm{x-y}^2 - \langle b_t(x) - b_t(y), x-y\rangle\bigr)
        \le 8\,(\beta-\alpha)\,\Lambda\,\norm{x-y}^2\,.
    \end{align*}
    
    Combining everything together,
    \begin{align*}
        \KL(\delta_x P \mmid \delta_y P)
        &\le \frac{1-2\alpha h + \beta^2 h^2 + 4\,(\beta-\alpha)\, (\Lambda/\lambda)^3\,h}{2\lambda h}\,\norm{x-y}^2\,.
    \end{align*}

    \textbf{Lipschitzness of the kernel.} Next, consider a synchronous coupling of $\delta_x P$ and $\delta_y P$, i.e., we use the same noise random variable $\xi \sim \cN(0, I)$.
    Then, by Assumptions~\ref{ass:mult_noise} and~\ref{ass:mult_two},
    \begin{align*}
        W_2^2(\delta_x P, \delta_y P)
        &\le \E[\norm{x+h\,b_t(x) + \sqrt h \,\sigma_t(x)\,\xi - y - h\,b_t(y)-\sqrt h\,\sigma_t(y)\,\xi}^2] \\
        &= \norm{x+h\,b_t(x) - y-h\,b_t(y)}^2 + h \,\E[\norm{(\sigma_t(x)-\sigma_t(y))\,\xi}^2] \\
        &= \norm{x-y}^2 +2h\,\langle b_t(x)-b_t(y), x-y\rangle + \norm{b_t(x)-b_t(y)}^2 + h\,\norm{\sigma_t(x) - \sigma_t(y)}_{\rm HS}^2 \\
        &\le (1-2\alpha h + \beta^2 h^2)\,\norm{x-y}^2\,.
    \end{align*}

    \textbf{Concluding the proof.} We can now invoke Theorem~\ref{thm:discrete-main}.
    Indeed, to prove a reverse transport inequality for KL divergence, it suffices to have $W_2$-Lipschitzness of the kernel (see Remark~\ref{rmk:improving_winf}).
    Note that Theorem~\ref{thm:discrete-main} was stated for repeated applications of a single Markov kernel $P$, whereas (due to the time dependence of the coefficients of the process~\eqref{eq:discretized_ito}), here the Markov kernel changes with each iteration.
    However, since our bounds for the one-step regularity and Lipschitz constant of the kernel are uniform in time, it is easy to see that the proof of Theorem~\ref{thm:discrete-main} can be adapted to this case straightforwardly.
\end{proof}

\subsection{Sums of i.i.d.\ random variables}\label{ssec:ext:clt}

Here, we leverage the discrete-time nature of our arguments to establish a reverse transport inequality for i.i.d.\ sum processes.
Let $\rho$ be a probability density on $\R^d$ with zero mean, and let $P_h$ denote the Markov kernel representing convolution with the rescaled distribution $\rho_h(\cdot) \deq h^{-d}\, \rho(\cdot/h)$.
We will apply the shifted chain rule for the KL divergence in order to bound the quantity $\KL(\delta_x P_h^N \mmid \delta_y P_h^N)$ where $h = \frac{1}{\sqrt N}$ is chosen according to the central limit scaling and we send $N\to\infty$.

Our argument relies on the Taylor expansion of the log-density $\log \rho$, and hence we adopt the following assumptions to facilitate the proof.

\begin{ass}\label{ass:clt_setting}
    The density $\rho$ is strictly positive on $\R^d$. Also, the log-density $\log \rho$ is twice continuously differentiable and there exists $\varepsilon > 0$ such that $\int (\sup_{B(z,\varepsilon)}{\norm{\nabla^2 \log \rho}})\,\rho(\D z) <\infty$, where $B(z,\varepsilon)$ is the ball of radius $\varepsilon$ centered at $z$.
\end{ass}

In the statistics literature, the so-called \emph{differentiability in quadratic mean} (DQM) condition, which amounts to $L^2$ differentiability of the \emph{square root} of the density, has been shown to imply important consequences such as local asymptotic normality of the log-likelihood (c.f.~\cite[\S 7.2]{Vaart98Asymptotic}).
Although the DQM condition appears to be too weak to establish the following theorem, Assumption~\ref{ass:clt_setting} is certainly stronger than necessary.
We leave the problem of formulating the minimal set of assumptions for future work.

\begin{theorem}\label{thm:clt_setting}
    Let $\rho$ be a probability density on $\R^d$ with zero mean, satisfying Assumption~\ref{ass:clt_setting}.
    Let $h \deq \frac{1}{\sqrt N}$ and let $P_h$ stand for the Markov kernel representing convolution with the rescaled density $\rho_h(\cdot) \deq h^{-d}\,\rho(\cdot/h)$.
    Then, for all $x,y \in \R^d$,
    \begin{align*}
        \limsup_{N\to\infty} \KL(\delta_x P_h^N \mmid \delta_y P_h^N)
        &\le \frac{1}{2} \,\bigl\langle y-x, \bigl( \E_\rho \nabla^2 \log \frac{1}{\rho} \bigr)\,(y-x)\bigr\rangle\,.
    \end{align*}
\end{theorem}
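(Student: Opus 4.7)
The plan is to apply the iterated shifted chain rule (as in the proof of Theorem~\ref{thm:discrete-main}) with a synchronous coupling, choosing the shift parameters so that the discrepancy $x-y$ is absorbed \emph{uniformly} over the $N$ steps. Concretely, let ${(\xi_k)}_{k\ge 1} \simiid \rho$, set $X_0 = x$, $X_0' = y$, and define
\begin{align*}
    X_{n+1} &= X_n + h\,\xi_{n+1}\,, \qquad X_{n+1}' = \tilde X_n + h\,\xi_{n+1}\,, \qquad \tilde X_n = X_n' + \eta_n\,(X_n - X_n')\,.
\end{align*}
Because the same noise drives both processes, the difference $D_n \deq X_n - X_n'$ evolves \emph{deterministically} as $D_{n+1} = (1-\eta_n)\,D_n$. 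Take $\eta_n = 1/(N-n)$, which yields $D_n = (x-y)\,(N-n)/N$ and $\eta_{N-1} = 1$ (so that $X_N' = X_N$, hence $\law(X_N') = \law(X_N) = \delta_x P_h^N$). Crucially, $\tilde X_n - X_n' = \eta_n D_n = (x-y)/N$ is the \emph{same constant} at every step. Iterating Theorem~\ref{thm:scr} exactly as in the proof of Theorem~\ref{thm:discrete-main} gives
\begin{align*}
    \KL(\delta_x P_h^N \mmid \delta_y P_h^N) \le \sum_{n=0}^{N-1} \KL(\delta_{\tilde X_n} P_h \mmid \delta_{X_n'} P_h)\,.
\end{align*}

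\textbf{Reduction to a single scalar quantity.} For the convolution kernel $P_h$, translation invariance gives $\KL(\delta_a P_h \mmid \delta_b P_h) = \int \rho_h(v)\,\log\bigl(\rho_h(v)/\rho_h(v-(b-a))\bigr)\,\D v$, which by the change of variables $w = v/h$ equals $\psi\bigl((b-a)/h\bigr)$, where $\psi(\tau) \deq \int \rho(w)\,\log\bigl(\rho(w)/\rho(w-\tau)\bigr)\,\D w$. In our setting $b - a = X_n' - \tilde X_n = (y-x)/N$, so $(b-a)/h = (y-x)/\sqrt N$ and the summands are all equal. Thus
\begin{align*}
    \KL(\delta_x P_h^N \mmid \delta_y P_h^N) \le N\,\psi\bigl((y-x)/\sqrt N\bigr)\,.
\end{align*}
It remains to identify the limit of the right-hand side as $N \to \infty$. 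Using the second-order Taylor expansion with integral remainder applied to $s \mapsto \log \rho(w-s\tau)$, one computes
\begin{align*}
    \psi(\tau) = \int \rho(w)\,\langle \nabla \log \rho(w),\tau\rangle\,\D w - \int_0^1(1-s)\int \rho(w)\,\langle \tau, \nabla^2 \log \rho(w-s\tau)\,\tau\rangle\,\D w\,\D s\,,
\end{align*}
where the first term vanishes by integration by parts ($\int \nabla \rho = 0$).

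\textbf{Passing to the limit; main obstacle.} Multiplying the remaining term by $N$ rescales the prefactor $\tau = (y-x)/\sqrt N$ to $y-x$:
\begin{align*}
    N\,\psi\bigl((y-x)/\sqrt N\bigr) = -\int_0^1(1-s)\int \rho(w)\,\bigl\langle y-x,\,\nabla^2 \log \rho\bigl(w-s\,(y-x)/\sqrt N\bigr)\,(y-x)\bigr\rangle\,\D w\,\D s\,.
\end{align*}
The \emph{main obstacle} is to justify passing the limit $N \to \infty$ under the integral: the pointwise limit of the integrand is $-\rho(w)\,\langle y-x, \nabla^2 \log \rho(w)\,(y-x)\rangle$, and this is precisely where Assumption~\ref{ass:clt_setting} does the work, providing the integrable majorant $\rho(w)\,\norm{y-x}^2\,\sup_{B(w,\varepsilon)}{\norm{\nabla^2 \log \rho}}$ valid once $N$ is large enough that $\norm{(y-x)/\sqrt N} < \varepsilon$. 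Dominated convergence then gives
\begin{align*}
    \limsup_{N\to\infty} N\,\psi\bigl((y-x)/\sqrt N\bigr) = -\frac{1}{2}\,\bigl\langle y-x,\bigl(\int \rho\,\nabla^2 \log \rho\bigr)\,(y-x)\bigr\rangle = \frac{1}{2}\,\bigl\langle y-x,\,\bigl(\E_\rho \nabla^2 \log \tfrac{1}{\rho}\bigr)\,(y-x)\bigr\rangle\,,
\end{align*}
which is the claimed bound. All other steps---the shifted-chain-rule iteration and the reduction via translation invariance---are straightforward given the machinery of \S\ref{sec:disc}.
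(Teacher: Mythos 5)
Your proposal is correct and follows essentially the paper's own route: the paper likewise reduces the problem to $N\,\KL(\rho_h(\cdot)\mmid\rho_h(\cdot-v))$ with $v=(y-x)/N$ --- i.e., the displacement spread uniformly over the $N$ steps, so each step costs $\psi((y-x)/\sqrt N)$ --- and then concludes by the same Taylor expansion (Lagrange rather than integral remainder), the vanishing of the first-order term by integration by parts, and dominated convergence under Assumption~\ref{ass:clt_setting}. The only difference is presentational: where you iterate the shifted composition rule with the synchronous coupling and shifts $\eta_n = 1/(N-n)$ (the $L=1$ limit of the optimal shifts from Theorem~\ref{thm:discrete-main}), the paper explicitly notes that one could ``appeal to the shifted chain rule'' but instead gives a direct argument via the data-processing inequality and the ordinary KL chain rule applied to shifted partial-sum tuples, yielding the identical per-step bound.
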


\begin{example}[Gaussian convolution]\label{ex:clt-gaussian}
    Consider $\rho = \cN(0,\Sigma)$, where $\Sigma \succ 0$. Then $\rho_h = \cN(0,h^2 \Sigma)$, so $\delta_x P_h^N = \cN(x, Nh^2 \Sigma) = \cN(x, \Sigma)$ because $h = N^{-1/2}$, thus 
    \begin{align*}
        \KL(\delta_x P_h^N \mmid \delta_y P_h^N) = \frac{1}{2} \,\langle x-y, \Sigma^{-1}\,(x-y)\rangle\,.
    \end{align*}
    Thus in this setting of Gaussian $\rho$, Theorem~\ref{thm:clt_setting} is tight because $\E_\rho\nabla^2 \log(1/\rho) = \Sigma^{-1}$.
\end{example}

The matrix $\E_\rho \nabla^2 \log(1/\rho)$ is usually called the \emph{Fisher information matrix}, and from integration by parts it can also be written $\E_\rho[{(\nabla \log \rho)}^{\otimes 2}]$. While the Fisher information matrix is equal to the inverse covariance in the special case of Gaussians (Example~\ref{ex:clt-gaussian}), this equality does not hold in general. The Cram\'{e}r{--}Rao inequality (see~\cite[Appendix A]{ChePoo22Caffarelli} for a self-contained proof) states that $\E_\rho\nabla^2 \log(1/\rho) \succeq {(\cov_{\rho})}^{-1}$, thus the bound in Theorem~\ref{thm:clt_setting}
is always at least as big as
\begin{align}\label{eq:clt_true_ans}
    \frac{1}{2} \,\langle y-x, {(\cov_\rho)}^{-1}\,(y-x)\rangle\,,
\end{align}
which is what we would expect from CLT heuristics. In fact, under our (stringent) assumptions, the proof above actually \emph{implies} the Cram\'{e}r{--}Rao inequality.
Indeed, the central limit theorem implies $\delta_z P_h^N \to \cN(z,\cov_\rho)$ weakly for any $z\in\R^d$, and together with the lower semicontinuity of the KL divergence and Theorem~\ref{thm:clt_setting},
\begin{align*}
    \frac{1}{2}\,\langle y-x, {(\cov_\rho)}^{-1}\,(y-x)\rangle
    &= \KL(\cN(x, \cov_\rho) \mmid \cN(y,\cov_\rho)) \\
    &\le \liminf_{N\to\infty} \KL(\delta_x P_h^N \mmid \delta_y P_h^N)
    \le \frac{1}{2}\,\bigl\langle y-x, \bigl(\E_\rho \nabla^2 \log \frac{1}{\rho}\bigr)\,(y-x)\bigr\rangle\,.
\end{align*}
Since this holds for all $x,y\in\R^d$, we conclude that ${(\cov_\rho)}^{-1} \preceq \E_\rho\nabla^2 \log(1/\rho)$.

Although we have phrased Theorem~\ref{thm:clt_setting} as an asymptotic statement, non-asymptotic statements can be extracted from the proof below.
For example, if $\nabla^2 \log \rho$ is Lipschitz, then one obtains a non-asymptotic version of Theorem~\ref{thm:clt_setting} with an error term of order $O(1/\sqrt N)$.
We conjecture that under suitable assumptions, the non-asymptotic bounds can be further replaced by bounds with leading term~\eqref{eq:clt_true_ans}, but we are unable to reach this with our techniques.

\begin{proof}[Proof of Theorem~\ref{thm:clt_setting}]
    By translation invariance, it suffices to bound $\KL(\rho_h(\cdot) \mmid \rho_h(\cdot - v))$ where $v \deq (y-x)/N$.
    Taylor expansion of the log-density yields
    \begin{align}
        \KL(\rho_h(\cdot) \mmid \rho_h(\cdot - v))
        &= \int \log\Bigl(\frac{\rho(z)}{\rho(z-v/h)}\Bigr)\, \rho(\D z) \nonumber \\
        &= \int \Bigl( \bigl\langle \nabla \log \rho(z), \frac{v}{h} \bigr\rangle - \frac{1}{2}\,\bigl\langle \nabla^2 \log \rho(\tilde z), \bigl( \frac{v}{h}\bigr)^{\otimes 2}\bigr\rangle\Bigr)\,\rho(\D z)\,,\label{eq:expand_kl}
    \end{align}
    where $\tilde z$ is a point lying between $z$ and $z-v/h$.
    In particular, since $v/h = (y-x)/\sqrt N$, we have $\norm{\tilde z - z} \lesssim 1/\sqrt N$ uniformly in $z$.
    The first term in this expansion vanishes due to integration by parts.
    For the second term, we observe that pointwise,
    \begin{align*}
        N\,\bigl\langle \nabla^2 \log \rho(\tilde z), \bigl( \frac{v}{h}\bigr)^{\otimes 2}\bigr\rangle
        &= \langle \nabla^2 \log \rho(\tilde z), {(y-x)}^{\otimes 2}\rangle
        \to \langle \nabla^2 \log \rho(z), {(y-x)}^{\otimes 2}\rangle
    \end{align*}
    by continuity.
    On the other hand, for large $N$,
    \begin{align*}
        N\,\bigl\lvert \bigl\langle \nabla^2 \log \rho(\tilde z), \bigl( \frac{v}{h}\bigr)^{\otimes 2}\bigr\rangle\bigr\rvert
        &\le \bigl(\sup_{B(z,\varepsilon)}{\norm{\nabla^2 \log \rho}}\bigr)\,\norm{y-x}^2\,.
    \end{align*}
    Thus by Assumption~\ref{ass:clt_setting} we may appeal to the dominated convergence theorem, which gives
    \begin{align}\label{eq:kl_dom_conv}
        N \int \bigl\langle \nabla^2 \log \rho(\tilde z), \bigl( \frac{v}{h}\bigr)^{\otimes 2}\bigr\rangle\,\rho(\D z)
        \to \int \langle \nabla^2 \log \rho(z), {(y-x)}^{\otimes 2}\rangle \, \rho(\D z)\,.
    \end{align}

    To conclude the proof, we can appeal to the shifted chain rule. In the present setting, however, we can provide a more direct argument which could be illuminating.
    We wish to bound
    \begin{align*}
        \KL(\delta_x P_h^N \mmid \delta_y P_h^N)
        &= \KL\Bigl(\law\Bigl(x+\frac{1}{\sqrt N} \sum_{i=1}^N \xi_i\Bigr) \Bigm\Vert \law\Bigl(y + \frac{1}{\sqrt N} \sum_{i=1}^N \xi_i\Bigr)\Bigr)\,,
    \end{align*}
    where $(\xi_i : i=1,\dotsc,N)$ is a family of i.i.d.\ random variables with law $\rho$.
    By the data-processing inequality, this quantity is at most
    \begin{align*}
        \KL\Bigl(\law\Bigl(x + \frac{1}{\sqrt N} \sum_{i=1}^j \xi_i, \; j=0,1,\dotsc,N-1 \Bigr) \Bigm\Vert \law\Bigl(x + jv + \frac{1}{\sqrt N} \sum_{i=1}^j \xi_i, \; j=0,1,\dotsc,N-1 \Bigr)\Bigr)\,,
    \end{align*}
    since the last coordinates of these $N$-tuples are $x+\frac{1}{\sqrt N} \sum_{i=1}^N \xi_i$ and $y+\frac{1}{\sqrt N} \sum_{i=1}^N \xi_i$ respectively.
    Writing $\mu_j$ for the law of $x + \frac{1}{\sqrt N} \sum_{i=1}^j \xi_j$, we can now apply the chain rule for the KL divergence to bound this quantity by
    \begin{align*}
        \sum_{j=0}^{N-1} \int\KL\Bigl(\law\bigl(\zeta + \frac{1}{\sqrt N}\,\xi_{j+1}\bigr) \Bigm\Vert \law\bigl( \zeta + v + \frac{1}{\sqrt N}\,\xi_{j+1}\bigr) \Bigr) \,\mu_j(\D\zeta)
        = N\,\KL(\rho_h(\cdot) \mmid \rho_h(\cdot-v))\,.
    \end{align*}
    The claimed result now follows from~\eqref{eq:expand_kl} and~\eqref{eq:kl_dom_conv}.
\end{proof}  	

\section{Applications to Harnack inequalities}\label{sec:apps}

In this section, we give applications of our results to Harnack inequalities. Background on Harnack inequalities is provided in \S\ref{ssec:apps:background}. Originally, these inequalities were established in~\cite{Wang1997LSINoncompact} via semigroup methods, but since they are known to be dual to reverse transport inequalities (this duality is recalled in \S\ref{ssec:apps:duality}), they also follow as a consequence of our information-theoretic methods.
We exploit this to give new Harnack inequalities in \S\ref{ssec:apps:reverse} and \S\ref{ssec:apps:discrete}.

\subsection{Background on Harnack inequalities}\label{ssec:apps:background}

In this section, we briefly provide background on Harnack inequalities.
Let a Markov semigroup ${(P_t)}_{t\ge 0}$ and a compactly supported, positive, and smooth function $f : \R^d\to\R$ be given.
A parabolic Harnack inequality, in the strictest sense of the term, might refer to an inequality of the form
\begin{align}\label{eq:pure_harnack}
    P_t f(x) \le C(x,y,t)\, P_t f(y)\qquad\text{for all}~x,y\in\R^d\,.
\end{align}
While Harnack inequalities have played a central role in the theory of elliptic PDEs, parabolic Harnack inequalities turn out to be more subtle.
Namely, it is well-known that for typical diffusion processes (such as the Langevin diffusion~\eqref{eq:langevin}), an inequality such as~\eqref{eq:pure_harnack} cannot hold for all functions $f$.
To circumvent this, in the pioneering work~\cite{LiYau1986Parabolic}, P.\ Li and S.-T.\ Yau introduced an alternative Harnack inequality which instead compares the semigroup at two different times $t$ and $s+t$.
Their inequality, however, depends on the ambient dimension, which is at odds with the intrinsically infinite-dimensional character of many diffusion processes (in the sense of Bakry{--}\'{E}mery).
Hence, in~\cite{Wang1997LSINoncompact}, F.-Y.\ Wang introduced an infinite-dimensional Harnack inequality, obtained through commutation of the power function ${(\cdot)}^p$ ($p > 1$) with the
semigroup; we refer to these inequalities as \emph{power Harnack inequalities}:
\begin{align}\label{eq:power_harnack}
    {(P_t f(x))}^p
    &\le P_t(f^p)(y)\exp\Bigl(\frac{\alpha p\,\norm{x-y}^2}{2\,(p-1)\,(\exp(2\alpha t)-1)}\Bigr)\,, \qquad \forall \,x,y\in\R^d, \; t > 0\,.
\end{align}
By replacing $f$ with $f^{1/p}$ and letting $p \to \infty$ in~\eqref{eq:power_harnack}, one obtains the \emph{log Harnack inequality}
\begin{align}\label{eq:log_harnack}
    P_t(\log f)(x) \le \log P_t f(y) + \frac{\alpha\,\norm{x-y}^2}{2\,(\exp(2\alpha t) - 1)}\,.
\end{align}

\begin{table}
    \centering
    \begin{tabular}{ccc}
        \textbf{Name} & \textbf{Inequality} & \textbf{Notes} \\
         Pure Harnack & $P_t f(x) \le C(x,y,t)\, P_t f(y)$ & Usually fails to hold \\
         Power Harnack & $P_t f(x) \le C_p(x,y,t)\, {P_t(f^p)(y)}^{1/p}$ & $p \in (1,\infty)$ \\
         Log Harnack & $P_t f(x) \le C(x,y,t) +\log P_t(\exp f)(y)$ & \\
         Power Harnack & $P_t f(x) \ge C_p(x,y,t)\,{P_t(f^p)(y)}^{1/p}$ & $p \in (0,1)$ \\
         Reverse Harnack & $P_t f(x) \ge C_p(x,y,t)\,{P_t(f^p)(y)}^{1/p}$ & $p \in (-\infty, 0)$ (see \S\ref{ssec:apps:reverse})
    \end{tabular}
    \caption{\footnotesize{A family of parabolic Harnack inequalities.}}
    \label{tab:harnack}
\end{table}

In Table~\ref{tab:harnack}, we record these inequalities in a form which emphasizes their similarities. This table includes power Harnack inequalities for $p \in (0, 1)$, which are obviously equivalent to the $p \in (1,\infty)$ case up to replacing $p$ with $1/p$, as noted in~\cite{AnZon22BilateralHarnack}.
We also include \emph{reverse Harnack inequalities} corresponding to the case $p \in (-\infty, 0)$, named in analogy to the family of reverse hypercontractivity inequalities, which we explore in \S\ref{ssec:apps:reverse}.

\paragraph*{Equivalences with the curvature-dimension condition.}
Part of the importance of Harnack inequalities stems from their equivalence to a large family of other properties, including the curvature-dimension condition.
We list a few of these equivalences below, focusing on the Langevin semigroup for ease of exposition although the equivalences hold much more generally.
The reader can find further equivalences in the book~\cite{bakry2014analysis} or in~\cite[Theorem 2.3.3]{Wang14Diffusion}.

\begin{theorem}\label{thm:equiv_cd}
    Let ${(P_t)}_{t\ge 0}$ denote the Markov semigroup corresponding to the Langevin diffusion~\eqref{eq:langevin} with potential $V$, and let $\alpha\in\R$, $p, q > 1$. The following are equivalent.
    \begin{enumerate}
        \item (Curvature-dimension condition) $\nabla^2 V \succeq \alpha I$ on $\R^d$.
        \item (Wasserstein contraction) For any $t > 0$ and any $x,y\in \R^d$,
        \begin{align*}
            W_p(\delta_x P_t, \delta_y P_t) \le \exp(-\alpha t)\,\norm{x-y}\,.
        \end{align*}
        \item (Gradient bound) For any $f \in \mc C_{\rm c}^\infty(\R^d)$ and any $t > 0$,
        \begin{align*}
            \norm{\nabla P_t f} \le \exp(-\alpha t)\,P_t\norm{\nabla f}\,.
        \end{align*}
        \item (Power Harnack) The inequality~\eqref{eq:power_harnack} holds.
        \item (R\'enyi regularity) For any $t > 0$ and any $x, y \in \R^d$,
        \begin{align}\label{eq:rev_ren_transport}
            \Ren_q(\delta_x P_t \mmid \delta_y P_t) \le \frac{\alpha q\,\norm{x-y}^2}{2\,(\exp(2\alpha t) - 1)}\,.
        \end{align}
        \item (Log Harnack) The inequality~\eqref{eq:log_harnack} holds.
        \item (KL regularity) For any $t > 0$ and any $x,y\in\R^d$,
        \begin{align}\label{eq:rev_kl_transport}
            \KL(\delta_x P_t \mmid \delta_y P_t) \le \frac{\alpha\,\norm{x-y}^2}{2\,(\exp(2\alpha t) - 1)}\,.
        \end{align}
    \end{enumerate}
\end{theorem}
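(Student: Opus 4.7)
The seven conditions split into three groups: the geometric triangle $\{(1),(2),(3)\}$, the divergence regularity bounds $\{(5),(7)\}$, and the Harnack inequalities $\{(4),(6)\}$. The plan is to establish equivalences within each group by classical arguments, link the divergence and Harnack groups via the Fenchel--Legendre duality of \S\ref{ssec:apps:duality}, and connect everything to the curvature condition (1) through our Corollary~\ref{cor:langevin-cont} together with one reverse implication to close the cycle.

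For the intra-group equivalences, (1) $\Leftrightarrow$ (3) is the classical Bakry--\'{E}mery theorem, obtained by differentiating $s\mapsto P_s(\lvert\nabla P_{t-s} f\rvert)$ along the semigroup and applying $\Gamma_2$-calculus; and (3) $\Leftrightarrow$ (2) is Kuwada's duality, which converts gradient bounds on $P_t f$ into Wasserstein contraction of its dual action on measures via Kantorovich--Rubinstein. The equivalences (4) $\Leftrightarrow$ (5) and (6) $\Leftrightarrow$ (7) are established in \S\ref{ssec:apps:duality}: a R\'enyi reverse transport inequality is Fenchel--Legendre dual to a power Harnack inequality with $p = q/(q-1)$, and the KL/log Harnack pair arises as the limiting case $q\to 1$, $p\to\infty$. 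The forward implications (1) $\Rightarrow$ (5) and (1) $\Rightarrow$ (7) are exactly the content of Corollary~\ref{cor:langevin-cont}, obtained via the shifted composition rule; combining this with the intra- and inter-group links already yields (1) $\Rightarrow$ (2), (3), (4), (5), (6), (7).

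To close the cycle, the plan is to prove the reverse implication (7) $\Rightarrow$ (3): pass to the dual log Harnack inequality~\eqref{eq:log_harnack}, test on $f = 1 + \epsilon g$ for smooth compactly supported $g$ and small $\epsilon > 0$, and expand to second order in $\epsilon$. The zeroth- and first-order contributions cancel by Jensen's inequality together with integration against the semigroup, and the surviving second-order inequality, after rearrangement, collapses to the gradient bound $\lvert\nabla P_t g\rvert^2 \le e^{-2\alpha t}\,P_t(\lvert\nabla g\rvert^2)$, which is exactly condition (3) (up to the standard passage between the $L^1$ and $L^2$ forms of the gradient estimate). The expected main obstacle is this closing step: the information-theoretic techniques developed in the body of the paper are inherently one-sided, producing regularity bounds \emph{from} $\CD(\alpha,\infty)$, whereas recovering $\CD(\alpha,\infty)$ from them requires a perturbative analysis in the classical semigroup spirit of~\cite{Wang1997LSINoncompact}---justifying the Taylor expansion, controlling the remainder uniformly in $t > 0$, and verifying that the prefactor $\alpha/\{\exp(2\alpha t)-1\}$ in~\eqref{eq:log_harnack} indeed captures the leading second-order term exactly, so that the resulting infinitesimal estimate is sharp enough to reproduce the curvature lower bound.
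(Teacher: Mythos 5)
Your overall architecture coincides with how the paper actually treats Theorem~\ref{thm:equiv_cd}: the paper does not reprove the classical equivalences among (1)--(3) (it explicitly defers the well-known implications to the literature, e.g.\ \cite{bakry2014analysis} and \cite[Theorem 2.3.3]{Wang14Diffusion}), it obtains (4) $\Leftrightarrow$ (5) and (6) $\Leftrightarrow$ (7) by the duality of \S\ref{ssec:apps:duality} (note that the KL/log-Harnack pair is handled there directly via Donsker--Varadhan, not as a $q\to 1$ limit), and the forward implications (1) $\Rightarrow$ (5), (7) are Corollary~\ref{cor:langevin-cont}. So up to your closing step, your plan is the paper's proof (much of it by citation).

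The gap is in that closing step, (7) $\Rightarrow$ (3). First, with $f = 1+\epsilon g$ and $x \neq y$ held fixed, the first-order terms $\epsilon\,P_t g(x)$ and $\epsilon\,P_t g(y)$ do \emph{not} cancel (Jensen only handles $x=y$, where \eqref{eq:log_harnack} is vacuous); you must also scale $\norm{x-y} \asymp \epsilon$, say $y = x - \epsilon s v$. Second, and more seriously, once this is done the surviving second-order inequality, after optimizing over $s$ and $v$, is
\begin{align*}
    \norm{\nabla P_t g(x)}^2 \;\le\; \frac{\alpha}{\exp(2\alpha t)-1}\,\bigl(P_t(g^2)(x) - {(P_t g(x))}^2\bigr)\,,
\end{align*}
i.e.\ the reverse local Poincar\'e inequality \eqref{eq:local_poincare} --- not the gradient bound $\norm{\nabla P_t g}^2 \le \exp(-2\alpha t)\,P_t(\norm{\nabla g}^2)$ you claim. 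The right-hand side is a variance under $\delta_x P_t$, not $P_t(\norm{\nabla g}^2)$, and the prefactor is $\alpha/(\exp(2\alpha t)-1)$, not $\exp(-2\alpha t)$; passing from the former to the latter (equivalently, to $\CD(\alpha,\infty)$) is not the ``standard passage between the $L^1$ and $L^2$ gradient estimates'' but a genuinely different step, requiring a further infinitesimal argument in $t$ (a second-order Taylor expansion at $t=0$ extracting $\Gamma_2 \ge \alpha\Gamma$, in the spirit of the computation in \S\ref{app:semigp_reverse}) or an appeal to the known equivalence of the reverse local Poincar\'e inequality with $\CD(\alpha,\infty)$ \cite[Theorem 4.7.2]{bakry2014analysis}. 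This is precisely how the paper closes the loop: it derives \eqref{eq:local_poincare} from \eqref{eq:rev_ren_transport} with $q=2$ by a one-line Cauchy--Schwarz argument (the divergence-side dual of your expansion) and then cites the local Poincar\'e $\Leftrightarrow$ $\CD$ equivalence, alongside \cite{Wang10HarnackBoundary} for log-Harnack $\Rightarrow$ $\CD$. Your plan is fixable --- replace ``collapses to the gradient bound (3)'' by ``collapses to \eqref{eq:local_poincare}'' and then supply or cite the local-Poincar\'e-to-curvature step --- but as written the final implication does not go through.
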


For example,~\cite{Wang10HarnackBoundary} showed that the validity of~\eqref{eq:power_harnack} for any fixed $p > 1$ implies the log-Harnack inequality, which in turn implies the curvature-dimension condition. We give a dual version of the former statement, namely that~\eqref{eq:rev_ren_transport} implies~\eqref{eq:rev_kl_transport}, in \S\ref{app:renyi_implies_kl}.

\paragraph*{Implications of the Harnack inequalities.} The dimension-free Harnack inequalities encode a wealth of information about the semigroup and have been successfully applied to establish functional inequalities~\cite{Wang1997LSINoncompact, Wang1999HarnackLSI, bobgenled2001hypercontractivity, Wang01LSIConditions, Wang17HyperHam}, heat kernel estimates~\cite{AidKaw01ShortTime, GonWan01HeatKernel, AidZha02SmallTimeGroups}, higher-order eigenvalue estimates~\cite{Wang02SpectrumInfinite, GonWan04GromovHodge}, and ultracontractivity~\cite{Wang06HarnackApplications}; see, e.g.,~\cite[\S 1]{Wang14Diffusion} for further statements.

Let us briefly note that these properties correspond to regularity of Kolmogorov's \emph{backward} equation.
Indeed, we will show that the reverse transport inequality~\eqref{eq:rev_ren_transport} for $q=2$ yields
\begin{align}\label{eq:local_poincare}
    P_t(f^2) - {(P_t f)}^2 \ge \frac{\exp(2\alpha t) - 1}{\alpha}\,\norm{\nabla P_t f}^2\,, \qquad \forall\,t > 0\,.
\end{align}
Conversely,~\eqref{eq:local_poincare} is a form of the \emph{local Poincar\'e inequality} which is equivalent to the curvature-dimension condition (c.f.~\cite[Theorem 4.7.2]{bakry2014analysis}), and therefore implies back~\eqref{eq:rev_ren_transport} for any $q > 1$ by Theorem~\ref{thm:equiv_cd}.

The inequality~\eqref{eq:local_poincare} implies that the semigroup $P_t$ maps bounded measurable functions to differentiable functions, which is a smoothing property of the semigroup.
On the other hand, in~\cite{Wang14ShiftHarnack}, F.-Y.\ Wang introduced the family of \emph{shift Harnack inequalities} which instead capture the regularity of Kolmogorov's forward equation, and we will revisit them from the lens of information theory in a forthcoming work.

\begin{proof}[Proof of~\eqref{eq:local_poincare} from~\eqref{eq:rev_ren_transport} for $q=2$]
    For $h\in\R^d \setminus \{0\}$, by the Cauchy{--}Schwarz inequality,
    \begin{align*}
        \Bigl\lvert \frac{P_t f(x+h) - P_t f(x)}{\norm h}\Bigr\rvert
        &= \frac{1}{\norm h} \,\Bigl\lvert\int f\,\bigl( \frac{\D (\delta_{x+h} P_t)}{\D (\delta_x P_t)} - 1\bigr) \, \D (\delta_x P_t)\Bigr\rvert \\
        &\le \sqrt{\var_{\delta_x P_t}(f)}\, \frac{\sqrt{\chi^2(\delta_{x+h} P_t \mmid \delta_x P_t)}}{\norm h}\,.
    \end{align*}
    Applying~\eqref{eq:rev_ren_transport} and sending $\norm h\searrow 0$,
    \begin{align*}
        \norm{\nabla P_t f(x)}
        &\le \sqrt{\var_{\delta_x P_t}(f)\, \frac{\alpha}{\exp(2\alpha T) - 1}}\,.
    \end{align*}
    Square both sides of the inequality to recover the result.
\end{proof}

\subsection{Duality between Harnack and reverse transport inequalities}\label{ssec:apps:duality}

Since the equivalences in Theorem~\ref{thm:equiv_cd} are by now well-known, we will not prove them all in this paper. However, since we will later use the duality between reverse transport inequalities and Harnack inequalities to deduce the latter from the former, we recall this duality below.

\begin{proof}[Equivalence between~\eqref{eq:power_harnack} and~\eqref{eq:rev_ren_transport} for $q=\frac{p}{p-1}$, and between~\eqref{eq:log_harnack} and~\eqref{eq:rev_kl_transport}.]
    Let $P$ be a Markov kernel on a Polish space $\cX$ and write $C_p(x,y)$ for the best constant in the power Harnack inequality
    \begin{align*}
        P f(x) \le C_p(x,y)\,{P(f^p)(y)}^{1/p}\,.
    \end{align*}
   The best constant $C_p(x,y)$ is simply the operator norm $\norm{L_x}_{L^p(\delta_y P) \to \R}$ of the linear function $L_x : f \mapsto P f(x)$ because ${P(f^p)(y)}^{1/p} = \norm f_{L^p(\delta_y P)}$.
    By re-writing $P f(x) = \int f \, \D(\delta_x P) = \int f \,\frac{\D(\delta_x P)}{\D(\delta_y P)} \, \D (\delta_y P)$ and appealing to H\"older duality, this operator norm equals
    \begin{align*}
        C_p(x,y)
        =
        \norm{L_x}_{L^p(\delta_y P) \to \R}
        &= \Bigl\lVert \frac{\D(\delta_x P)}{\D(\delta_y P)} \Bigr\rVert_{L^q(\delta_y P)} = \exp\Bigl(\frac{q-1}{q}\,\Ren_q(\delta_x P \mmid \delta_y P)\Bigr)
    \end{align*}
    which yields the equivalence between~\eqref{eq:power_harnack} and~\eqref{eq:rev_ren_transport}.
    In particular, $\Ren_q(\delta_x P \mmid \delta_y P) \le \rho(x,y)$ if and only if $C_p(x,y) \le \exp(\frac{q-1}{q}\,\rho(x,y))$.
    \par The equivalence between~\eqref{eq:log_harnack} and~\eqref{eq:rev_kl_transport} follows along similar lines, replacing H\"older duality with the Donsker{--}Varadhan variational principle
    \begin{align*}
        \int f \, \D \mu \le \KL(\mu \mmid \nu) + \log \int \exp(f) \, \D \nu\,,
    \end{align*}
    with equality if and only if $f = \log \frac{\D\mu}{\D\nu}$.
    It yields that if $C_{\log}(x,y)$ is the best constant in the inequality
    \begin{align*}
        Pf(x) \le C_{\log}(x,y) + \log P(\exp f)(y)\,,
    \end{align*}
    then $\KL(\delta_x P \mmid\delta_y P) \le \rho(x,y)$ if and only if $C_{\log}(x,y) \le \rho(x,y)$.
\end{proof}

\begin{remark}[Distributional Harnack inequalities]\label{rem:distributional-harnack}
This duality argument extends from measures $\delta_x P$, $\delta_y P$ to measures $\mu P$, $\nu P$. Through the convexity principle in \S\ref{ssec:disc:convexity}, we have obtained reverse transport inequalities from arbitrary initializations $\mu$, $\nu$.
By dualizing these results, we obtain \emph{distributional} Harnack inequalities: under $\CD(\alpha,\infty)$, it holds that for $p > 1$, $t > 0$, and $x,y\in\R^d$,
\begin{align}\label{eq:dist_power_harnack}
    \norm f_{L^1(\mu P_t)}
    &\le \norm f_{L^p(\nu P_t)} \inf_{\gamma \in \Coup(\mu,\nu)} 
 \Bigl[\int \exp\Bigl(\frac{\alpha p\,\norm{x-y}^2}{2\,{(p-1)}^2\,(\exp(2\alpha t) - 1)}\Bigr)\,\gamma(\D x, \D y)\Bigr]^{(p-1)/p}
\end{align}
and
\begin{align}\label{eq:dist_log_harnack}
    \E_{\mu P_t} f
    &\le \log \E_{\nu P_t}\exp(f) + \frac{\alpha\,W_2^2(\mu,\nu)}{2\,(\exp(2\alpha t) - 1)}\,.
\end{align}
We show how to obtain these distributional Harnack inequalities via direct arguments (i.e., without dualizing standard Harnack inequalities, appealing to the convexity principle, and dualizing back), and similarly for the dual versions of the refined R\'enyi bounds of Theorem~\ref{thm:renyi_cvxty_principle_refined}, in \S\ref{app:dist_harnack}.
\end{remark}

\subsection{Reverse Harnack inequalities}\label{ssec:apps:reverse}

As we have shown in the preceding section, the family of reverse transport inequalities of order $q \in (1,\infty)$ is equivalent, via duality, to the power Harnack inequalities with exponent $p \in (1,\infty)$, with an additional equivalence between the case of $q= 1$ (i.e., the KL divergence) and the log-Harnack inequality.
In \S\ref{sec:disc}, we also established reverse transport inequalities of order $q \in (0, 1)$, for which the corresponding dual exponent $p \deq -q/(1-q)$ ranges in $(-\infty, 0)$.
In this section, we will show that we consequently obtain a family of \emph{reverse} Harnack inequalities.

The following lemma provides the key duality result.

\begin{lemma}[Duality]\label{lem:reverse-harnack-helper}
    Let $q \in (0,1)$ and $p = -\frac{q}{1-q} \in (-\infty,0)$. For any positive function $f$ and any probability measures $\mu$, $\nu$,
    \[
        \E_{\mu} f \geq (\E_{\nu} f^p)^{1/p} \exp\Bigl( -\frac{1}{\abs p}\, \Ren_q(\mu \mmid \nu) \Bigr)\,.
    \]
\end{lemma}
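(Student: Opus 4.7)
The plan is to derive this duality by an application of H\"older's inequality to the conjugate pair $(q,p)$, just as in the duality proof of \S\ref{ssec:apps:duality} but in the ``reverse'' regime where one exponent is negative. The key algebraic point is that with $p = -q/(1-q)$, one has $1/q + 1/p = 1/q - (1-q)/q = 1$, so $q$ and $p$ are H\"older conjugates (with $q \in (0,1)$ and $p \in (-\infty,0)$). This is exactly the conjugacy that governs the reverse H\"older inequality, and it is also the reason we should expect the R\'enyi-of-order-$q$ divergence to be dual to a power-$p$ expression in $f$, as in the statement.

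Concretely, I would assume $\mu \ll \nu$ for simplicity (the case $\mu \not\ll \nu$ is handled identically by replacing $d\nu$ with a common dominating measure $\lambda$ and using the densities of $\mu$, $\nu$ with respect to it, which is exactly the convention in which $\Ren_q$ is defined for $q<1$ in \S\ref{sec:prelim}) and write $\phi \deq d\mu/d\nu$. Apply the standard forward H\"older inequality on $\nu$ with exponents $1/q > 1$ and $1/(1-q) > 1$ to the factorization $\phi^q = (\phi f)^q \cdot f^{-q}$:
\[
    \int \phi^q \, d\nu
    \le \Bigl(\int \phi f \, d\nu\Bigr)^{q} \Bigl(\int f^{-q/(1-q)} \, d\nu\Bigr)^{1-q}
    = (\E_{\mu} f)^q \, (\E_{\nu} f^p)^{1-q},
\]
using $-q/(1-q) = p$ in the final equality. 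By the definition of R\'enyi divergence, the left-hand side equals $\exp\bigl((q-1)\,\Ren_q(\mu \mmid \nu)\bigr)$. Rearranging, and using $(q-1)/q = -1/|p|$ and $-(1-q)/q = 1/p$ (both immediate from $|p| = q/(1-q)$), yields the claim
\[
    \E_{\mu} f \ge \exp\Bigl(-\frac{1}{|p|}\, \Ren_q(\mu \mmid \nu)\Bigr) \, (\E_{\nu} f^p)^{1/p}.
\]

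I do not anticipate any substantive obstacle: the proof is essentially a single invocation of H\"older followed by rearrangement. The only point requiring care is the sign bookkeeping when raising to the negative exponent $p$, which is handled cleanly by the conjugacy identity above; boundary cases (e.g., $\Ren_q = +\infty$ or $\E_\nu f^p = +\infty$) render the claimed inequality trivial since $f > 0$ keeps the relevant quantities nonnegative.
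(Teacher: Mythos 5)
Your proof is correct and is essentially the paper's own argument: the factorization $\phi^q = (\phi f)^q \cdot f^{-q}$ is exactly the paper's multiply-and-divide by $g = f^q$, followed by the same H\"older inequality with exponents $1/q$ and $1/(1-q)$ and the same exponent bookkeeping. Your handling of the non-absolutely-continuous and degenerate cases is also consistent with the paper's conventions.
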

\begin{proof}
    Let $g \deq f^q$. To simplify the notation, we assume that $\mu \ll \nu$, although this assumption is not necessary for the proof. Then
    \begin{align*}
        \exp((q-1)\, \Ren_q(\mu \mmid \nu))
        = \E_{\nu}\bigl[ \frac{g}{g}\, \bigl( \frac{\D\mu}{\D\nu} \bigr)^q \bigr]
        \leq \E_{\mu} [ g^{1/q}]^q \, \E_{\nu} [g^{-1/(1-q)}]^{1-q}
        = \E_{\mu} [f]^q \, \E_{\nu} [f^{-q/(1-q)}]^{1-q}\,.
    \end{align*}
    Above, the first step is the definition of R\'enyi divergence and multiplying and dividing by the same quantity $g$,
    the second step is by H\"older's inequality with dual exponents $1/q$ and $1/(1-q)$, and the final step is by definition of $g$. Rearranging the above display, raising everything to the power of $1/q$, and simplifying by using the definition of $p$ completes the proof.
\end{proof}

From Theorem~\ref{thm:langevin-disc} and Lemma~\ref{lem:reverse-harnack-helper}, we immediately obtain the following inequalities.

\begin{theorem}[Reverse Harnack inequalities]
    Let ${(P_t)}_{t\ge 0}$ denote the Langevin semigroup corresponding to a potential satisfying $\nabla^2 V \succeq \alpha I$ on $\R^d$, where $\alpha\in\R$.
    Then, for all functions $f : \R^d\to\R_{>0}$, all $p \in (-\infty, 0)$, all $t > 0$, and all $x,y\in\R^d$, it holds that
    \begin{align}\label{eq:reverse_harnack}
        P_t f(x)
        &\ge {P_t(f^p)(y)}^{1/p} \exp\Bigl( - \frac{\alpha\,\norm{x-y}^2}{2\,\abs{p-1}\,(\exp(2\alpha t) - 1)}\Bigr)\,.
    \end{align}
\end{theorem}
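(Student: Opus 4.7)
The plan is to dualize the R\'enyi reverse transport inequality of order $q \in (0,1)$, following the program announced just before the statement. The argument assembles two ingredients already available in the paper: the R\'enyi regularity bound of Corollary~\ref{cor:langevin-cont} and the duality in Lemma~\ref{lem:reverse-harnack-helper}.

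First, I would specialize Corollary~\ref{cor:langevin-cont} to Dirac initializations $\mu = \delta_x$, $\nu = \delta_y$. The only coupling is then the product measure, so the infimum over $\gamma$ collapses and the integrand reduces to a single exponential; cancelling the $\log \circ \exp$ yields
\[
\Ren_q(\delta_x P_t \mmid \delta_y P_t) \;\le\; \frac{\alpha q\,\norm{x-y}^2}{2\,(\exp(2\alpha t)-1)}\,,
\]
valid for all $q \in (0,1) \cup (1,\infty)$ (interpreted in the limiting sense at $\alpha = 0$).

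Next, I would invoke Lemma~\ref{lem:reverse-harnack-helper} with $\mu = \delta_x P_t$ and $\nu = \delta_y P_t$, pairing $q \in (0,1)$ with its dual exponent $p = -q/(1-q) \in (-\infty, 0)$. By definition of the Markov semigroup, $\E_\mu f = P_t f(x)$ and $\E_\nu f^p = P_t(f^p)(y)$, so the lemma immediately produces a lower bound on $P_t f(x)$ of exactly the shape of~\eqref{eq:reverse_harnack}. The only remaining task is algebraic: a short manipulation shows $\abs p = q\,\abs{p-1}$, so substituting the R\'enyi bound from the first step into the factor $(1/\abs p)\,\Ren_q(\delta_x P_t \mmid \delta_y P_t)$ yields precisely the constant $\alpha\,\norm{x-y}^2 / \{2\,\abs{p-1}\,(\exp(2\alpha t) - 1)\}$.

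There is no real obstacle. The conceptual content is just the recognition that the $q \in (0,1)$ range of the R\'enyi regularity bound dualizes to the $p \in (-\infty, 0)$ range of Harnack exponents, extending the classical correspondence between $q \in (1,\infty)$ and $p \in (1,\infty)$ recalled in \S\ref{ssec:apps:duality}. The only technical point worth mentioning is the absolute continuity $\delta_x P_t \ll \delta_y P_t$ required in the $q < 1$ branch of Lemma~\ref{lem:reverse-harnack-helper}; this holds automatically for any $t > 0$ since the Langevin transition density is strictly positive on $\R^d$.
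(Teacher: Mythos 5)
Your proposal is correct and follows essentially the same route as the paper, which obtains~\eqref{eq:reverse_harnack} directly by combining the R\'enyi regularity bound for the Langevin semigroup (Theorem~\ref{thm:langevin-disc}, equivalently Corollary~\ref{cor:langevin-cont} specialized to Dirac initializations) with the duality Lemma~\ref{lem:reverse-harnack-helper}. Your algebraic identity $\abs{p} = q\,\abs{p-1}$ and the remark on absolute continuity match the intended argument, so there is nothing to add.
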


We remark that replacing $f$ with $f^{1/p}$ transforms the reverse Harnack inequality with exponent $p$ into the one with exponent $1/p$.
This fact corresponds to the relation $\Ren_q(\mu \mmid \nu) = \frac{q}{1-q}\,\Ren_{1-q}(\nu \mmid \mu)$ which holds for $q \in (0,1)$.

Once the form of the reverse Harnack inequalities are known, it is not difficult to prove them. For example, we show in \S\ref{app:semigp_reverse} that for a diffusion process on a Riemannian manifold for which the curvature-dimension condition $\CD(\alpha,\infty)$ holds, they can be established via the usual semigroup calculations.
In fact, they are implied by the power Harnack inequalities~\eqref{eq:power_harnack} through a simple application of Jensen's inequality: for $p > 1$, replacing $f$ with $f^{1/p}$ in~\eqref{eq:power_harnack} and interchanging $x$ and $y$, followed by applying Jensen's inequality to the convex function ${(\cdot)}^{-1}$, yields
\begin{align*}
    P_t f(x)
    &\ge {P_t(f^{1/p})(y)}^p \exp\Bigl( -\frac{\alpha p\,\norm{x-y}^2}{2\,(p-1)\,(\exp(2\alpha t) -1)}\Bigr) \\
    &\ge {P_t(f^{-1/p})(y)}^{-p} \exp\Bigl( -\frac{\alpha p\,\norm{x-y}^2}{2\,(p-1)\,(\exp(2\alpha t) -1)}\Bigr)\,,
\end{align*}
which is seen to be equivalent to~\eqref{eq:reverse_harnack} for exponent $-1/p \in (-\infty, 0)$.

On the other hand, we also show in \S\ref{app:semigp_reverse} that the reverse Harnack inequality~\eqref{eq:reverse_harnack} implies back $\CD(\alpha,\infty)$. We can therefore add two more equivalences to Theorem~\ref{thm:equiv_cd}.

\begin{theorem}
    Consider the setting of Theorem~\ref{thm:equiv_cd}, let $p \in (-\infty, 0)$, and let $q \in (0,1)$.
    Then, any of the statements of Theorem~\ref{thm:equiv_cd} are equivalent to any of the following.
    \begin{enumerate}\setcounter{enumi}{7}
        \item (Reverse Harnack) The inequality~\eqref{eq:reverse_harnack} holds.
        \item (R\'enyi regularity for $q \in (0,1)$) For any $t > 0$ and any $x,y\in\R^d$,
        \begin{align*}
            \Ren_q(\delta_x P_t \mmid \delta_y P_t)
            &\le \frac{\alpha q\,\norm{x-y}^2}{2\,(\exp(2\alpha t) -1)}\,.
        \end{align*}
    \end{enumerate}
\end{theorem}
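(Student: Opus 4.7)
The plan is to thread (8) and (9) into the equivalence cycle of Theorem~\ref{thm:equiv_cd} by proving $(1)\Rightarrow(9)\Leftrightarrow(8)$ and $(9)\Rightarrow(7)$, where $(1)$ and $(7)$ are the curvature-dimension condition and the KL regularity bound from Theorem~\ref{thm:equiv_cd}, respectively. The starting implication $(1)\Rightarrow(9)$ is already in hand: Corollary~\ref{cor:langevin-cont} derives the R\'enyi bound~\eqref{eq:langevin_renyi} from $\nabla^2 V\succeq \alpha I$ for the full range $q\in(0,1)\cup(1,\infty)$, and specializing to Dirac initializations $\mu=\delta_x$, $\nu=\delta_y$ (where the infimum over couplings is trivial) immediately yields $(9)$.

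For $(9)\Leftrightarrow(8)$, I would apply Lemma~\ref{lem:reverse-harnack-helper} with $\mu=\delta_x P_t$ and $\nu=\delta_y P_t$. The forward direction is direct: plugging the bound $D\deq \alpha q\,\norm{x-y}^2/(2\,(\exp(2\alpha t)-1))$ from $(9)$ into the lemma yields $P_t f(x)\ge P_t(f^p)(y)^{1/p}\exp(-D/\abs{p})$, and a short computation using $p=-q/(1-q)$ (which gives $q/\abs{p} = 1-q = 1/\abs{p-1}$) confirms $D/\abs{p}$ equals the exponent in $(8)$. For the converse $(8)\Rightarrow(9)$, I would exploit the fact that H\"older's inequality (the only inequality used in the proof of Lemma~\ref{lem:reverse-harnack-helper}) admits an explicit extremizer: taking $f=(\frac{\D\mu}{\D\nu})^{q-1}$, one verifies the identities $\E_\mu f = \E_\nu f^p = \exp((q-1)\,\Ren_q(\mu\mmid\nu))$, which makes the H\"older step tight and turns the lemma into an equality. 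Evaluating $(8)$ at this extremizer then upper-bounds $\Ren_q(\delta_x P_t\mmid\delta_y P_t)$ by $\abs{p}$ times the reverse Harnack constant, which is precisely the bound in $(9)$.

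The implication $(9)\Rightarrow(7)$ rests on the fact that $q\mapsto \Ren_q(\mu\mmid\nu)$ is nondecreasing (Theorem~\ref{thm:renyi_prop}) and satisfies $\lim_{q\nearrow 1}\Ren_q(\mu\mmid\nu) = \KL(\mu\mmid\nu)$; the latter is a routine Taylor expansion of $q\mapsto \log\int(\frac{\D\mu}{\D\nu})^q\,\D\nu$ at $q=1$. Taking the supremum over $q\in(0,1)$ in $(9)$ therefore converts the R\'enyi regularity bound directly into the KL regularity bound $(7)$, which by Theorem~\ref{thm:equiv_cd} is equivalent to $\CD(\alpha,\infty)$, closing the cycle.

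The main technical point I anticipate is the extremizer step in $(8)\Rightarrow(9)$: the candidate $f=(\frac{\D(\delta_x P_t)}{\D(\delta_y P_t)})^{q-1}$ is strictly positive (by positivity of the Langevin transition density) but generally unbounded, whereas $(8)$ is formulated for positive functions with implicit integrability. This is resolved by a standard truncation $f_R\deq f\wedge R$: applying $(8)$ to $f_R$ and sending $R\to\infty$, the left-hand side $P_t f_R(x)$ converges to $P_t f(x)$ by monotone convergence, while the finiteness of $\Ren_q(\mu\mmid\nu)$ (which we may assume without loss of generality, else $(9)$ is trivial) provides an integrable dominator for $f_R^p = (f\wedge R)^p$ on the $\nu$-side, allowing us to pass to the limit on the right-hand side as well.
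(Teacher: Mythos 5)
Your forward chain $\CD(\alpha,\infty)\Rightarrow(9)\Rightarrow(8)$ matches the paper's (Corollary~\ref{cor:langevin-cont} specialized to Dirac initializations, then Lemma~\ref{lem:reverse-harnack-helper}), and your converse $(8)\Rightarrow(9)$ via the H\"older extremizer $f=\bigl(\frac{\D\mu}{\D\nu}\bigr)^{q-1}$, with the truncation $f\wedge R$ and the dominator $f^p=\bigl(\frac{\D\mu}{\D\nu}\bigr)^q\in L^1(\nu)$, is correct and recovers the sharp constant; the paper never spells out this converse duality for $p<0$, so that piece is a genuine (and clean) alternative. A small slip there: if $\Ren_q(\mu\mmid\nu)=\infty$ then $(9)$ would be \emph{false}, not trivial, so the ``WLOG finiteness'' should instead be justified by the strict positivity of the transition density, which gives mutual absolute continuity and hence $\Ren_q<\infty$ for $q<1$.

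The genuine gap is the closing step $(9)\Rightarrow(7)$. As in Theorem~\ref{thm:equiv_cd}, the order $q\in(0,1)$ (and the exponent $p<0$) is \emph{fixed}; the whole content of the converse direction is that R\'enyi regularity of a \emph{single} order $q<1$ (equivalently, the reverse Harnack inequality for a single $p<0$) already implies $\CD(\alpha,\infty)$. Your argument ``take the supremum over $q\in(0,1)$ and use $\Ren_q\to\KL$ as $q\nearrow1$'' requires the bound in $(9)$ for $q$ arbitrarily close to $1$, which you only possess once CD is already known; for the single fixed $q$, monotonicity runs the wrong way ($\Ren_q\le\KL$), so no KL bound follows. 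This mirrors the $q>1$ situation, where passing from one fixed order to KL needs the weak-triangle-inequality composition argument of \S\ref{app:renyi_implies_kl}, and no analogue of that argument is available for $q<1$. The paper closes the loop differently: in \S\ref{app:semigp_reverse} it shows that~\eqref{eq:reverse_harnack} for a fixed $p\in(-\infty,0)$ implies $\CD(\alpha,\infty)$ directly, by applying the inequality to suitable test functions along a short path, expanding both sides to second order as $t\searrow0$, and invoking the Bochner--Weitzenb\"ock formula. Some such local, small-time argument is the missing ingredient in your proposal.
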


\subsection{Harnack inequalities for discretizations of diffusions}\label{ssec:apps:discrete}

We conclude by noting that the techniques of \S\ref{sec:disc}, which apply to discrete-time processes, combined with the duality arguments of \S\ref{ssec:apps:duality}, enable us to prove Harnack inequalities for iterations of discrete-time Markov kernels.
Such discrete-time processes arise naturally in the study of sampling algorithms based on time-discretizations of SDEs; processes which have limiting SDE descriptions (e.g., the CLT setting of \S\ref{ssec:ext:clt}); and more generally, discrete dynamical systems (e.g.,~\cite{DjeGuiWu04TransportInfo}).
Here, we provide a simple illustration by dualizing the result of \S\ref{ssec:ext:ito}. Similar results can also be deduced in analogous way for, e.g., the settings of \S\ref{ssec:disc:langevin} and \S\ref{ssec:ext:clt}.

\begin{cor}[Log-Harnack inequality]
    Consider the setting of Theorem~\ref{thm:mult_noise} pertaining to the discrete-time process~\eqref{eq:discretized_ito}.
    Let $\hat P_h$ denote the corresponding Markov transition kernel.
    Then, for all positive measurable functions $f : \R^d\to\R$ and all $x,y\in\R^d$,
    \begin{align*}
        \hat P_h^N f(x)
        &\le \Bigl(1+\frac{4\,(\beta-\alpha)\,{(\Lambda/\lambda)}^3\,h}{L^2}\Bigr)\,\frac{\alpha\,(1-\beta h/2)}{\lambda\,(L^{-2N}-1)} + \log \hat P_h^N(\exp f)(y)\,,
    \end{align*}
    where $L^2 \deq 1-2\alpha h + \beta^2 h^2$.
\end{cor}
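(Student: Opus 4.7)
The plan is to obtain the log-Harnack inequality as a direct consequence of Theorem~\ref{thm:mult_noise} via the duality between log-Harnack and KL reverse transport inequalities recalled in \S\ref{ssec:apps:duality}. Since Theorem~\ref{thm:mult_noise} already furnishes the sharp KL regularity bound for the discretized process~\eqref{eq:discretized_ito}, nothing further needs to be proved about the dynamics; only the dualization step remains.

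Concretely, I would first instantiate Theorem~\ref{thm:mult_noise} with Dirac initializations to obtain
\[
\KL(\delta_x \hat P_h^N \mmid \delta_y \hat P_h^N)
\le \Bigl(1+\frac{4\,(\beta-\alpha)\,(\Lambda/\lambda)^3\,h}{L^2}\Bigr)\,\frac{\alpha\,(1-\beta h/2)}{\lambda\,(L^{-2N}-1)}\,\norm{x-y}^2
\]
(so the stated bound should be read with the missing $\norm{x-y}^2$ factor attached to the first summand on the right-hand side). Next, I would apply the Donsker{--}Varadhan variational principle
\[
\int f \,\D\mu \le \KL(\mu \mmid \nu) + \log \int \exp(f) \, \D\nu\,,
\]
with $\mu = \delta_x \hat P_h^N$ and $\nu = \delta_y \hat P_h^N$. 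The left-hand side equals $\hat P_h^N f(x)$ and the second term on the right-hand side equals $\log \hat P_h^N(\exp f)(y)$, so combining the two displays yields the claim. This is exactly the form of the equivalence between~\eqref{eq:log_harnack} and~\eqref{eq:rev_kl_transport} derived in \S\ref{ssec:apps:duality}, applied here to the one-step-at-a-time kernel $\hat P_h^N$ instead of a continuous semigroup.

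There is no genuine obstacle: the hard work is already encapsulated in Theorem~\ref{thm:mult_noise}, and Donsker{--}Varadhan is a direct ingredient. The only minor technicality is ensuring applicability of the variational principle to an arbitrary positive measurable $f$. This is routine: apply the inequality to the truncations $f \wedge K$, note that $\hat P_h^N (f \wedge K)(x) \nearrow \hat P_h^N f(x)$ and $\hat P_h^N(\exp(f \wedge K))(y) \nearrow \hat P_h^N(\exp f)(y)$ by monotone convergence, and pass to the limit $K \to \infty$. If either side is $+\infty$ the inequality is vacuous, so the bound holds in full generality.
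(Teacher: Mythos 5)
Your proposal is correct and takes essentially the same route as the paper: the corollary is obtained precisely by specializing Theorem~\ref{thm:mult_noise} to Dirac initializations and dualizing via the Donsker--Varadhan variational principle, exactly as in the log-Harnack/KL equivalence of \S\ref{ssec:apps:duality}. Your observation that the first term on the right-hand side should carry the factor $\norm{x-y}^2$ is also correct; its absence in the corollary as printed is evidently a typo.
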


To our knowledge, such Harnack inequalities for discrete-time processes have not previously appeared in the literature.

	\paragraph*{Acknowledgements.} 
	We thank Guy Bresler, Yanjun Han, Jonathan Niles-Weed, Pablo Parrilo, Gabriel Peyr\'e, Yury Polyanskiy, Philippe Rigollet, Martin Wainwright, and Andre Wibisono for stimulating conversations. JMA acknowledges the support of an NYU Faculty Fellowship. SC acknowledges the support of the Eric and Wendy Schmidt Fund at the Institute for Advanced Study. 

  \newpage

	\appendix

\section{Deferred details}\label{app:deferred}

\subsection{Proof of the R\'enyi composition rule}\label{app:pf_composition}

For completeness, here we prove the R\'enyi composition rule in Theorem~\ref{thm:renyi_prop}. 
    First we note that the case $q=1$ follows from the KL chain rule, and the case $q = \infty$ follows from the case $q < \infty$ using monotonicity of the R\'enyi divergences in the order and by taking limits.
    Therefore, we focus on the cases $q \in (0,1)$ and $q \in (1,\infty)$.

    \textbf{\underline{Case $q > 1$.}} We may assume that $\bs\mu^X \ll \bs \nu^X$ and that $\bs\mu^{Y\mid X=x} \ll \bs \nu^{Y\mid X=x}$ for $\bs\mu^X$-a.e.\ $x\in\Omega$, since otherwise the inequality we wish to prove is trivial.
    Disintegration of measure yields
    \begin{align*}
        1+\Hell_q(\bs\mu^{X,Y} \mmid \bs \nu^{X,Y})
        &= \int \Bigl(\frac{\D\bs\mu^X}{\D\bs\nu^X}(x)\Bigr)^{q-1} \,\Bigl[\int \Bigl( \frac{\D\bs\mu^{Y\mid X=x}}{\D\bs\nu^{Y\mid X=x}}(y) \Bigr)^{q-1}\,\bs \mu^{Y\mid X=x}(\D y)\Bigr] \, \bs \mu^X(\D x) \\
        &= \int \Bigl(\frac{\D\bs\mu^X}{\D\bs\nu^X}(x)\Bigr)^{q-1} \,\bigl(1+\Hell_q(\bs\mu^{Y\mid X=x} \mmid \bs \nu^{Y\mid X=x})\bigr) \, \bs \mu^X(\D x) \\
        &\le \Bigl[\esssup{\bs\mu^X}_{x\in \Omega}{\bigl(1+\Hell_q(\bs\mu^{Y\mid X=x} \mmid \bs \nu^{Y\mid X=x})\bigr)}\Bigr]\, \bigl(1+\Hell_q(\bs\mu^X \mmid \bs \mu^Y)\bigr)\,.
    \end{align*}
    The result follows by applying the increasing function $\frac{1}{q-1} \log(\cdot)$ to both sides of the above inequality.

    \textbf{\underline{Case $q < 1$.}} Here, we must be more cautious as $\Ren_q(\mu\mmid \nu) < \infty$ no longer implies $\mu \ll \nu$.
    Consider the dominating measure $\bs\lambda^{X,Y} \deq \frac{1}{2}\,(\bs\mu^{X,Y}+\bs\nu^{X,Y})$, which admits the disintegration
    \begin{align*}
        \bs\lambda^{X,Y}(\D x, \D y)
        &= \underbrace{\frac{1}{2}\,(\bs\mu^X + \bs \nu^X)(\D x)}_{\eqqcolon \bs \lambda^X(\D x)}\,\underbrace{\Bigl[\frac{\D \bs\mu^X}{\D(\bs\mu^X+\bs\nu^X)}(x)\,\bs \mu^{Y\mid X=x}(\D y) + \frac{\D\bs\nu^X}{\D(\bs\mu^X + \bs \nu^X)}(x)\,\bs \nu^{Y\mid X=x}(\D y)\Bigr]}_{\eqqcolon \bs\lambda^{Y\mid X=x}(\D y)}\,.
    \end{align*}
    Also, let
    \begin{align*}
        \Omega' \deq \Bigl\{ \frac{\D\bs\mu^X}{\D\bs\lambda^X} \wedge \frac{\D\bs\nu^X}{\D\bs\lambda^X} > 0\Bigr\}\,.
    \end{align*}
    From these expressions, $\bs\mu^{Y\mid X=x} \ll \bs\lambda^{Y\mid X=x}$ and $\bs\nu^{Y\mid X=x} \ll \bs\lambda^{Y\mid X=x}$ for $\bs\lambda^X$-a.e.\ $x\in\Omega'$.
    Therefore,
    \begin{align*}
        &1-\Hell_q(\bs \mu^{X,Y} \mmid \bs \nu^{X,Y})
        = \int_{\Omega'} \Bigl( \frac{\D \bs \mu^{X,Y}}{\D \bs\lambda^{X,Y}}\Bigr)^q \, \Bigl( \frac{\D\bs\nu^{X,Y}}{\D\bs\lambda^{X,Y}}\Bigr)^{1-q}\, \D \bs \lambda^{X,Y} \\
        &\qquad = \int_{\Omega'} \Bigl( \frac{\D \bs \mu^{X}}{\D \bs\lambda^{X}}(x)\Bigr)^q \, \Bigl( \frac{\D\bs\nu^{X}}{\D\bs\lambda^{X}}(x)\Bigr)^{1-q}\,\Bigl[\int \Bigl( \frac{\D \bs \mu^{Y\mid X=x}}{\D \bs\lambda^{Y\mid X=x}}(y)\Bigr)^q \, \Bigl( \frac{\D\bs\nu^{Y\mid X=x}}{\D\bs\lambda^{Y\mid X=x}}(y)\Bigr)^{1-q}\,\bs \lambda^{Y\mid X=x}(\D y)\Bigr]\, \bs \lambda^{X}(\D x) \\
        &\qquad = \int_{\Omega'} \Bigl( \frac{\D \bs \mu^{X}}{\D \bs\lambda^{X}}(x)\Bigr)^q \, \Bigl( \frac{\D\bs\nu^{X}}{\D\bs\lambda^{X}}(x)\Bigr)^{1-q}\,\bigl(1-\Hell_q(\bs\mu^{Y\mid X=x} \mmid \bs \nu^{Y\mid X=x})\bigr)\, \bs \lambda^{X}(\D x) \\
        &\qquad \ge \Bigl[\essinf{\bs\lambda^X}_{x\in\Omega'}{\bigl(1-\Hell_q(\bs\mu^{Y\mid X=x} \mmid \bs \nu^{Y\mid X=x})\bigr)}\Bigr] \,\bigl(1-\Hell_q(\bs \mu^X \mmid \bs \nu^X)\bigr)\,.
    \end{align*}
    Note that by definition, $\bs\lambda^X|_{\Omega'} \ll \bs\mu^X \wedge \bs \nu^X$, hence we can replace the $\bs\lambda^X$-essential infimum with the $(\bs\mu^X\wedge \bs\nu^X)$-essential infimum.
    The proof is concluded by applying the \emph{decreasing} function $\frac{1}{q-1} \log(\cdot)$ to both sides of this inequality.

\subsection{Optimizing the shifts}\label{app:calc}

\subsubsection{Discrete-time argument via single-variable calculus}\label{app:calc-discrete}

Here we explicitly provide the unique optimal solution of the shift optimization problem in the proof of Theorem~\ref{thm:discrete-main},
recalled here for convenience:
\begin{align}
    V_{N}  \deq  \min_{\substack{\eta_0, \dots, \eta_{N-1}\ge 0 \\ \text{s.t. }\eta_{N-1}=1}} \sum_{n=0}^{N-1} L^{2n} \eta_n^2 \prod_{k=0}^{n-1} (1 - \eta_k)^2\,.
    \label{eq:disc-opt:app}
\end{align}
Below, denote $R_k  \deq  \frac{L^{-2}-1}{L^{-2\,(k+1)}-1} $.

\begin{lemma}\label{lem:shift-opt:discrete}
    For any horizon $N \in \N$ and any Lipschitz constant $L > 0$, the unique optimal solution to~\eqref{eq:disc-opt:app} is $\eta_i = R_{N-1-i}$ for all $i \in \{0,\dots,N-1\}$. The corresponding optimal value is $V_N = R_{N-1}$.
\end{lemma}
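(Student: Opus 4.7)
The plan is to solve~\eqref{eq:disc-opt:app} by dynamic programming on $N$. The decisive observation is that the objective admits a recursive factorization: peeling off the $n=0$ term and re-indexing yields
\begin{align*}
    \sum_{n=0}^{N-1} L^{2n}\,\eta_n^2 \prod_{k=0}^{n-1}(1-\eta_k)^2
    &= \eta_0^2 + L^2\,(1-\eta_0)^2 \sum_{m=0}^{N-2} L^{2m}\,\eta_{m+1}^2 \prod_{k=0}^{m-1}(1-\eta_{k+1})^2\,,
\end{align*}
where the inner sum is precisely the objective of the $(N-1)$-variable problem in $(\eta_1,\ldots,\eta_{N-1})$, with the constraint $\eta_{N-1}=1$ now imposed on its last coordinate. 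Hence the value function obeys the Bellman recursion
\begin{align*}
    V_N = \min_{\eta_0 \ge 0}\bigl\{\eta_0^2 + L^2\,(1-\eta_0)^2\,V_{N-1}\bigr\}\,, \qquad V_1 = 1\,.
\end{align*}

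Next I would solve the one-dimensional inner problem. It is a strictly convex quadratic in $\eta_0$, so it admits a unique minimizer $\eta_0^\star = L^2 V_{N-1}/(1+L^2 V_{N-1})$. Substituting back and using the first-order condition $\eta_0^\star = L^2(1-\eta_0^\star)\,V_{N-1}$ gives the clean identity $V_N = \eta_0^\star$. To close the induction, it then suffices to verify the algebraic identity
\begin{align*}
    \frac{L^2 R_{N-2}}{1+L^2 R_{N-2}} = R_{N-1}\,,
\end{align*}
which is a one-line computation after substituting the definition $R_k = (L^{-2}-1)/(L^{-2\,(k+1)}-1)$; the base case $V_1 = R_0 = 1$ is immediate (with the degenerate point $L=1$ handled by the limit $R_k = 1/(k+1)$). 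This proves both $V_N = R_{N-1}$ and $\eta_0^\star = R_{N-1}$.

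A second induction then extracts the entire optimal policy $\eta_i^\star = R_{N-1-i}$: having fixed the optimal first shift $\eta_0^\star = R_{N-1}$, the remaining problem in $(\eta_1,\ldots,\eta_{N-1})$ is exactly an instance of the $(N-1)$-variable optimization, whose optimum is $(R_{N-2},R_{N-3},\ldots,R_0)$ by the inductive hypothesis. Uniqueness propagates from the strict convexity of each one-dimensional subproblem. The nonnegativity constraint $\eta_i^\star \ge 0$ turns out to be nonbinding because one checks directly that $R_k \in (0,1]$ for all $k$ and all $L>0$ (the numerator and denominator carry the same sign), so the interior-minimization step is legitimate. I do not anticipate a conceptual obstacle; the only care required is the indexing bookkeeping and handling the $L=1$ limit.
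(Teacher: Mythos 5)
Your proposal is correct and follows essentially the same route as the paper: an induction on the horizon via the recursion $V_N=\min_{\eta_0\ge 0}\{\eta_0^2+L^2\,(1-\eta_0)^2\,V_{N-1}\}$, the one-dimensional strictly convex quadratic solved in closed form, and the algebraic identity for $R_k$ (the paper writes it as $R_N=\frac{1}{1+(L^2R_{N-1})^{-1}}$, which is your identity with shifted index). Your explicit factorization of the objective and the check that the constraints are nonbinding only make explicit what the paper leaves implicit.
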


\begin{proof}
    We prove by induction on $N$. The base case $N=1$ is trivial. For the induction, observe that
    \begin{align*}
        V_{N+1} = 
        \eta_0^2 + L^2 V_N\, (1 - \eta_0)^2\,,
    \end{align*}
    by using the unique optimal $\eta_0, \dots, \eta_{N-1}$ for the problem of horizon $N$ as the respective solutions for $\eta_1, \dots, \eta_N$ for the problem of horizon $N+1$. It remains to solve for $\eta_0$. This follows immediately from the simple observation stated below and the identity $R_N = \frac{1}{1+{(L^2R_{N-1})}^{-1}}$.
\end{proof}

 \begin{obs}
    For any $a \geq 0$, the optimization problem
    \[  
        \min_{\eta\ge 0}{\{\eta^2 + a\, (1 - \eta)^2\}}
    \]
    has optimal value $\frac{1}{1+a^{-1}}$, achieved at the unique optimal solution $\eta = \frac{1}{1+a^{-1}}$.
\end{obs}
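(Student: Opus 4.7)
The plan is to treat this as an unconstrained minimization of a strictly convex quadratic, and then to verify that the unconstrained minimizer happens to be feasible (i.e., non-negative). Write $f(\eta) \deq \eta^2 + a\,(1-\eta)^2$. For $a > 0$, I would first note that $f''(\eta) = 2\,(1+a) > 0$, so $f$ is strictly convex and admits a unique unconstrained minimizer on $\R$, determined by the first-order condition $f'(\eta) = 2\eta - 2a\,(1-\eta) = 0$. Solving this gives $\eta^\star = \frac{a}{1+a} = \frac{1}{1+a^{-1}}$, and substituting back yields
\[
f(\eta^\star) = \frac{a^2}{(1+a)^2} + \frac{a}{(1+a)^2} = \frac{a\,(a+1)}{(1+a)^2} = \frac{a}{1+a} = \frac{1}{1+a^{-1}}\,,
\]
matching the claimed optimal value. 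Since $\eta^\star \geq 0$ whenever $a \geq 0$, the constraint $\eta \geq 0$ is inactive, so $\eta^\star$ is also the unique minimizer of the constrained problem.

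For the boundary case $a = 0$, I would handle the statement separately: the objective reduces to $\eta^2$, which is uniquely minimized on $\eta \geq 0$ at $\eta = 0$ with value $0$. This is consistent with the formula under the natural convention $\frac{1}{1+0^{-1}} = 0$ used in the proof of Lemma~\ref{lem:shift-opt:discrete}.

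The main---indeed the only---obstacle is the minor bookkeeping of the degenerate case $a = 0$ and the one-line check that the unconstrained optimum $\eta^\star = \frac{a}{1+a}$ satisfies the non-negativity constraint. No substantive difficulty arises: the problem reduces to elementary single-variable calculus on a strictly convex quadratic, with uniqueness supplied by strict convexity.
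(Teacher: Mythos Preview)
Your proposal is correct and takes essentially the same approach as the paper, which simply says ``This is a straightforward calculation using single-variable calculus.'' Your writeup is in fact more detailed than the paper's one-line proof, including the explicit handling of the $a=0$ edge case and the verification that the unconstrained minimizer is feasible.
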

\begin{proof}
    This is a straightforward calculation using single-variable calculus.
\end{proof}

\subsubsection{Continuous-time argument via calculus of variations}\label{app:calc-cont}

Here, we provide the calculus of variations derivation of the process $\{\eta_t\}_{t\in [0,T]}$ used in \S\ref{sec:cont}.
Recall that we wish to minimize the functional $\ms F$ defined by
\begin{align*}
    \ms F(\eta)
    &\deq \int_0^T \eta_t^2 \exp\Bigl(-2\alpha t - 2\int_0^t \eta_s \, \D s\Bigr)\,\D t\,.
\end{align*}
Recall also that the first variation $\delta\ms F(\eta) : [0,T]\to\R$ satisfies, by definition,
\begin{align*}
    \lim_{\varepsilon\searrow 0} \frac{\ms F(\eta+\varepsilon\chi) - \ms F(\eta)}{\varepsilon}
    &= \int_0^T \delta \ms F(\eta) \, \chi\,, \qquad\text{for every perturbation}~\chi : [0,T]\to\R\,.
\end{align*}
It suffices to assume $\alpha \ne 0$.
Elementary calculus yields
\begin{align*}
	\delta\ms F(\eta)(t)
	&= 2\eta_t \exp\Bigl(-2\int_0^t (\alpha+\eta_s)\,\D s\Bigr)
	- 2\int_t^T \eta_s^2 \exp\Bigl(-2\int_0^s (\alpha+\eta_r)\,\D r\Bigr) \, \D s\,.
\end{align*}
Setting this to zero and differentiating, we obtain the differential equation
\begin{align*}
    \dot\eta_t - 2\alpha\eta_t - \eta_t^2 = 0\,.
\end{align*}
If we set $\theta_t \deq \eta_t \exp(-2\alpha t)$, then $\dot\theta_t = (\dot\eta_t - 2\alpha \eta_t)\exp(-2\alpha t) = \eta_t^2 \exp(-2\alpha t)$, or $\dot\theta_t = \theta_t^2 \exp(2\alpha t)$.
Integration yields
\begin{align*}
    \frac{1}{\theta_0} - \frac{1}{\theta_t} =
    \int_0^t \frac{\dot \theta_s}{\theta_s^2}\, ds
    = \int_0^t \exp(2\alpha s) \, \D s
	= \frac{\exp(2\alpha t) - 1}{2\alpha}\,.
\end{align*}
Therefore,
\begin{align*}
    \eta_t
    &= \frac{2\alpha \theta_0 \exp(2\alpha t)}{2\alpha - \theta_0\,(\exp(2\alpha t) - 1)}\,.
\end{align*}
Since we require $\eta_t \nearrow \infty$ as $t\nearrow T$, we set $\theta_0 = 2\alpha/(\exp(2\alpha T) - 1)$, i.e.,
\begin{align*}
	\eta_t
	&= \frac{2\alpha}{\exp(2\alpha\,(T-t))-1}\,.
\end{align*}
Let $H_t \deq \int_0^t \eta_s \, \D s = \log \frac{1-\exp(-2\alpha T)}{1-\exp(-2\alpha\,(T-t))}$.
Then,
\begin{align*}
	\int_0^T \eta_t^2 \exp(-2\alpha t - 2H_t) \, \D t
	&= {(2\alpha)}^2 \int_0^T \frac{\exp(-2\alpha t)}{{\left(\exp(2\alpha\,(T-t))-1\right)}^2}\, \Bigl(\frac{1-\exp(-2\alpha\,(T-t))}{1-\exp(-2\alpha T)} \Bigr){\Bigsp}^2 \, \D t \\
	&= \Bigl( \frac{2\alpha}{\exp(2\alpha T) - 1} \Bigr){\Bigsp}^2 \int_0^T\exp(2\alpha t) \, \D t
	= \frac{2\alpha}{\exp(2\alpha T) - 1}\,.
\end{align*}

\subsection{Tightness}\label{app:tightness}

Here we show tightness of the R\'enyi regularity bounds and associated finiteness thresholds in \S\ref{sec:disc} and \S\ref{sec:cont} by explicitly computing these quantities for the semigroup ${(P_t)}_{t\ge 0}$ corresponding to the Ornstein{--}Uhlenbeck (OU) process
\begin{align}
    \D X_t = -\alpha X_t \,\D t + \sqrt{2}\, \D B_t\,\label{eq:tightness-ou}
\end{align}
and the associated discrete-time Markov kernel $\hat P_h$, defined as
\begin{align}
    \hat{P}_h(x,\cdot) = Q_{2h}((1 - \alpha h)\,x, \cdot)\,,\label{eq:tightness-ou-discrete}
\end{align}
where ${(Q_{t})}_{t\ge 0}$ is the heat semigroup. For brevity, we compute the relevant quantities only
for curvature-dimension $\alpha \neq 0$ (since the case $\alpha = 0$ follows by a limiting argument or by directly redoing the calculation in a completely analogous way), and for Dirac initializations (since this clearly implies tightness for general initializations).

\subsubsection{Regularity}

\paragraph*{Tightness of the discrete-time R\'enyi regularity.} This calculation closely follows the lower bound for the discrete mixing time of discretized Langevin in~\cite{AltTal23Langevin}. Consider the Markov kernel $\hat{P}_h$ in~\eqref{eq:tightness-ou-discrete} and denote $L  \deq  1 - \alpha h$ for shorthand. An explicit computation gives 
\[
    \delta_x \hat{P}_h^N = \cN\Bigl( L^N x,\, 2h\,\frac{1 - L^{2N}}{1-L^2} \Bigr)
    \qquad
    \text{and} 
    \qquad
    \delta_y \hat{P}_h^N = \cN\Bigl( L^N y, 2h\,\frac{1 - L^{2N}}{1-L^2} \Bigr)\,.
\]
Thus by the identity for the R\'enyi divergence between Gaussians (Theorem~\ref{thm:renyi_prop}), 
\begin{align*}
    \Ren_q( \delta_x \hat{P}_h^N \mmid \delta_y \hat{P}_h^N)
	=
    \frac{q\,(1-L^2)}{4h\,(L^{-2N} - 1)}\, \|x-y\|^2\,.
\end{align*}
This exactly matches the discrete-time R\'enyi regularity bound in Theorem~\ref{thm:langevin-disc}.

\paragraph*{Tightness of the continuous-time R\'enyi regularity.} While this calculation follows from the limit $h \to 0$ of the discrete-time tightness (above), we prefer to compute the relevant quantities directly in continuous time. Let $P_T$ be as defined in~\eqref{eq:tightness-ou}. An explicit computation gives
\[
    \delta_x P_T = \cN \Bigl( \exp(-\alpha T)\, x, \,\frac{1 - \exp(-2\alpha T)}{\alpha}\Bigr)
    \qquad
    \text{and}
    \qquad
    \delta_y P_T = \cN \Bigl( \exp(-\alpha T)\, y,\, \frac{1 - \exp(-2\alpha T)}{\alpha}\Bigr)
    \,.
\]
Thus by the identity for the R\'enyi divergence between Gaussians (Theorem~\ref{thm:renyi_prop}),  
\begin{align*}
    \Ren_q(\delta_x P_T \mmid \delta_y P_T)
	=
    \frac{\alpha q}{2\,(\exp(2\alpha T) - 1)}\,\|x-y\|^2\,.
\end{align*}
This exactly matches the continuous-time R\'enyi regularity bound in Corollary~\ref{cor:langevin-cont} (and also proved in \S\ref{sec:cont} and \S\ref{app:renyi} below via related approaches).

\subsubsection{Finiteness threshold}\label{app:tightness:finiteness}

Here we provide details for Remark~\ref{rem:finiteness} regarding the finiteness threshold for R\'enyi regularity.

An example where the \emph{second} refined regularity bound in Theorem~\ref{thm:langevin-refined} exactly captures the finiteness threshold (and the unrefined bound in Corollary~\ref{cor:langevin-cont} does not) is the OU process~\eqref{eq:tightness-ou} where $\mu = \delta_0$ and the other initial distribution $\nu$ has bounded second moment but tails that are heavier than sub-Gaussian. 
The second moment condition ensures that the second refined regularity bound in Theorem~\ref{thm:langevin-refined}  is finite for every $T > T_0 = 0$, and thus so is the R\'enyi regularity $\Ren_q(\mu P_T \mmid \nu P_T)$. Whereas this is false for the unrefined bound in Corollary~\ref{cor:langevin-cont} since $\nu$ has non-sub-Gaussian tails.

An example where the \emph{first} refined regularity bound in Theorem~\ref{thm:langevin-refined} exactly captures the finiteness threshold (and the unrefined bound in Corollary~\ref{cor:langevin-cont} does not) is the OU process~\eqref{eq:tightness-ou} with initializations $\mu = \cN(0,\sig^2)$ and $\nu = \delta_0$. To analyze this, we make use of two well-known identities.
The first is a formula for the R\'enyi divergence between two Gaussian distributions with unequal variances~\cite[page 4]{van2014renyi}, which generalizes the isotropic case in Theorem~\ref{thm:renyi_prop}.
The second is a formula for the moment generating function of a chi-squared random variable. Of interest to us are the finiteness conditions in these identities.

\begin{lemma}[R\'enyi divergence between Gaussians]\label{lem:renyi-gaussians-full}
    Denote $\sig_q^2  \deq  (1-q)\, \sig_0^2 + q\,\sig_1^2$. Then for any $q \in (0,1) \cup (1,\infty)$,
    \[
        \Ren_q\bigl(\cN(\mu_0,\sig_0^2) \bigm\Vert \cN(\mu_1, \sig_1^2)\bigr)
        =
        \frac{q\,{(\mu_0 - \mu_1)}^2}{2\sig_q^2} + \frac{1}{1-q} \log \frac{\sig_q}{\sig_0^{1-q} \sig_1^q}
    \]
    if $\sig_q^2 > 0$, and is infinite otherwise.
\end{lemma}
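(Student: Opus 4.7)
The proof is a direct computation: substitute the Gaussian densities into the integral definition of the R\'enyi divergence, combine the two quadratic exponents into a single quadratic in $x$, complete the square, and evaluate the resulting Gaussian integral.

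First, I would handle both cases $q\in(0,1)$ and $q>1$ uniformly by starting from the formula
\[
    \Ren_q\bigl(\cN(\mu_0,\sig_0^2) \bigm\Vert \cN(\mu_1,\sig_1^2)\bigr)
    = \frac{1}{q-1}\log\int p_0(x)^q\, p_1(x)^{1-q}\,\D x\,,
\]
where $p_i$ is the density of $\cN(\mu_i,\sig_i^2)$ with respect to Lebesgue measure (this is valid in both regimes since the two Gaussians are mutually absolutely continuous). Pulling out the normalizing constants gives
\[
    p_0(x)^q p_1(x)^{1-q} = \frac{1}{\sqrt{2\pi}\,\sig_0^q \sig_1^{1-q}}\exp\Bigl(-\tfrac{1}{2}\bigl[a\,(x-\mu_0)^2 + b\,(x-\mu_1)^2\bigr]\Bigr)\,,
\]
with $a \defeq q/\sig_0^2$ and $b\defeq(1-q)/\sig_1^2$.

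Next I would combine the quadratic via the identity
\[
    a\,(x-\mu_0)^2 + b\,(x-\mu_1)^2 = (a+b)\,\Bigl(x - \tfrac{a\mu_0+b\mu_1}{a+b}\Bigr)^2 + \frac{ab}{a+b}\,(\mu_0-\mu_1)^2\,,
\]
valid whenever $a+b\neq 0$. A short calculation shows
\[
    a+b = \frac{\sig_q^2}{\sig_0^2 \sig_1^2}\,, \qquad \frac{ab}{a+b} = \frac{q\,(1-q)}{\sig_q^2}\,.
\]
The Gaussian integral in $x$ converges if and only if $a+b>0$, i.e., $\sig_q^2>0$; when this fails (only possible when $q>1$) the integral is $+\infty$, which under the prefactor $1/(q-1)>0$ gives $\Ren_q=+\infty$ as claimed.

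When $\sig_q^2>0$, carrying out the Gaussian integral yields
\[
    \int p_0^q p_1^{1-q}\,\D x = \frac{\sig_0^{1-q}\sig_1^q}{\sig_q}\,\exp\Bigl(-\frac{q\,(1-q)\,(\mu_0-\mu_1)^2}{2\sig_q^2}\Bigr)\,.
\]
Taking $\frac{1}{q-1}\log(\cdot)$ of both sides and regrouping signs gives exactly the stated formula. The only subtle point is the finiteness threshold, but the sign of $a+b$ transparently matches the sign of $\sig_q^2$, so there is no real obstacle; everything else is routine algebra.
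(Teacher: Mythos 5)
Your computation is correct, and it is essentially the canonical derivation: write $\Ren_q$ as $\frac{1}{q-1}\log\int p_0^q p_1^{1-q}\,\D x$ (legitimate in both regimes since the two Gaussians are mutually absolutely continuous), complete the square with $a=q/\sig_0^2$, $b=(1-q)/\sig_1^2$, note $a+b=\sig_q^2/(\sig_0^2\sig_1^2)$ and $ab/(a+b)=q\,(1-q)/\sig_q^2$, and evaluate the Gaussian integral; the algebra you report, including the final regrouping into $\frac{q\,(\mu_0-\mu_1)^2}{2\sig_q^2}+\frac{1}{1-q}\log\frac{\sig_q}{\sig_0^{1-q}\sig_1^q}$, checks out. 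The only comparison to make is that the paper does not prove this lemma at all: it is quoted with a citation to the van Erven--Harrem\"oes survey, so your write-up supplies a self-contained argument where the paper relies on the literature. You also correctly handle the point the paper actually cares about, namely the finiteness threshold: divergence of the integral occurs exactly when $a+b\le 0$, i.e.\ $\sig_q^2\le 0$, which can only happen for $q>1$, where the positive prefactor $1/(q-1)$ turns it into $\Ren_q=+\infty$. One cosmetic nit: your completed-square identity requires $a+b\neq 0$, so the boundary case $a+b=0$ should be dispatched separately (the exponent is then affine in $x$, so the integral still diverges); this is a one-line observation and not a gap in substance.
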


\begin{lemma}[Moment generating function for $\chi^2$]\label{lem:mgf-chi}
    For any $\lambda,\sigma > 0$,
    \[
        \E_{X \sim \cN(0,\sigma^2)} \exp\Bigl( \frac{X^2}{\lambda^2} \Bigr) = \sqrt{\frac{\lambda^2}{\lambda^2 - 2\sigma^2}}
    \]
    if $\lambda^2 > 2\sigma^2$, and is infinite otherwise.
\end{lemma}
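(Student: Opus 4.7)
The plan is to compute the Gaussian integral directly by combining the two exponentials and reducing to a Gaussian normalization constant. This is a routine computation, but we must be careful to track the finiteness condition.

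First I would write out
\[
    \E_{X \sim \cN(0,\sigma^2)} \exp\Bigl( \frac{X^2}{\lambda^2} \Bigr)
    = \int_{-\infty}^\infty \frac{1}{\sqrt{2\pi\sigma^2}}\, \exp\Bigl( -\frac{x^2}{2\sigma^2} + \frac{x^2}{\lambda^2}\Bigr) \, \D x
    = \int_{-\infty}^\infty \frac{1}{\sqrt{2\pi\sigma^2}}\, \exp\Bigl( -\frac{x^2}{2}\cdot \frac{\lambda^2 - 2\sigma^2}{\sigma^2\lambda^2}\Bigr)\, \D x\,,
\]
combining the two quadratic exponents into a single one. The integrand is a nonnegative function of $x^2$, so the integral is well-defined in $[0,\infty]$.

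Next I would split into two cases based on the sign of $\lambda^2 - 2\sigma^2$. If $\lambda^2 > 2 \sigma^2$, set $\tau^2 \defeq \frac{\sigma^2 \lambda^2}{\lambda^2 - 2 \sigma^2} > 0$, so that the integral becomes
\[
    \frac{1}{\sqrt{2\pi\sigma^2}} \int_{-\infty}^\infty \exp\Bigl(-\frac{x^2}{2\tau^2}\Bigr)\,\D x
    = \frac{\sqrt{2\pi\tau^2}}{\sqrt{2\pi\sigma^2}} = \frac{\tau}{\sigma} = \sqrt{\frac{\lambda^2}{\lambda^2-2\sigma^2}}\,,
\]
using the normalization of the $\cN(0,\tau^2)$ density. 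If on the other hand $\lambda^2 \le 2\sigma^2$, the coefficient of $-x^2/2$ in the exponent is non-positive, so the integrand is bounded below by $\frac{1}{\sqrt{2\pi\sigma^2}}$ on all of $\R$ (or grows at infinity in the strict case), and hence the integral diverges.

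There is no real obstacle here; the only point requiring care is making sure the integral is set up with the correct sign conventions before invoking convergence of the Gaussian. The entire argument is a one-line computation after combining the exponentials.
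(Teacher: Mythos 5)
Your computation is correct: combining the exponents gives the coefficient $\frac{\lambda^2-2\sigma^2}{2\sigma^2\lambda^2}$, and the case split on its sign correctly yields the value $\sqrt{\lambda^2/(\lambda^2-2\sigma^2)}$ when $\lambda^2 > 2\sigma^2$ and divergence otherwise (including the boundary case $\lambda^2 = 2\sigma^2$, where the integrand is a positive constant). The paper states this lemma as a well-known identity without proof, and your direct Gaussian-integral argument is exactly the standard way to verify it, so there is nothing to reconcile.
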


We now return to the example. 
A calculation gives $\mu P_T = \cN(0, \exp(-2\alpha T)\, \sig^2 + \frac{1-\exp(-2\alpha T)}{\alpha})$ and $\nu P_T = \cN(0, \frac{1-\exp(-2\alpha T)}{\alpha})$. Thus by Lemma~\ref{lem:renyi-gaussians-full}, $\Ren_q(\mu P_T \mmid \nu P_T) $ is finite if and only if $T > T_0  \deq  \frac{1}{2\alpha} \log(1 + (q-1)\,\alpha\sig^2)$. Now by Lemma~\ref{lem:mgf-chi}, the regularity upper bound~\eqref{eq:langevin-regularity-refined-1}
is again finite if and only if $T > T_0$. In contrast, the unrefined bound in Corollary~\ref{cor:langevin-cont} does not tightly capture the threshold since by  Lemma~\ref{lem:mgf-chi}, it is finite if and only if $T >\tfrac{1}{2\alpha} \log(1 + q\,(q-1)\, \alpha \sig^2)$.

\subsection{R\'enyi divergence bounds via continuous-time arguments}\label{app:renyi}

\subsubsection{Synchronous coupling}

Here, we extend the computation in \S\ref{sec:cont:sync} to R\'enyi divergences $\Ren_q$ for orders $q > 1$.
Recall from \S\ref{sec:cont:sync} the definition of the path measures $\PathA_T$, $\PathAux_T$, and that
\begin{align*}
    \mc E_T
    &= \exp\bigl(M_T - \frac{1}{2}\,{[M,M]}_T\bigr)\,, \qquad M_t \deq -\frac{1}{\sqrt 2}\int_0^t \eta_s\,\langle X_s-Y_s,\D B_s'\rangle\,.
\end{align*}
Recalling Remark~\ref{rmk:sync_cont_technical}, it suffices to bound $\E_{\PathAux_T}[\mc E_T^q]$.
From It\^o's formula, for any local martingale $\widetilde M$, the exponential $\exp(\widetilde M - \frac{1}{2}\,[\widetilde M, \widetilde M])$ is also a local martingale.
Applying this to $\widetilde M \deq qM$, we deduce that $\exp(qM - \frac{q^2}{2} \,[M,M])$ is a non-negative local martingale and hence a supermartingale.
From the almost sure bound~\eqref{eq:sync_cont_dist},
\begin{align*}
    \E_{\PathAux_T}[\mc E_T^q]
    &= \E_{\PathAux_T}\exp\Bigl(qM_T - \frac{q^2}{2}\,{[M,M]}_T + \frac{q\,(q-1)}{2}\,{[M,M]}_T \Bigr) \\
    &= \E_{\PathAux_T}\exp\Bigl(qM_T - \frac{q^2}{2}\,{[M,M]}_T + \frac{q\,(q-1)}{4} \int_0^T \eta_t^2\,\norm{X_t-Y_t}^2 \, \D t \Bigr) \\
    &\le \exp\Bigl(\frac{q\,(q-1)}{4}\int_0^T \eta_t^2 \exp\bigl(-2\alpha t - 2\int_0^t \eta_s \, \D s\bigr)\,\D t\Bigr) \,\E_{\PathAux_T}\exp\bigl(qM_T - \frac{q^2}{2}\,{[M,M]}_T\bigr) \\
    &\le \exp\Bigl(\frac{q\,(q-1)}{4}\int_0^T \eta_t^2 \exp\bigl(-2\alpha t - 2\int_0^t \eta_s \, \D s\bigr)\,\D t\Bigr)\,,
\end{align*}
where the last line follows from the supermartingale property.
Hence,
\begin{align*}
    \Ren_q(\delta_x P_T \mmid \delta_y P_T)
    &\le \frac{1}{q-1} \log \E_{\PathAux_T}[\mc E_T^q]
    \le \frac{q}{4} \int_0^T \eta_t^2 \exp\Bigl(-2\alpha t - \int_0^t \eta_s \, \D s\Bigr)\,\D t\,.
\end{align*}
Note that the problem of choosing ${(\eta_t)}_{t\in [0,T]}$ to minimize this expression leads to the same calculus of variations problem as the one we encountered in \S\ref{sec:cont:sync} (and solved in \S\ref{app:calc}).
It yields
\begin{align*}
    \Ren_q(\delta_x P_t \mmid \delta_y P_T)
    &\le \frac{\alpha q\,\norm{x-y}^2}{2\,(\exp(2\alpha T) - 1)}\,.
\end{align*}

\subsubsection{Wasserstein coupling}

Here, we extend the computation in \S\ref{sec:cont:opt} to R\'enyi divergences $\Ren_q$ for orders $q > 1$. First, we require the following lemma about the derivative in time of a divergence along diffusions. Similar arguments have appeared previously in~\cite{VempalaW19} and~\cite[Lemma 12]{chenetal2022proximalsampler}; we give a proof for completeness.

\begin{lemma}
	Let $\psi : \R_+\to\R_+$ be twice continuously differentiable on $(0,\infty)$.
	Consider the associated divergence
	\begin{align*}
		\msf D_\psi(\mu \mmid \nu)
		&\deq \int \psi\bigl( \frac{\D\mu}{\D\nu}\bigr)\,\D\nu\,.
	\end{align*}
    Suppose that ${(\mu_t)}_{t\ge 0}$, ${(\nu_t)}_{t\ge 0}$ are positive and smooth densities evolving according to the equations
	\begin{align*}
		\partial_t \mu_t
		&= \divergence(\mu_t a_t) + c\, \Delta \mu_t\,, \\
		\partial_t \nu_t
		&= \divergence(\nu_t b_t) + c\,\Delta \nu_t\,.
	\end{align*}
    Here, ${(a_t)}_{t\ge 0}$ and ${(b_t)}_{t\ge 0}$ are families of vector fields on $\R^d$ and $c > 0$.
    Then, it holds that
	\begin{align*}
		\partial_t \msf D_\psi(\mu_t \mmid \nu_t)
		&= -\E_{\mu_t}\bigl\langle \nabla \bigl(\psi' \circ \frac{\mu_t}{\nu_t}\bigr),\; c\,\nabla \log \frac{\mu_t}{\nu_t} + a_t - b_t\bigr\rangle\,.
	\end{align*}
\end{lemma}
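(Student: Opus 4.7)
The plan is to carry out a direct computation: differentiate under the integral, apply the Fokker--Planck-type equations to $\partial_t\mu_t$ and $\partial_t\nu_t$, integrate by parts, and simplify using a chain-rule identity. Write $r_t \deq \mu_t/\nu_t$ throughout. The key algebraic observation is that
\begin{align*}
    \nabla\bigl[\psi(r_t) - r_t\,\psi'(r_t)\bigr] = -r_t\,\psi''(r_t)\,\nabla r_t = -r_t\,\nabla(\psi' \circ r_t)\,,
\end{align*}
which makes the cross-terms combine as expected after integration by parts.

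First, I would write $\msf D_\psi(\mu_t\mmid \nu_t) = \int \psi(r_t)\,\nu_t$ and differentiate. Using $\partial_t r_t = \partial_t \mu_t/\nu_t - r_t\,\partial_t\nu_t/\nu_t$, the product rule gives
\begin{align*}
    \partial_t \msf D_\psi(\mu_t \mmid \nu_t)
    &= \int \psi'(r_t)\,\partial_t \mu_t + \int \bigl[\psi(r_t) - r_t\,\psi'(r_t)\bigr]\,\partial_t \nu_t\,,
\end{align*}
where the integrals are over $\R^d$ (and all boundary terms below vanish by the smoothness and decay assumed on $\mu_t,\nu_t$, together with positivity ensuring $r_t$ is well-defined).

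Next, I would substitute the evolution equations $\partial_t\mu_t = \divergence(\mu_t a_t) + c\,\Delta\mu_t$ and $\partial_t\nu_t = \divergence(\nu_t b_t) + c\,\Delta\nu_t$ and integrate each term by parts once. Applying the chain-rule identity above, the second piece becomes
\begin{align*}
    \int \bigl[\psi(r_t) - r_t\,\psi'(r_t)\bigr]\,\partial_t\nu_t
    &= \int r_t\,\bigl\langle\nabla(\psi'\circ r_t),\, \nu_t b_t + c\,\nabla\nu_t\bigr\rangle\,,
\end{align*}
while the first piece gives $-\int \langle \nabla(\psi'\circ r_t), \mu_t a_t + c\,\nabla\mu_t\rangle$. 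Combining via $r_t\nu_t = \mu_t$, the drift terms merge into $\int \mu_t\,\langle\nabla(\psi'\circ r_t), b_t - a_t\rangle$, and the diffusion terms merge into $-c\int \langle\nabla(\psi'\circ r_t), \nabla\mu_t - r_t\,\nabla\nu_t\rangle = -c\int \mu_t\,\langle\nabla(\psi'\circ r_t), \nabla\log r_t\rangle$, using $\nabla\mu_t - r_t\,\nabla\nu_t = \nu_t\,\nabla r_t = \mu_t\,\nabla\log r_t$. Packaging these two terms as an $\E_{\mu_t}$ yields exactly the claimed formula.

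There is no real obstacle: the proof is a sequence of routine manipulations. The only thing to be a little careful about is the chain-rule identity on $\psi(r) - r\,\psi'(r)$, which is what makes the two integrations by parts produce the combination $c\,\nabla\log r_t + a_t - b_t$ inside a single inner product. Smoothness of $\psi$ on $(0,\infty)$ and positivity of $\mu_t,\nu_t$ ensure that the computation is justified pointwise; the smoothness/decay of the densities justifies discarding the boundary terms at infinity.
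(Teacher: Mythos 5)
Your proof is correct and follows essentially the same route as the paper: differentiate under the integral, substitute the two Fokker--Planck equations, integrate by parts, and simplify via the chain rule, with your identity $\nabla[\psi(r)-r\,\psi'(r)]=-r\,\nabla(\psi'\circ r)$ playing the role of the cancellation the paper records in its underbraced term.
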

\begin{proof}
    Using the evolution equations and integration by parts,
    \begin{align*}
        \partial_t \msf D_\psi(\mu_t \mmid \nu_t)
        &= \int \psi'\bigl( \frac{\mu_t}{\nu_t}\bigr) \,\bigl( \partial_t \mu_t - \frac{\mu_t\,\partial_t \nu_t}{\nu_t}\bigr) + \int \psi\bigl( \frac{\mu_t}{\nu_t}\bigr)\,\partial_t \nu_t \\
        &= -\int \bigl\langle \nabla\bigl[ \psi'\bigl(\frac{\mu_t}{\nu_t}\bigr)\bigr], a_t + c\,\nabla \log \mu_t - b_t - c\,\nabla \log \nu_t \bigr\rangle \, \mu_t \\
        &\qquad{} + {\underbrace{\int \psi'\bigl(\frac{\mu_t}{\nu_t}\bigr)\,\bigl\langle \nabla \frac{\mu_t}{\nu_t}, b_t + c\,\nabla \log \nu_t\bigr\rangle\,\nu_t -\int \bigl\langle \nabla\bigl[\psi\bigl( \frac{\mu_t}{\nu_t}\bigr)], b_t + c\,\nabla \log \nu_t\bigr\rangle \, \nu_t}_{=0}}\,. \qedhere
    \end{align*}
\end{proof}

Consider $\psi = {(\cdot)}^q -1$ for which $\msf D_\psi = \Hell_q$.
Let ${(\mu_t)}_{t\ge 0}$, ${(\nu_t)}_{t\ge 0}$ denote the marginal laws of Langevin diffusions with $\alpha$-convex and smooth potential $V$, and let ${(\mu_t')}_{t\ge 0}$ to be the surrogate process with
\begin{align}\label{eq:cont_ot_aux_renyi}
    \partial_t \mu_t' = \divergence\bigl(\mu_t'\,(\nabla V - \eta_t\,(T_t - {\id}))\bigr) + \Delta \mu_t'\,, \qquad \mu_0' = \nu_0\,.
\end{align}
Here, we take $T_t$ to be a transport map from $\mu_t'$ to $\mu_t$.
Applying the simultaneous diffusion lemma with $\mu_t'$ replacing $\mu_t$ (and ignoring issues of regularity),
\begin{align*}
	\partial_t \Hell_q(\mu_t' \mmid \nu_t)
	&= -q\,\E_{\mu_t'}\bigl\langle \nabla\bigl[\bigl( \frac{\mu_t'}{\nu_t}\bigr){\bigsp}^{q-1}\bigr], \nabla \log \frac{\mu_t'}{\nu_t} - \eta_t\,(T_t - {\id})\bigr\rangle\,.
\end{align*}
We now massage this expression via the chain rule:
\begin{align*}
	\E_{\mu_t'}\bigl\langle \nabla\bigl[\bigl( \frac{\mu_t'}{\nu_t}\bigr){\bigsp}^{q-1}\bigr], \nabla \log \frac{\mu_t'}{\nu_t}\bigr\rangle
	&= (q-1)\,\E_{\mu_t'}\bigl[ \bigl( \frac{\mu_t'}{\nu_t}\bigr){\bigsp}^{q-1}\,\bigl\lVert \nabla \log \frac{\mu_t'}{\nu_t}\bigr\rVert^2\bigr]
\end{align*}
and
\begin{align*}
	&\E_{\mu_t'}\bigl\langle\nabla \bigl[\bigl( \frac{\mu_t'}{\nu_t}\bigr){\bigsp}^{q-1}\bigr], \eta_t\,(T_t-{\id})\bigr\rangle
	= (q-1)\, \E_{\mu_t'}\bigl[ \bigl( \frac{\mu_t'}{\nu_t}\bigr){\bigsp}^{q-1}\, \bigl\langle \nabla \log \frac{\mu_t'}{\nu_t}, \eta_t\,(T_t-{\id})\bigr\rangle\bigr] \\
	&\qquad \le (q-1)\,\Bigl\{ \E_{\mu_t'}\bigl[ \bigl( \frac{\mu_t'}{\nu_t}\bigr){\bigsp}^{q-1}\,\bigl\lVert \nabla \log \frac{\mu_t'}{\nu_t}\bigr\rVert^2\bigr] + \frac{\eta_t^2}{4}\,\E_{\mu_t'}\bigl[ \bigl( \frac{\mu_t'}{\nu_t}\bigr){\bigsp}^{q-1}\,\norm{T_t - {\id}}^2\bigr]\Bigr\}\,,
\end{align*}
hence
\begin{align*}
	\partial_t \Hell_q(\mu_t' \mmid \nu_t)
	&\le \frac{\eta_t^2\, q\,(q-1)}{4}\, \E_{\mu_t'}\bigl[\bigl( \frac{\mu_t'}{\nu_t}\bigr){\bigsp}^{q-1}\,\norm{T_t - {\id}}^2\bigr]\,.
\end{align*}
Now, differentiating the R\'enyi divergence via the chain rule, noting that $\Ren_q = \frac{1}{q-1}\log(1+\Hell_q)$,
\begin{align}\label{eq:cont_ot_renyi}
	\partial_t \Ren_q(\mu_t' \mmid \nu_t)
	&\le \frac{\eta_t^2 q}{4} \, \frac{\E_{\mu_t'}[(\frac{\mu_t'}{\nu_t}){\bigsp}^{q-1}\,\norm{T_t - {\id}}^2]}{\E_{\mu_t'}[(\frac{\mu_t'}{\nu_t}){\bigsp}^{q-1}]}
	\le \frac{\eta_t^2 q}{4} \,\norm{T_t - {\id}}_{L^\infty(\mu_t')}^2\,.
\end{align}
If we choose $T_t$ to be an optimal transport map for the $W_\infty$ metric (c.f.~\cite[\S 3.2]{San15OT}), the right-hand side is bounded by $\frac{\eta_t^2 q}{4}\,W_\infty^2(\mu_t',\nu_t)$.
On the other hand, we might still expect that
\begin{align*}
	\partial_t^+ W_\infty(\mu_t,\mu_t')
	&\le -(\alpha+\eta_t)\,W_\infty(\mu_t,\mu_t')\,,
\end{align*}
where $\partial_t^+$ denotes the upper derivative, and thus
\begin{align}\label{eq:cont_ot_w_inf}
    W_\infty(\mu_t,\mu_t')
    &\le \exp\Bigl(-\alpha t-\int_0^t \eta_s\,\D s\Bigr)\,W_\infty(\mu_0,\nu_0)\,.
\end{align}
Taking this as a given, we show how to conclude the proof.
From~\eqref{eq:cont_ot_renyi} and~\eqref{eq:cont_ot_w_inf},
\begin{align*}
    \Ren_q(\mu_T \mmid \nu_T)
    &= \Ren_q(\mu_T' \mmid \nu_T)
    \le \frac{q\,W_\infty^2(\mu_0,\nu_0)}{4} \int_0^T \eta_t^2 \exp\Bigl(-2\alpha t - \int_0^t \eta_s \, \D s\Bigr) \, \D t
    = \frac{\alpha q\,W_\infty^2(\mu_0,\nu_0)}{2\,(\exp(2\alpha T) - 1)}
\end{align*}
provided that we use the choice of ${(\eta_t)}_{t\in [0,T]}$ derived in \S\ref{app:calc}.

Unfortunately, there are several technical hurdles associated with making these computations rigorous (e.g., justifying~\eqref{eq:cont_ot_w_inf}).
In our view, the simplest way to sidestep these issues is to discretize~\eqref{eq:cont_ot_aux_renyi} via a splitting scheme for the two parts of the dynamics, i.e., the Langevin dynamics and the transport, which leads to the discrete-time approach discussed in \S\ref{sec:disc}.
Hence, we simply refer to the argument therein.

\subsection{Reverse Harnack inequalities via semigroup methods}\label{app:semigp_reverse}

In this section, let ${(P_t)}_{t\ge 0}$ denote the Markov semigroup for a diffusion process on a complete Riemannian manifold $\cM$, with infinitesimal generator $\ms L = \Delta - \langle \nabla V, \nabla \cdot\rangle$, and assume that the curvature-dimension condition $\CD(\alpha,\infty)$ holds.
We recall that this is equivalent to $\Ric + \nabla^2 V \succeq \alpha$.

Let $f$ be a positive function and let ${(x_s)}_{s\in [0,t]}$ denote a curve on $\cM$ with $x_0 = x$ and $x_t = y$.
Let $\phi : \R_{>0} \to \R_{>0}$ be a strictly convex function.
Here, we observe that the semigroup proof of~\cite{Wang1997LSINoncompact}, which we reproduce below, holds verbatim for negative exponents $p < 0$.
Differentiating along the semigroup interpolation,
\begin{align*}
    \partial_s P_s[\phi(P_{t-s} f)](x_s)
    &= P_s[\ms L\phi(P_{t-s} f) - \phi'(P_{t-s} f)\,\ms LP_{t-s} f](x_s) + \langle \nabla [P_s \phi(P_{t-s} f)](x_s), \dot x_s\rangle \\
    &\ge P_s[\phi''(P_{t-s} f)\,\norm{\nabla P_{t-s} f}^2](x_s)- \norm{\nabla [P_s \phi(P_{t-s} f)](x_s)}\,\norm{\dot x_s}\,,
\end{align*}
where we used the diffusion chain rule
\begin{align}\label{eq:diffusion_chain_rule}
    \ms L \phi(f) = \phi'(f) \, \ms L f + \phi''(f) \,\norm{\nabla f}^2\,.
\end{align}
Also, it is well-known that $\CD(\alpha,\infty)$ entails the gradient bound $\norm{\nabla P_s f} \le \exp(-\alpha s)\,P_s\norm{\nabla f}$ for all $s \ge 0$, see Theorem~\ref{thm:equiv_cd}.
Applying this, we obtain
\begin{align*}
    \partial_s P_s[\phi(P_{t-s} f)](x_s)
    &\ge P_s\bigl[\phi''(P_{t-s} f)\,\norm{\nabla P_{t-s} f}^2 - \norm{\dot x_s} \exp(-\alpha s)\,\norm{\nabla \phi(P_{t-s} f)}\bigr](x_s) \\
    &= P_s\bigl[\phi''(P_{t-s} f)\,\norm{\nabla P_{t-s} f}^2 - \norm{\dot x_s} \exp(-\alpha s)\,\abs{\phi'(P_{t-s} f)}\,\norm{\nabla P_{t-s} f}\bigr](x_s) \\
    &\ge -\frac{\norm{\dot x_s}^2 \exp(-2\alpha s)}{4}\, P_s\Bigl(\frac{{\phi'(P_{t-s} f)}^2}{\phi''(P_{t-s} f)}\Bigr)(x_s)\,.
\end{align*}
We now specialize this computation to the case when $\phi(\cdot) = {(\cdot)}^p$, $p \in (-\infty, 0)$, which is strictly convex and positive on $\R_{>0}$, for which ${\phi'(z)}^2/\phi''(z) = \frac{p}{p-1}\,\phi(z)$.
Hence, by Gr\"onwall's inequality,
\begin{align*}
    P_t(f^p)(y)
    &\ge {(P_t f(x))}^p \exp\Bigl( -\frac{\abs p}{4\,\abs{p-1}}\int_0^t \norm{\dot x_s}^2 \exp(-2\alpha s) \, \D s \Bigr)\,.
\end{align*}
Finally, if $\gamma : [0,1] \to \cM$ denotes the constant-speed geodesic joining $x$ to $y$, we set
\begin{align*}
    x_s \deq \gamma\Bigl(\frac{\exp(2\alpha s)-1}{\exp(2\alpha t) - 1}\Bigr)\,, \qquad \norm{\dot x_s} = \frac{2\alpha \exp(2\alpha s)}{\exp(2\alpha t) - 1}\,\msf d(x,y)\,,
\end{align*}
which yields
\begin{align}\label{eq:reverse_harnack_app}
    P_t(f^p)(y)
    &\ge {(P_t f(x))}^p \exp\Bigl( -\frac{\alpha\,\abs p}{2\,\abs{p-1}\,(\exp(2\alpha t) - 1)}\,{\msf d(x,y)}^2\Bigr)\,.
\end{align}
Applying the decreasing function ${(\cdot)}^{1/p}$ to both sides of this inequality yields the reverse Harnack inequality with exponent $p\in(-\infty, 0)$.

We now argue that~\eqref{eq:reverse_harnack_app} implies back the $\CD(\alpha,\infty)$ condition; the proof is similar to the one for~\cite[Theorem 2.3.3]{Wang14Diffusion}.
Let $f : \cM \to \R$ be bounded, smooth, and constant outside of a compact set, with $f\ge C > 0$, $\nabla f(x) = v \in T_p\cM$, and $\nabla^2 f(x) = 0$.
For $t \ge 0$, let $x_t \deq \exp_x(ct\,\nabla \log f(x))$, where $c \in\R$ is to be chosen later.
Then,~\eqref{eq:reverse_harnack_app} readily implies
\begin{align}\label{eq:log_rev_harnack}
    \log P_t(f^p)(x)
    &\ge p\log P_t f(x_t) - \frac{\alpha c^2 p t^2}{2\,(p-1)\,(\exp(2\alpha t) - 1)}\,\norm{\nabla \log f(x)}^2\,.
\end{align}
On one hand,
\begin{align*}
    \partial_t \log P_t(f^p)(x)
    &= \frac{\ms LP_t(f^p)(x)}{P_t(f^p)(x)}\,,
\end{align*}
and hence, a tedious calculation using repeated applications of the diffusion chain rule~\eqref{eq:diffusion_chain_rule}, the product rule $\ms L(fg) = f\,\ms Lg + g\,\ms L f + 2\,\langle \nabla f, \nabla g\rangle$, and $\nabla^2 f(x) = 0$ yields
\begin{align*}
    \partial_t\big|_{t=0} \log P_t(f^p)(x)
    &= \frac{\ms L(f^p)(x)}{{f(x)}^p}
    = \frac{p\,\ms Lf(x)}{f(x)} + \frac{p\,(p-1)\,\norm{\nabla f(x)}^2}{{f(x)}^2}\,, \\
    \partial_t^2\big|_{t=0} \log P_t(f^p)(x)
    &= \frac{\ms L^2(f^p)(x)}{{f(x)}^p} - \frac{{\ms L(f^p)(x)}^2}{{f(x)}^{2p}} \\
    &= \frac{p\,\ms L^2 f(x)}{f(x)}-\frac{p\,{\ms Lf(x)}^2}{{f(x)}^2} + \frac{p\,(p-1)\,\ms L(\norm{\nabla f}^2)(x)}{{f(x)}^2} \\
    &\qquad{} + \frac{2p\,(p-1)\,\langle \nabla \ms Lf(x),\nabla f(x)\rangle}{{f(x)}^2} -\frac{4p\,(p-1)\,\ms Lf(x)\,\norm{\nabla f(x)}^2}{{f(x)}^3} \\
    &\qquad{}- \frac{2p\,(p-1)\,(2p-3)\,\norm{\nabla f(x)}^4}{{f(x)}^4}\,.
\end{align*}
On the other hand,
\begin{align*}
    \partial_t[p\log P_t f(x_t)]
    &= \frac{p\,[\ms LP_t f(x_t) + \langle \nabla P_t f(x_t), \dot x_t \rangle]}{P_t f(x_t)}
\end{align*}
and hence, another tedious calculation yields
\begin{align*}
    \partial_t\big|_{t=0} [p\log P_t f(x_t)]
    &= \frac{p\,\ms Lf(x)}{f(x)} + \frac{cp\,\norm{\nabla f(x)}^2}{{f(x)}^2}\,, \\
    \partial_t^2\big|_{t=0} [p\log P_t f(x_t)]
    &= \frac{p\,[\ms L^2 f(x) + 2c\,\langle \nabla \ms L f(x), \nabla \log f(x) \rangle]}{f(x)} - \frac{p\,{[\ms Lf(x) + c\,\norm{\nabla f(x)}^2/f(x)]}^2}{{f(x)}^2} \\
    &= \frac{p\,\ms L^2 f(x)}{f(x)} - \frac{p\,{\ms Lf(x)}^2}{{f(x)}^2} + \frac{2cp\,\langle \nabla \ms L f(x),\nabla f(x)\rangle}{{f(x)}^2} \\
    &\qquad{} - \frac{2cp\,\ms Lf(x)\,\norm{\nabla f(x)}^2}{{f(x)}^3}
    - \frac{c^2 p\,\norm{\nabla f(x)}^4}{{f(x)}^4}\,.
\end{align*}
We now set $c = 2\,(p-1)$, substitute these expansions into~\eqref{eq:log_rev_harnack}, and divide by $t^2$, obtaining
\begin{align*}
    &\frac{p\,(p-1)}{t}\,\Bigl( \frac{2\alpha t}{\exp(2\alpha t)-1} - 1\Bigr)\,\norm{\nabla \log f(x)}^2 \\
    &\qquad \ge \frac{1}{2}\,\Bigl(-\frac{p\,(p-1)\,[\ms L(\norm{\nabla f}^2)(x) - 2\,\langle \nabla \ms L f(x),\nabla f(x)\rangle]}{{f(x)}^2} - \frac{4p\,{(p-1)}^2\,\norm{\nabla f(x)}^4}{{f(x)}^4}\Bigr) - o(1)\,.
\end{align*}
Sending $t\searrow 0$ and using $\Gamma_2(f,f) = \frac{1}{2}\,(\ms L(\norm{\nabla f}^2) - 2\,\langle \nabla \ms Lf, \nabla f\rangle)$, we deduce that
\begin{align*}
    -\alpha p\,(p-1)\,\norm v^2
    &\ge -p\,(p-1)\,\Gamma_2(f,f)(x) - \frac{2p\,{(p-1)}^2\,\norm v^4}{{f(x)}^2}\,.
\end{align*}
The Bochner{--}Weitzenb\"{o}ck formula yields $\Gamma_2(f,f)(x) = \Ric_x(v,v) + \langle \nabla^2 V(x)\,v, v\rangle$, so that
\begin{align*}
    \Ric_x(v,v) + \langle \nabla^2 V(x) \,v, v\rangle
    &\ge \alpha\,\norm v^2 - \frac{2\,(p-1)\,\norm v^4}{{f(x)}^2}\,.
\end{align*}
Since $f \ge C$ and we can freely send $C \to\infty$, it follows that
\begin{align*}
    \Ric_x(v,v) + \langle \nabla^2 V(x)\,v, v\rangle \ge \alpha\,\norm v^2
\end{align*}
for all $x\in \cM$ and $v \in T_x \cM$, which is $\CD(\alpha,\infty)$.
  	\section{Dual proofs}\label{app:dual}

In order to emphasize the duality between Harnack inequalities and reverse transport inequalities discussed in \S\ref{ssec:apps:background}, in this section we record dual versions of the proofs. Namely, if a fact was established for Harnack inequalities, then here we prove via direct means the corresponding fact for reverse transport inequalities, and vice versa.

\subsection{Distributional Harnack inequalities}\label{app:dist_harnack}

Here we show how to obtain the distributional Harnack inequalities~\eqref{eq:dist_power_harnack} and~\eqref{eq:dist_log_harnack} from standard Harnack inequalities without dualizing (to obtain a reverse transport inequality), appealing to the convexity principle, and dualizing back as described in Remark~\ref{rem:distributional-harnack}. For a function $f > 0$, suppose
\begin{align}\label{eq:general_power_harnack}
    Pf(x) \le C_p(x,y)\, {P(f^p)(y)}^{1/p}\,, \qquad \text{for all}~x,y\in\cX\,.
\end{align}
Integrating this inequality w.r.t.\ $\gamma(\D x, \D y)$, where $\gamma \in \Coup(\mu,\nu)$, and applying H\"older's inequality,
\begin{align*}
    \norm f_{L^1(\mu P)} \le \int C_p(x,y)\, {P(f^p)(y)}^{1/p}\, \gamma(\D x,\D y)
    \le \norm f_{L^p(\nu P)}\,\norm{C_p}_{L^{p/(p-1)}(\gamma)}\,.
\end{align*}
This yields~\eqref{eq:dist_power_harnack}.
The proof for~\eqref{eq:dist_log_harnack} is similar, using Jensen's inequality in lieu of H\"older.

We can refine this inequality as follows. If we integrate over $x$ first, then
\begin{align*}
    \norm f_{L^1(\mu P)}
    &\le \int \Bigl[\int C_p(x,y)\,\gamma_{1\mid 2}(\D x \mid y)\Bigr] \, {P(f^p)(y)}^{1/p}\,\nu(\D y) \\
    &\le \norm f_{L^p(\nu P)}\,\Bigl\{\int \Bigl[\int C_p(x,y) \,\gamma_{1\mid 2}(\D x \mid y)\Bigr]^{p/(p-1)} \, \nu(\D y)\Bigr\}^{(p-1)/p}\,.
\end{align*}
This is equivalent to the first refined R\'enyi bound~\eqref{eq:refined_1} via the duality in \S\ref{ssec:apps:duality}.

To dualize~\eqref{eq:refined_2}, we take the logarithm of~\eqref{eq:general_power_harnack} to obtain
\begin{align*}
    \log Pf(x)
    &\le \frac{1}{p} \log P(f^p)(y) + \log C_p(x,y)\,.
\end{align*}
We integrate w.r.t.\ $\gamma_{2\mid 1}(\D y \mid x)$ and exponentiate to obtain
\begin{align*}
    Pf(x)
    &\le \exp \int\bigl(\frac{1}{p} \log P(f^p)(y) + \log C_p(x,y)\bigr)\,\gamma_{2\mid 1}(\D y \mid x)\,.
\end{align*}
Integrating w.r.t.\ $\mu(\D x)$ and applying H\"older's and Jensen's inequalities,
\begin{align*}
    \norm f_{L^1(\mu P)}
    &\le \Bigl[ \int \exp\Bigl(\int \log P(f^p)(y)\,\gamma_{2\mid 1}(\D y \mid x)\Bigr)\,\mu(\D x) \Bigr]^{1/p} \\
    &\qquad{} \times \Bigl[\int \exp\Bigl( \frac{p}{p-1}\int \log C_p(x,y)\,\gamma_{2\mid 1}(\D y \mid x)\Bigr)\,\mu(\D x) \Bigr]^{(p-1)/p} \\
    &\le \norm f_{L^p(\nu P)}\,\Bigl[\int \exp\Bigl( \frac{p}{p-1}\int \log C_p(x,y)\,\gamma_{2\mid 1}(\D y \mid x)\Bigr)\,\mu(\D x) \Bigr]^{(p-1)/p}\,.
\end{align*}
This is equivalent to the second refined R\'enyi bound~\eqref{eq:refined_2} via the duality in \S\ref{ssec:apps:duality}.

\subsection{Composition of reverse transport inequalities}\label{app:renyi_implies_kl}

In~\cite{Wang10HarnackBoundary}, F.-Y.\ Wang proved that if the power Harnack inequality~\eqref{eq:power_harnack} holds for exponents $p_0, p_1 > 1$, then it also holds for exponent $p_0 p_1$.
Here, we state and prove the dual version of this statement, which shows in particular that any R\'enyi reverse transport inequality of order $q > 1$ implies the corresponding sharp KL reverse transport inequality.
Our proof is based on the \emph{weak triangle inequality} for R\'enyi divergences, which in turn follows from H\"older's inequality (see e.g.,~\cite[Proposition 11]{mironov2017renyi}).

\begin{lemma}[Weak triangle inequality for R\'enyi divergence]
    For any $q > 1$, any $\lambda \in [0,1]$, and any probability measures $\mu$, $\nu$, and $\xi$,
    \begin{align*}
        \Ren_q(\mu \mmid \nu)
        &\le \frac{q-\lambda}{q-1} \,\Ren_{q/\lambda}(\mu \mmid \xi) + \Ren_{(q-\lambda)/(1-\lambda)}(\xi \mmid \nu)\,.
    \end{align*}
\end{lemma}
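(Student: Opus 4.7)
The plan is to derive the inequality directly from H\"older's inequality by factoring $\D\mu/\D\nu$ through the intermediate measure $\xi$. First I would reduce to the case $\mu \ll \xi \ll \nu$, since otherwise at least one of the R\'enyi divergences on the right-hand side equals $+\infty$ (both orders $q/\lambda$ and $(q-\lambda)/(1-\lambda)$ exceed $1$) and the bound is trivial. Under absolute continuity, the chain rule rewrites the integrand defining $\Ren_q(\mu \mmid \nu)$ as $(\D\mu/\D\xi)^q\,(\D\xi/\D\nu)^{q-1}$ against $\xi$.

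Next I would apply H\"older's inequality with conjugate exponents $1/\lambda$ and $1/(1-\lambda)$, assigning the factor $(\D\mu/\D\xi)^q$ to the former and $(\D\xi/\D\nu)^{q-1}$ to the latter. This yields the product of $\bigl[\int (\D\mu/\D\xi)^{q/\lambda}\,\D\xi\bigr]^{\lambda}$ and $\bigl[\int (\D\xi/\D\nu)^{(q-1)/(1-\lambda)}\,\D\xi\bigr]^{1-\lambda}$. The first bracket equals $\exp\bigl((q-\lambda)\,\Ren_{q/\lambda}(\mu \mmid \xi)\bigr)$ by definition of R\'enyi divergence (using $\lambda\,(q/\lambda-1) = q-\lambda$). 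For the second, I would pull out a factor $\D\xi/\D\nu$ to switch the base measure from $\xi$ to $\nu$, rewriting the inner integral as $\int (\D\xi/\D\nu)^{(q-\lambda)/(1-\lambda)}\,\D\nu$, whose outer $(1-\lambda)$-th power equals $\exp\bigl((q-1)\,\Ren_{(q-\lambda)/(1-\lambda)}(\xi \mmid \nu)\bigr)$ after combining with the exponent $(q-1)/(1-\lambda)$ arising from the R\'enyi definition.

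Applying $\tfrac{1}{q-1}\log$ to the resulting bound then yields exactly the claimed inequality. The only substantive choice in the argument is the pair of H\"older exponents, which is essentially forced by the requirement that the first integral produce the R\'enyi divergence of order $q/\lambda$ between $\mu$ and $\xi$; everything else is routine algebra with the exponents. I do not anticipate serious obstacles, aside from handling the boundary cases $\lambda \in \{0, 1\}$ (where one R\'enyi order becomes $\infty$), which can be treated either directly via the $L^\infty$-essential supremum form of $\Ren_\infty$ or by taking limits in $\lambda \in (0,1)$ and using lower semicontinuity.
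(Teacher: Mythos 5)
Your proposal is correct: factoring $\frac{\D\mu}{\D\nu}$ through $\xi$, integrating against $\xi$, and applying H\"older with exponents $1/\lambda$ and $1/(1-\lambda)$ gives exactly the claimed bound after taking $\tfrac{1}{q-1}\log$, and your exponent bookkeeping ($\lambda\,(q/\lambda-1)=q-\lambda$ and $(q-1)/(1-\lambda)+1=(q-\lambda)/(1-\lambda)$) checks out, as does the reduction to $\mu\ll\xi\ll\nu$ since both right-hand orders exceed $1$. This is the same H\"older-based argument the paper relies on (it simply cites~\cite[Proposition 11]{mironov2017renyi} rather than reproducing it), so no further comparison is needed.
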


\begin{lemma}
    Let $P$ be a Markov kernel on a geodesic space $(\cX, \msf d)$.
    Consider the following reverse transport inequality:
    \begin{align}\label{eq:general_rev_transport}
        \Ren_q(\delta_x P \mmid \delta_y P) \le Cq\,{\msf d(x,y)}^2\,, \qquad \text{for all}~x,y\in\cX\,.
    \end{align}
    If~\eqref{eq:general_rev_transport} holds for $q \in \{q_0, q_1\}$, where $q_0, q_1 > 1$, then it also holds for $q= q_0 q_1/(q_0+q_1-1)$.
\end{lemma}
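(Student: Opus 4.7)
The plan is to apply the weak triangle inequality with an auxiliary measure $\xi = \delta_z P$, where $z$ is a suitably chosen intermediate point on a geodesic from $x$ to $y$, and then to choose the parameters $\lambda$ and the position of $z$ so that everything matches up optimally. Given the target index $q = q_0 q_1/(q_0+q_1-1)$, I first solve for $\lambda$ so that the two indices appearing on the right-hand side of the weak triangle inequality become exactly $q_0$ and $q_1$: the condition $q/\lambda = q_0$ forces $\lambda = q/q_0$, and a direct computation shows that with this choice one automatically has $(q-\lambda)/(1-\lambda) = q_1$, using the defining identity $q(q_0+q_1-1) = q_0 q_1$.

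With these indices in place, the weak triangle inequality gives
\begin{align*}
\Ren_q(\delta_x P \mmid \delta_y P)
&\le \frac{q\,(q_0-1)}{q_0\,(q-1)}\,\Ren_{q_0}(\delta_x P \mmid \delta_z P) + \Ren_{q_1}(\delta_z P \mmid \delta_y P)\,,
\end{align*}
since $(q-\lambda)/(q-1) = q\,(q_0-1)/(q_0\,(q-1))$. Next I apply the hypothesis~\eqref{eq:general_rev_transport} at orders $q_0$ and $q_1$ to bound each R\'enyi divergence on the right by a constant times a squared distance, yielding
\begin{align*}
\Ren_q(\delta_x P \mmid \delta_y P) &\le \frac{Cq\,(q_0-1)}{q-1}\,{\msf d(x,z)}^2 + Cq_1\,{\msf d(z,y)}^2\,.
\end{align*}

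Now I parametrize $z = \gamma(t)$ along a unit-speed geodesic joining $x$ to $y$, so $\msf d(x,z) = t\,\msf d(x,y)$ and $\msf d(z,y) = (1-t)\,\msf d(x,y)$, and minimize the quadratic $\tfrac{q\,(q_0-1)}{q-1}\,t^2 + q_1\,(1-t)^2$ in $t \in [0,1]$. Setting the derivative to zero gives $t = (q-1)/(q_0-1)$ (which indeed lies in $[0,1]$ since $1 < q < q_0$; this last inequality follows from $q_1 > 1$ and the definition of $q$). Substituting back, the minimum value equals $q\,(q-1)(q_0-1)/(q_0-1)^2 + q_1\,(q_0-q)^2/(q_0-1)^2$, and a short algebraic verification reduces this to $q$ using once again the identity $q_1\,(q_0-q) = q\,(q_0-1)$, which is just another rearrangement of $q\,(q_0+q_1-1) = q_0 q_1$. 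Therefore
\begin{align*}
\Ren_q(\delta_x P \mmid \delta_y P) &\le Cq\,{\msf d(x,y)}^2\,,
\end{align*}
which is exactly~\eqref{eq:general_rev_transport} at the composed order.

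The only real step of substance is the bookkeeping with the parameters: the key point is that the defining relation for $q$ is precisely what makes both the matching of indices and the optimization of $z$ along the geodesic fall through cleanly. I do not anticipate a serious obstacle beyond verifying the identity $q_1\,(q_0-q) = q\,(q_0-1)$, which appears twice and is equivalent to the stated formula for $q$; no topological or analytic subtleties arise provided that $(\cX,\msf d)$ admits a geodesic between $x$ and $y$, which is given.
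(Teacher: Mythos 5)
Your proposal is correct and follows essentially the same route as the paper: apply the weak triangle inequality with $\xi = \delta_z P$ for $z$ on a geodesic, choose $\lambda = q/q_0$ so the two orders become $q_0$ and $q_1$, and pick the geodesic parameter $t = (q-1)/(q_0-1) = (q_1-1)/(q_0+q_1-1)$ (the paper states this value directly, while you derive it by optimizing the quadratic, which is equivalent). The only quibble is a harmless slip: the bound $q < q_0$ follows from $q_0 > 1$ (not from $q_1 > 1$), but since both exceed $1$ by hypothesis this does not affect the argument.
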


In particular, if~\eqref{eq:general_rev_transport} holds for some order $q > 1$, then it also holds for order $q^2/(2q-1) < q$.
Iterating this, it follows that it holds for $q=1$ (the KL reverse transport inequality).

\begin{proof}
    Let $q \deq q_0 q_1/(q_0 +q_1-1)$.
    Let $\lambda = \frac{q}{q_0} = \frac{q_1}{q_0+q_1-1}$, so that $1-\lambda = \frac{q_0-1}{q_0+q_1-1}$.
    The weak triangle inequality and~\eqref{eq:general_rev_transport} imply
    \begin{align*}
        \Ren_q(\delta_x P \mmid \delta_y P)
        &\le \frac{q_1}{q_1-1} \,\Ren_{q_0}(\delta_x P \mmid \delta_z P) + \Ren_{q_1}(\delta_z P \mmid \delta_y P)
        \le Cq_1\,\Bigl( \frac{q_0}{q_1-1}\,{\msf d(x,z)}^2 + {\msf d(z,y)}^2\Bigr)\,.
    \end{align*}
    Since $(\cX, \msf d)$ is a geodesic space, we can choose $z$ such that $\msf d(x,z) = t\,\msf d(x,y)$ and $\msf d(z,y) = (1-t)\,\msf d(x,y)$ where $t = \frac{q_1-1}{q_0+q_1-1} \in [0,1]$.
    With this choice, the right-hand side becomes $Cq\,{\msf d(x,y)}^2$.
\end{proof}

	\small
	\addcontentsline{toc}{section}{References}
        \printbibliography{}

\end{document}